\documentclass[11pt,reqno]{amsart}
\usepackage{eucal,fullpage,times,amsmath,amsthm,amssymb,mathrsfs,stmaryrd,enumerate,accents}
\usepackage[all]{xy}
\usepackage{url}

\usepackage[usenames]{color}
\usepackage{aliascnt} %To make autoref work as expected
\usepackage[colorlinks=true,linkcolor=blue,citecolor=blue,urlcolor=blue,citebordercolor={0 0 1},urlbordercolor={0 0 1},linkbordercolor={0 0 1},hyperindex]{hyperref} %needs to be loaded after most things

\newcommand{\calO}{{\mathcal{O}}}
\newcommand{\calA}{{\mathcal{A}}}
\newcommand{\calB}{{\mathcal{B}}}
\newcommand{\calC}{\mathcal{C}}

\newcommand{\Z}{\mathbf{Z}}
\newcommand{\G}{\mathbf{G}}

\newcommand{\A}{\mathbf{A}}

\renewcommand{\P}{\mathbf{P}}

\newcommand{\Spec}{{\mathrm{Spec}}}

\newcommand{\QAff}{\mathrm{QAff}}

\newcommand{\CAlg}{\mathrm{CAlg}}

\newcommand{\Hom}{\mathrm{Hom}}
\newcommand{\Map}{\mathrm{Map}}
\newcommand{\Sym}{\mathrm{Sym}}
\newcommand{\Fun}{\mathrm{Fun}}
\newcommand{\Func}{\mathrm{Fun}^c}

\newcommand{\Tor}{\mathrm{Tor}}
\newcommand{\QCoh}{\mathrm{QCoh}}
\newcommand{\Ind}{\mathrm{Ind}}
\newcommand{\perf}{\mathrm{perf}}
\newcommand{\Coh}{\mathrm{Coh}}

\newcommand{\id}{\mathrm{id}}
\newcommand{\coker}{\mathrm{coker}}

\renewcommand{\ker}{\mathrm{ker}}

\newcommand{\opp}{\mathrm{opp}}
\newcommand{\Op}{\mathrm{Op}}
\newcommand{\Sp}{\mathcal{S}}

\newcommand{\colim}{\mathop{\mathrm{colim}}}

\newcommand{\adjunction}[4]{\xymatrix@1{#1{\ } \ar@<0.3ex>[r]^{ {\scriptstyle #2}} & {\ } #3 \ar@<0.3ex>[l]^{ {\scriptstyle #4}}}}

%Dan's commands
\newcommand{\op}[1]{\!\!\mathop{\rm ~#1}\nolimits}
\newcommand{\Perf}{\op{Perf}}
\newcommand{\Bl}{\op{Bl}}

\newcommand{\APerf}{\op{APerf}}
\newcommand{\sod}[1]{\left\langle #1 \right\rangle}
\newcommand{\GL}{\op{GL}}
\newcommand{\inner}[1]{\underline{#1}}

%%
%Hack of \newtheorem that makes hyperref's \autoref work as expected.
%Warning: It has different syntax, taking what should be the 2nd arg
%to newtheorem in braces, not brackets.
% I'm copying this form my standard macros.tex -- 
%   I forget why I needed this, if it's a bug or expected!
% -Toly
%%

\newcommand{\refnewtheoremn}[4]{%
\newaliascnt{#1}{#2}
\newtheorem{#1}[#1]{#3}
\aliascntresetthe{#1}
\expandafter\providecommand\csname #1autorefname\endcsname{#4}}

\newcommand{\refnewtheorem}[3]{\refnewtheoremn{#1}{#2}{#3}{#3}}

\newtheorem{theorem}{Theorem}[section]
\refnewtheorem{proposition}{theorem}{Proposition}
\refnewtheorem{lemma}{theorem}{Lemma}
\refnewtheorem{corollary}{theorem}{Corollary}

\theoremstyle{definition}
\refnewtheorem{definition}{theorem}{Definition}
\refnewtheorem{question}{theorem}{Question}
\refnewtheorem{remark}{theorem}{Remark}
\refnewtheorem{example}{theorem}{Example}
\refnewtheorem{notation}{theorem}{Notation}
\refnewtheorem{convention}{theorem}{Convention}
\refnewtheorem{construction}{theorem}{Construction}
\refnewtheorem{claim}{theorem}{Claim}

\begin{document}

\title{Tannaka duality revisited}

\begin{abstract}
We establish several strengthened versions of Lurie's Tannaka duality theorem for certain classes of spectral algebraic stacks. Our most general version of Tannaka duality identifies maps between stacks with exact symmetric monoidal functors between $\infty$-categories of quasi-coherent complexes which preserve connective and pseudo-coherent complexes.
\end{abstract}

\maketitle

\section{Introduction}

\subsection{Background}

The goal of this paper is to investigate algebraic stacks through their associated categories of quasi-coherent sheaves (or, better, complexes). To put this investigation in context, note that an affine scheme is completely determined by its ring of functions. Using merely the ring of functions, though, it is difficult to move beyond affine schemes. However, if one is willing to work with a slightly richer object, one can go much further: any scheme is determined by its (abelian) category of quasi-coherent sheaves by the PhD thesis of Gabriel \cite{gabriel1962categories}. In the last five decades, numerous incarnations of this phenomena --- the encoding of geometric information about a scheme in a linear category associated to that scheme --- have emerged (see \cite{brandenburg2013rosenberg, calabrese2013moduli} for a recent discussion), and have inspired the creation of the subject of noncommutative geometry.

About a decade ago, Lurie introduced a new dimension to this story by adopting a ``relative'' perspective: instead of determining whether or not an isolated geometric object (such as a scheme or an algebraic stack) is determined by a linear category associated to it, he asked when {\em maps} between geometric objects can be reconstructed from the associated ``pullback'' functors on linear categories. More precisely, he showed \cite[Theorem 5.11]{classical_tannaka} that passing to the $\otimes$-category $\QCoh(-)$ of quasi-coherent sheaves is a lossless procedure, even for maps:

\begin{theorem}[Lurie]
\label{thm:LurieClassicalTD}
Let $X$ be a quasi-compact algebraic stack with affine diagonal, and let $S$ be any affine scheme\footnote{One may also take $S$ to be an arbitrary prestack without any gain in generality.}. Then the association $f \mapsto f^*$ gives a fully faithful embedding
\[ \Map(S,X) \to \Fun^L_{\otimes}(\QCoh(X),\QCoh(S)),\]
where the right hand side parametrizes colimit preserving $\otimes$-functors $\QCoh(X) \to \QCoh(S)$. Moreover, the essential image consists exactly of those functors that preserve flatness in a strong sense\footnote{More precisely, one needs the functor to preserve flat objects, as well as monomorphisms with flat cokernels.}.
\end{theorem}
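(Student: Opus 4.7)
The plan is to construct an inverse to $f \mapsto f^*$ by exploiting an affine atlas of $X$. Choose a smooth surjection $p: U \to X$ with $U = \Spec B$ affine; the affine-diagonal hypothesis guarantees that every iterated fiber product $U^n := U \times_X \cdots \times_X U$ is again an affine scheme, so the Cech nerve $U^\bullet$ is a simplicial affine scheme presenting $X$ as an fpqc quotient $|U^\bullet| \simeq X$. By smooth descent for quasi-coherent sheaves, $\QCoh(X) \simeq \lim_{[n] \in \Delta} \QCoh(U^n)$. Inside $\QCoh(X)$, the pushforward $\calB := p_* \calO_U$ is a faithfully flat commutative algebra with $U = \Spec_X \calB$, and analogously $U^n = \Spec_X \calB^{\otimes_{\calO_X} n}$; moreover the unit map $\calO_X \to \calB$ is (smooth-locally on $U$, and hence globally) a monomorphism with flat cokernel.

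Given a colimit-preserving tensor functor $F: \QCoh(X) \to \QCoh(S)$ satisfying the strong flatness-preservation hypothesis, set $\calA := F(\calB) \in \CAlg(\QCoh(S))$ and $V := \Spec_S \calA$. Preservation of flat objects gives $\calA$ flat over $\calO_S$, and preservation of monomorphisms with flat cokernels applied to $\calO_X \to \calB$ forces $\calO_S \to \calA$ to have the same form, which upgrades flatness of $V \to S$ to faithful flatness (an fpqc cover). Tensor-functoriality combined with flat base change along $p$ identifies $F(\calB^{\otimes n}) \simeq \calA^{\otimes_{\calO_S} n}$ naturally in $[n] \in \Delta$, so the simplicial scheme $\Spec_S F(\calB^{\otimes \bullet})$ is canonically the Cech nerve $V^\bullet$ of $V \to S$, and the natural comparison $V^\bullet \to U^\bullet$ of simplicial stacks induced by $F$ descends under fpqc-realization to a morphism $f: S \simeq |V^\bullet| \to |U^\bullet| \simeq X$.

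I would conclude by verifying essential image and full faithfulness by descent. After base change along the covers $V \to S$ and $U \to X$, any colimit-preserving tensor functor between module categories over a single ring is classified by that ring map (Gabriel-type reconstruction for affines), and both $F$ and $f^*$ become the same ring map $B \to \Gamma(V, \calO_V)$ encoded in the $\calO_S$-algebra structure on $\calA$; similarly, a tensor-natural isomorphism $f^* \simeq g^*$ restricts to an isomorphism of $\calO_S$-algebras $f^*\calB \simeq g^*\calB$ with all higher-simplicial compatibility, which descends to $f \simeq g$ via the affine diagonal of $X$. The technical heart of the argument, and the principal obstacle, is the faithfulness step that turns $V$ into a genuine cover of $S$: dropping the preservation of monomorphisms with flat cokernels would leave $V \to S$ merely flat and possibly non-surjective, so the realization $|V^\bullet|$ could be a proper sub-stack of $S$ rather than $S$ itself, and the descent-theoretic construction of $f$ would collapse.
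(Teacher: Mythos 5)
The paper does not prove \autoref{thm:LurieClassicalTD} itself---it is attributed to Lurie's \cite{classical_tannaka}---but the Cech-nerve strategy you lay out is precisely the one the paper adopts for its own Tannaka results (compare \autoref{prop:covering} and the proof of \autoref{thm:td_perfect_stacks}): pick a faithfully flat affine cover $p : U \to X$ with $\calB = p_*\calO_U$, show that $F$ carries $\calB$ to an fpqc $\calO_S$-algebra, reduce to the affine case of Tannaka duality level by level on the Cech nerve, and descend. So your route is the right one and matches the paper's methodology.

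Your identification of the technical pivot is exactly correct: the hard part is upgrading $F(\calB)$ from a flat $\calO_S$-algebra to a \emph{faithfully} flat one, and this is exactly what preservation of monomorphisms with flat cokernel is for. Two small points worth tightening. First, the justification that $\calO_X \to \calB$ is a monomorphism with flat cokernel does not need a smooth-local check on $U$: a faithfully flat ring map $A \to B$ is pure, and from the exact sequence $0 \to A \to B \to B/A \to 0$ with $B$ flat one reads off $\Tor_1^A(B/A,M) = \ker(M \to M\otimes_A B) = 0$ for every $M$, so $B/A$ is flat. Conversely, if $\calO_S \to \calA$ is a monomorphism with flat cokernel and $\calA$ is flat, then the same $\Tor$ computation shows $M \to M \otimes \calA$ is injective for every $M$, i.e.\ $\calO_S \to \calA$ is pure, hence faithfully flat; this is the argument you want. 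Second, the phrase ``Gabriel-type reconstruction'' for the affine base case is a slight misattribution---what you are using is the monoidal Eilenberg--Watts statement that colimit-preserving symmetric monoidal functors $\Mod_A \to \Mod_B$ correspond to ring maps $A \to B$, which needs no flatness hypothesis and is more elementary than Gabriel's theorem. Finally, you should add a sentence verifying that the constructed $f$ does recover $F$: this falls out of the compatible identifications $\QCoh(X) \simeq \Tot\,\QCoh(U^\bullet)$ and $\QCoh(S) \simeq \Tot\,\QCoh(V^\bullet)$, but it should be said. With those clean-ups the argument is complete.
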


Lurie applied this result in \cite{classical_tannaka} to extend certain algebraization results from Serre's GAGA machinery to stacks. In subsequent work \cite{DAGVIII}, he extended \autoref{thm:LurieClassicalTD} to the case of derived stacks, interpreted in the world of {\em spectral} algebraic geometry. In this homotopical setting, the category $\QCoh(X)$ of quasi-coherent sheaves is an extremely coarse invariant, and one must replace it with the derived $\infty$-category $D(X)$ of quasi-coherent complexes to obtain a well-behaved object. With this caveat, Lurie's spectral Tannaka duality  \cite[Theorem 3.4.2]{DAGVIII} is quite close to the classical one:

\begin{theorem}[Lurie]
\label{thm:LurieSpectralTD}
Let $X$ be a spectral\footnote{The reader unfamiliar with spectral algebraic geometry may safely ignore the adjective ``spectral'' in all the results in the introduction without losing much in terms of content.} algebraic stack with affine diagonal, and let $S$ be any spectral affine scheme. Then the association $f \mapsto f^*$ gives a fully faithful embedding
\begin{equation} \label{eqn:tannaka}
\Map(S,X) \to \Fun^L_{\otimes}(D(X),D(S))
\end{equation}
with essential image spanned by functors that preserve connective objects and flat objects.
\end{theorem}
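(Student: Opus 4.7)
The plan is to prove both full faithfulness and the essential image characterization simultaneously, by combining a direct analysis of the affine case with a descent reduction for general $X$. The affine case is the engine: when $X = \Spec A$, the $\infty$-category $D(X) \simeq \Mod_A$ is the free presentable stable symmetric monoidal $\infty$-category generated by the $E_\infty$-algebra $A$, and the universal property of modules gives
\[ \Fun^L_\otimes(\Mod_A, D(S)) \simeq \CAlg(D(S))_{A/} \simeq \Map_{\CAlg}(A, \Gamma(S, \calO_S)) \simeq \Map(S, \Spec A), \]
with the equivalence implemented by $F \mapsto F(A)$. For affine target, every such $F$ automatically preserves connective and flat objects, so the essential image condition is vacuous here.

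For general $X$ with affine diagonal I would choose a smooth surjection $p \colon U \to X$ from a spectral affine $U$. Because $X$ has affine diagonal, each fiber product $U_n := U \times_X \cdots \times_X U$ is again affine. Flat (or smooth) descent in spectral algebraic geometry provides $\Map(S, X) \simeq \Tot \Map(S, U_\bullet)$, while faithfully flat descent for quasi-coherent complexes identifies $D(X)$ with $\Tot D(U_\bullet)$. Full faithfulness should then follow by combining both totalizations with the affine case applied levelwise in the \v{C}ech nerve.

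For the essential image, given $F$ preserving connectives and flats, I would reconstruct $f$ as follows. The algebra $\calA := p_* \calO_U \in D(X)$ is connective and flat over $\calO_X$, so $F(\calA)$ is a connective flat $E_\infty$-$\calO_S$-algebra and thereby encodes an affine flat map $V \to S$. Applying $F$ to the entire \v{C}ech diagram gives a simplicial spectral affine $V_\bullet$ over $S$ together with a map $V_\bullet \to U_\bullet$ whose levelwise components are furnished by the affine case; the geometric realization $|V_\bullet| \to |U_\bullet| = X$ should yield the desired $f \colon S \to X$, and one verifies $f^* \simeq F$ by tracking the construction through the descent data.

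The central obstacle is establishing that $V \to S$ is \emph{faithfully} flat, without which one cannot identify $|V_\bullet|$ with $S$ or conclude that $f^*$ recovers $F$. This conservativity statement must be extracted from the hypotheses: preservation of the unit together with preservation of connective and flat objects should force $F$ not to annihilate any nonzero connective flat algebra, which in turn yields surjectivity of $V \to S$. A secondary technical hurdle is rigorously formulating flat descent for the symmetric monoidal functor $\infty$-category so that the levelwise affine Tannaka genuinely glues into a global statement over the \v{C}ech nerve, but this should be essentially bookkeeping compared to the conservativity issue.
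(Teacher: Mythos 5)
This theorem is cited from Lurie's \cite[Theorem 3.4.2]{DAGVIII}; the paper does not reprove it, but its own \autoref{prop:covering} (and the proof of \autoref{thm:td_perfect_stacks}) encapsulate the same structure you describe: establish the affine case, produce a faithfully flat quasi-affine $V \to S$ from $F$ applied to an affine cover $U \to X$, and glue along the \v{C}ech nerves. Your affine-case analysis is correct, and your recognition that faithful flatness of $F(\calO_U)$ is the pivotal point is also correct. But the way you propose to obtain it does not work: showing that $F$ does not annihilate nonzero flat $\calO_X$-algebras (which is essentially automatic, since the unit map $\calO_S \to F(\calA)$ is always nonzero when $F(\calA)\neq 0$) says nothing about whether $F(\calO_U)\otimes_{\calO_S}(-)$ is conservative on $D(S)$. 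Faithfulness is a statement about objects of $D(S)$ that need not lie in the image of $F$, so no amount of ``$F$ doesn't kill flat algebras on $X$'' can force it.

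The missing idea is a structural characterization of faithful flatness that is visibly stable under a symmetric monoidal exact functor preserving flats. Concretely: an $\calO_X$-algebra $\calA$ which is flat is faithfully flat if and only if the cofiber $\calA/\calO_X$ is flat as an $\calO_X$-module (this is the universal-injectivity criterion; one direction uses $\Tor_1(M,\calA/\calO_X)=\ker(M\to M\otimes\calA)$, the other that purity follows from flatness of the quotient). Since $U\to X$ is faithfully flat, $\calO_U/\calO_X$ is flat; since $F$ is exact and symmetric monoidal, $F(\calO_U/\calO_X)\simeq \mathrm{cofib}(\calO_S\to F(\calO_U))$; since $F$ preserves flats, this cofiber is flat over $\calO_S$, and so $\calO_S\to F(\calO_U)$ is faithfully flat. (This is also exactly why the \emph{classical} \autoref{thm:LurieClassicalTD} must ask for preservation of monomorphisms with flat cokernels in addition to preservation of flats --- an elaboration of the same condition.) Secondarily, the descent identification $\Map(S,X)\simeq \Tot\Map(S,U_\bullet)$ does not typecheck as written: $\Map(S,U_\bullet)$ is a \emph{simplicial} space, and $\Map(S,-)$ does not commute with the geometric realization $|U_\bullet|\simeq X$. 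The correct bookkeeping is the content of \autoref{prop:covering}: one first builds a simplicial object $V_\bullet$ over $S$ together with levelwise maps $V_\bullet \to U_\bullet$, checks that $V_\bullet$ is a \v{C}ech nerve of the faithfully flat $V\to S$, and only then passes to realizations; this reduction is not merely bookkeeping and is worth isolating as a lemma.
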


Here $\Fun^L_{\otimes}(D(X),D(S))$ is the $\infty$-category of colimit preserving $\otimes$-functors $D(X) \to D(S)$, and a complex $K \in D(X)$ is {\em connective} if it lies in $D^{\leq 0}(X)$. In practice, preservation of connectivity is a rather mild constraint, and easy to check. On the other hand, in potential applications of \autoref{thm:LurieSpectralTD}, it is rather hard to verify preservation of flatness. For instance, it is not obvious how to explicitly prove preservation of flat objects in many of the example applications we present below.\footnote{As another example, after constructing a particular symmetric monoidal functor in the proof of Lurie's algebraizability of formal stacks theorem \cite[Theorem 5.4.1]{DAGXII}, checking that it preserves flat objects adds substantial complexity to the argument.}

\subsection{Tannaka duality results} Our goal in this paper is to identify a class of stacks where the flatness condition in \autoref{thm:LurieSpectralTD} can be dropped, or at least be replaced by a more tractable one for applications. As the first step in this direction, we show the following:

\begin{theorem}
\label{thm:CompactlyGeneratedTD}
Let $X$ be a spectral algebraic stack with quasi-affine diagonal such that $D(X)$ is compactly generated. Then, for any spectral affine scheme $S$, the association $f \mapsto f^*$ gives a fully faithful embedding
\begin{equation} 
\Map(S,X) \to \Fun^L_{\otimes}(D(X),D(S))
\end{equation}
with essential image spanned by functors that preserve connective objects.
\end{theorem}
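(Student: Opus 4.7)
The plan is to reduce to Lurie's spectral Tannaka duality (\autoref{thm:LurieSpectralTD}) by arguing that the compact generation of $D(X)$ obviates the need to check preservation of flat objects, and by lifting the affine-diagonal hypothesis there to the quasi-affine-diagonal setting we are in.

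For fully faithfulness of $\Map(S,X) \to \Fun^L_\otimes(D(X),D(S))$ I would fix a smooth affine atlas $p\colon U \to X$. Since the diagonal of $X$ is quasi-affine, each fiber product $U^{\times_X \bullet}$ is quasi-affine in each cosimplicial degree. Flat descent on the geometric side expresses $\Map(S,X)$ as the totalization $\Tot \Map(S, U^{\times_X \bullet})$, and the compact generation of $D(X)$ together with descent of quasi-coherent complexes along faithfully flat affine morphisms gives the corresponding expression on the right-hand side of the embedding. Since each cosimplicial degree lands in the (quasi-)affine setting, where the statement is already known (either trivially or by an easy extension of Lurie's theorem), termwise fully faithfulness gives fully faithfulness of the totalization.

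For the essential image, let $F\colon D(X) \to D(S)$ be a colimit-preserving symmetric monoidal functor preserving connective objects. By \autoref{thm:LurieSpectralTD} (in the quasi-affine-diagonal form needed above), it suffices to verify that $F$ also preserves flat objects. Compact generation enters in two ways. First, under our hypotheses, the compact objects of $D(X)$ coincide with perfect complexes, and in particular they are dualizable. Second, since any symmetric monoidal colimit-preserving functor preserves dualizable objects, $F$ sends $\Perf(X) = D(X)^c$ into $\Perf(S)$, so (equivalently) its right adjoint $G$ preserves filtered colimits. Given this, a connective flat object $M \in D(X)$ can be approximated as a filtered colimit of perfect connective complexes via a Lazard-type decomposition, performed Zariski-locally on the atlas and then globalized using compact generation. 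Applying $F$ commutes with the colimit, and symmetric monoidality combined with connective preservation forces $F(M) \otimes_{\calO_S} (-)$ to remain $t$-exact on discrete objects, so $F(M)$ is flat.

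I expect the main obstacle to be this last step: extracting flat-preservation from compact generation, symmetric monoidality, and connective preservation. The difficulty is that flat objects on $X$ need not themselves be compact, so the control which compact generation gives us over $F$ only reaches flat objects indirectly, via a Lazard-type presentation that must be carried out carefully in a stacky spectral setting. A secondary and easier obstacle is upgrading \autoref{thm:LurieSpectralTD} from the affine to the quasi-affine diagonal hypothesis, which is handled by the descent arguments sketched above.
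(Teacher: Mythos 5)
Your proposal for full faithfulness is reasonable (though the paper simply cites \cite[Proposition 3.3.11]{DAGVIII}, which already handles the quasi-geometric case). The real divergence, and the problem, is in the essential-image argument.

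You want to reduce to Lurie's \autoref{thm:LurieSpectralTD} by showing that $F$ preserves all flat objects. The paper deliberately avoids this: its \autoref{prop:covering} shows that $F$ is geometric as soon as $F(\calO_U)$ is the algebra of a faithfully flat quasi-affine cover of $S$, for a \emph{single} affine fppf cover $U \to X$. One then uses the algebraic characterization of quasi-affine maps (\autoref{thm:QAchar}) to show $F(\calO_U)$ is quasi-affine, and a clean adjunction argument (the coconnectivity claim about the right adjoint $G$) to show the resulting map $F(U)\to S$ is flat. None of this requires showing $F$ preserves all flat objects.

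Your ``Lazard-type decomposition'' step has a genuine gap and I do not see how to repair it under the hypotheses of the theorem. Two separate issues. First, there is no Lazard theorem in this setting: a flat quasi-coherent sheaf on a stack (even on a quasi-compact scheme or algebraic space) is not generally a filtered colimit of vector bundles or of perfect connective complexes. That would amount to something like the resolution property, which is strictly stronger than compact generation of $D(X)$; many qcqs algebraic spaces have compactly generated $D(X)$ (Thomason) without having enough vector bundles. ``Performed Zariski-locally on the atlas and then globalized using compact generation'' is exactly the step that fails --- compact generation gives you enough perfect complexes, but perfect complexes are not flat, and gluing local Lazard presentations to a global one over a stack is not something compact generation provides. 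Second, even granting $M \simeq \colim P_i$ with $P_i$ perfect connective, the conclusion $F(M)$ flat does not follow: a filtered colimit of perfect connective complexes need not be flat, since connective perfect complexes are not flat unless they are discrete finite projective modules. For your argument to close you would need the $P_i$ to be vector bundles, which brings you back to the resolution property. So the flat-preservation route, as sketched, does not go through; you would need the paper's route through quasi-affine algebras and the coconnectivity criterion, or some other idea.
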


Recall that $D(X)$ is compactly generated if there exist ``enough'' perfect complexes on $X$. In particular, by a fundamental result of Thomason, \autoref{thm:CompactlyGeneratedTD} applies to any (quasi-compact and quasi-separated) algebraic space; this special case, together with applications to algebraization questions, was the subject of \cite{bhatt2014algebraization}, and the proof here is heavily inspired by the latter, though we offer a few simplifications. More generally, \autoref{thm:CompactlyGeneratedTD} applies to a fairly large class of stacks, including quotient stacks in characteristic $0$, and any stack whose diagonal is quasi-finite and separated (by \cite{hall2014perfect}). However, this does not include all stacks one encounters in real life, especially in characteristic $p$. To remedy this, we prove the following:

\begin{theorem}
\label{thm:NoethTD}
Let $X$ be a noetherian spectral algebraic stack with quasi-affine diagonal. Then, for any spectral affine scheme $S$, the association $f \mapsto f^*$ gives a fully faithful embedding
\begin{equation} 
\Map(S,X) \to \Fun^L_{\otimes}(D(X),D(S))
\end{equation}
with essential image spanned by functors that preserve connective objects and pseudo-coherent objects.
\end{theorem}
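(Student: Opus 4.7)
The plan is to deduce \autoref{thm:NoethTD} from \autoref{thm:LurieSpectralTD} by showing that, for $X$ noetherian, the hypothesis ``preserves connective and pseudo-coherent objects'' implies the condition ``preserves connective and flat objects'' required in Lurie's theorem. The fully faithfulness assertion is then literally a special case of \autoref{thm:LurieSpectralTD}, since the target $\Fun^L_\otimes(D(X),D(S))$ of the embedding is the same in both theorems.

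Thus the real content is: given a colimit-preserving symmetric monoidal functor $F: D(X) \to D(S)$ preserving connective and pseudo-coherent objects, one must show $F$ also preserves flat objects. My first step would be to use noetherianness to upgrade the hypothesis on $F$ to genuine $t$-structure compatibility on the subcategory of almost perfect objects. Concretely, using that every bounded-below pseudo-coherent complex on a noetherian $X$ admits a Postnikov filtration with coherent layers, one should be able to show $F$ is $t$-exact on $\APerf(X)$; in particular, $F$ sends coherent sheaves on $X$ to discrete objects on $S$, and sends connective pseudo-coherent complexes to connective pseudo-coherent complexes in a level-wise fashion.

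Next, I would write any connective flat $M$ on $X$ as a filtered colimit $M = \colim_i M_i$ of almost perfect connective objects, using that $\QCoh(X)^{\heartsuit}$ is generated by coherent sheaves under filtered colimits for noetherian $X$ and refining this fact to the spectral setting. Since $F$ preserves colimits and pseudo-coherent connective objects, $F(M) = \colim_i F(M_i)$ is a filtered colimit of pseudo-coherent connective objects on $S$, hence connective. To then promote this to flatness of $F(M)$, I would test against discrete objects $N$ on $S$ by reducing, via the colimit presentation and the flatness of $M$ on $X$, to an assertion about Tor-vanishing detectable by pseudo-coherent test objects on $X$ --- exactly the objects that $F$ controls.

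The main obstacle I foresee is the final tensor-vanishing step: bridging the finiteness-type condition (preservation of pseudo-coherence) to the tensor-exactness condition (preservation of flatness), since the test objects $N$ for flatness of $F(M)$ live on $S$ and need not be pulled back from $X$. If the direct approach stalls here, a natural fallback is a smooth-descent argument: pick a smooth affine atlas $p: U \to X$, manufacture from $F$ a compatible family of colimit-preserving symmetric monoidal functors $D(U_\bullet) \to D(T_\bullet)$ for a Čech cover $T_\bullet \to S$ pulled back from $p$, apply \autoref{thm:CompactlyGeneratedTD} to obtain maps $T_\bullet \to U_\bullet$, and descend to the desired $f: S \to X$ using quasi-affineness of the diagonal of $X$.
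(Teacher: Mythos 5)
Your plan has a structural problem before it even reaches the step you flag as the main obstacle. You propose to reduce \autoref{thm:NoethTD} to \autoref{thm:LurieSpectralTD} by showing that preservation of pseudo-coherent and connective objects implies preservation of flat objects. But \autoref{thm:LurieSpectralTD} is stated for stacks with \emph{affine} diagonal, whereas \autoref{thm:NoethTD} only assumes a \emph{quasi-affine} diagonal. So the deduction is not available: neither the full faithfulness nor the essential image description in Lurie's theorem applies to the class of stacks covered by \autoref{thm:NoethTD}. (The full faithfulness in the noetherian theorem is instead drawn from the quasi-geometric case, \cite[Proposition 3.3.11]{DAGVIII}, and a substantial portion of the paper --- the quasi-affine algebra characterization in \S2 --- is precisely what lets one relax affine diagonal to quasi-affine diagonal.)

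Even granting affine diagonal, the direct ``pseudo-coherent $\Rightarrow$ flat'' argument is underdeveloped, and you correctly identify the gap: the Tor-vanishing you would need to test involves arbitrary discrete $N$ on $S$, which the functor $F$ gives you no handle on, and writing a flat $M$ as a filtered colimit of coherent $M_i$ does not help because the $M_i$ are not flat, so $F(M_i)$ being discrete tells you nothing about flatness of the colimit. Your fallback --- build a \v{C}ech cover $T_\bullet \to S$ ``pulled back from $p$'' and apply \autoref{thm:CompactlyGeneratedTD} level-wise --- presupposes the very thing you need to prove: that $F(\calO_U)$ corresponds to a faithfully flat quasi-affine algebra over $S$. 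The paper's actual proof takes a different route entirely. It first shows (via \autoref{thm:QAchar} and \autoref{cor:preserve_quasi_affine}) that $F$ sends the algebra of the smooth atlas $U \to X$ to a quasi-affine algebra on $S$, reducing the question to showing $F(U) \to S$ is a smooth cover. This is verified through a Jacobian smoothness criterion (\autoref{lem:SmoothMapCharacterization}, \autoref{lem:preserve_smooth}), and the remaining field case is handled by the Drinfeld--Gaitsgory stratification of noetherian stacks by global quotients, \autoref{lem:finite_etale}, and \autoref{lem:global_quotient}. None of this machinery appears in your proposal, and there is no shortcut through Lurie's affine-diagonal theorem.
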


Recall that $K \in D(X)$ is {\em pseudo-coherent} if, locally on $X$, it can be represented by a bounded-above complex of vector bundles; the relevance of pseudo-coherence here is that noetherian stacks might not always possess ``enough'' perfect complexes, but there is always an abundance of pseudo-coherent complexes.

In addition to these two main theorems, we establish several variants which are easier to apply in practice. For instance, for Noetherian stacks we can identify maps $S \to X$ directly with symmetric monoidal functors $\APerf(X)^{cn} \to \APerf(S)^{cn}$ which preserve finite colimits. We refer the reader to \S \ref{sect:small_categories} and also to \S \ref{sect:DM_stacks} for variants relevant to Deligne-Mumford stacks.

\subsection{Applications to algebraization and representability}

The preceding results have applications to algebraization questions. For example, in analogy with the case of algebraic spaces treated in \cite{bhatt2014algebraization}, one obtains the following continuity, or {\em integrability}, result for adic points:

\begin{corollary}
\label{cor:AlgebraizeLimits}
Let $X$ be a Noetherian spectral algebraic stack with quasi-affine diagonal or a spectral Deligne-Mumford stack whose underlying classical stack is quasi-compact, separated, and tame. Let $A$ be a commutative ring that is $I$-adically complete for some ideal $I$. Then
\[ X(A) \simeq \lim X(A/I^n).\]
\end{corollary}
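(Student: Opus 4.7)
The plan is to use Tannaka duality to convert the geometric statement into a purely categorical one about pseudo-coherent complexes on affine schemes, and then appeal to a classical $I$-adic completeness fact. By the variant of \autoref{thm:NoethTD} mentioned in \S\ref{sect:small_categories}, for any connective commutative ring $R$ one has
\[X(R) \;\simeq\; \Fun^{\otimes,\mathrm{rex}}\bigl(\APerf(X)^{cn}, \APerf(R)^{cn}\bigr),\]
where the right hand side denotes symmetric monoidal functors preserving finite colimits. Applying this to $R=A$ and to each $R=A/I^n$, and commuting the limit on the right past $\Fun$, the corollary reduces to showing the equivalence
\[\APerf(A)^{cn} \;\simeq\; \lim_n \APerf(A/I^n)^{cn}\]
of symmetric monoidal stable $\infty$-categories.

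This last equivalence is the derived enhancement of Grothendieck's classical theorem that, for an $I$-adically complete Noetherian ring $A$, the category of finitely generated $A$-modules is the limit of the categories of finitely generated $A/I^n$-modules. One proves it by producing mutually inverse functors: the forward map sends $M$ to the tower $\{M \otimes_A A/I^n\}$, while the inverse takes a compatible tower to its limit. Fully faithfulness boils down to $M \simeq \lim_n M \otimes_A A/I^n$ for $M \in \APerf(A)^{cn}$, which follows from Artin--Rees together with the derived completeness of pseudo-coherent modules over an $I$-complete Noetherian ring; essential surjectivity uses that a compatible tower of finitely generated quotients is pro-equivalent to a constant tower given by its limit, again an Artin--Rees argument. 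The non-derived statement is in EGA, and a derived variant is recorded in Lurie's SAG.

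For the Deligne--Mumford case, one invokes the variant of Tannaka duality established in \S\ref{sect:DM_stacks} for DM stacks whose underlying classical stack is quasi-compact, separated, and tame. This variant again identifies $X(R)$ with symmetric monoidal functors between a pair of small $\infty$-categories (the tameness hypothesis is what makes the resulting description of $X(R)$ well-behaved). The same reduction then produces the corollary from the identical algebraic input $\APerf(A)^{cn} \simeq \lim_n \APerf(A/I^n)^{cn}$.

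The main obstacle is establishing the two small-categories variants of Tannaka duality in the first place, which is the substantive work done earlier in the paper. Granted those variants, the corollary is a short formal consequence combined with the Artin--Rees style descent for pseudo-coherent complexes, and there is no further interaction between the geometry of $X$ and the completion of $A$ to worry about.
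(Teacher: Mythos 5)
Your approach is essentially identical to the paper's: invoke the small-categories Tannaka duality (via \autoref{lem:small_cat_Noetherian} and \autoref{thm:td_Noetherian}) to reduce the Noetherian case to the equivalence $\APerf(A)^{cn} \simeq \lim_n \APerf(A/I^n)^{cn}$, and establish that equivalence by an $I$-adic completeness argument (the paper proves $\APerf(A) \simeq \lim \APerf(A/I^n)$ via \autoref{lem:PerfAPerfContinuity}, citing the Stacks Project for essential surjectivity, and then separately observes that connectivity can be detected mod $I$; your Artin--Rees sketch covers the same content). One inaccuracy worth flagging: for the tame Deligne--Mumford case you claim the reduction hits ``the identical algebraic input,'' but the relevant Tannaka statement there (\autoref{cor:tame_DM}) identifies $X(R)$ with $\Fun^L_\otimes(D(X)^c,\Perf(R))$, so the needed input is $\Perf(A) \simeq \lim_n \Perf(A/I^n)$ rather than the $\APerf^{cn}$ version; both are covered by the same continuity lemma in the paper, so the corollary is unaffected, but the two statements are not literally interchangeable ($\Perf$ over a Noetherian ring is the bounded part of $\APerf$, not its connective part).
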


The main non-trivial step in \autoref{cor:AlgebraizeLimits} is to algebraize a compatible system $\{\Spec(A/I^n) \to X\}$ to a  map $\Spec(A) \to X$. We do this by reformulating the question purely in terms of the derived $\infty$-category (via \autoref{thm:CompactlyGeneratedTD} and \autoref{thm:NoethTD}), and then using suitable continuity properties of of the latter. More generally, these ideas are useful in studying representability questions, and we illustrate this in \S \ref{sect:mapping_stacks} by proving the following, which is the key step in the proof of the algebraicity of $\Hom$-stacks given in \cite{halpern2014mapping}. 

\begin{corollary}
	\label{cor:IntegrabilityMappingStackSpectral}
	Let $f:X \to S$ and $g:Y \to S$ be finitely presented morphisms of noetherian spectral algebraic stacks. Assume that $f$ is proper\footnote{In fact, we simply need $X \to S$ to be cohomologically proper (see \cite{halpern2014mapping}); this allows many more examples, such as $B(\G_m)$.}, and that $g$ has quasi-affine diagonal. Then the $\Hom$-stack $\Map_S(X,Y)$ is integrable, i.e., $\Map_S(X,Y)(-)$ satisfies the conclusion of \autoref{cor:AlgebraizeLimits}.
\end{corollary}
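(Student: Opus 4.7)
The plan is to combine Tannaka duality (applied to $Y$) with a formal GAGA theorem (applied to the proper morphism $f$). Unwinding definitions, a point of $\Map_S(X, Y)$ over a ring $R$ is a pair consisting of a map $\Spec(R) \to S$ together with an $S$-morphism $X_R := X \times_S \Spec(R) \to Y$. So given $A$ noetherian and $I$-adically complete, a compatible family $\{\xi_n\}$ with $\xi_n \in \Map_S(X,Y)(A/I^n)$ provides compatible maps $s_n : \Spec(A/I^n) \to S$ together with compatible $S$-maps $\phi_n : X_{A/I^n} \to Y$, where $X_{A/I^n} := X \times_{S,s_n} \Spec(A/I^n)$. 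Integrability of $S$ itself (via \autoref{cor:AlgebraizeLimits}) lifts $\{s_n\}$ to $s : \Spec(A) \to S$, so after base change along $s$ the task reduces to algebraizing $\{\phi_n\}$ to a single $S$-map $\phi : X_A \to Y$.

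Here is where Tannaka duality enters. Since $Y$ is noetherian with quasi-affine diagonal, the variant of \autoref{thm:NoethTD} allowing an arbitrary (non-affine) source promised in \S\ref{sect:small_categories} identifies each $\phi_n$ with a symmetric monoidal, finite-colimit preserving functor
\[\Phi_n : \APerf(Y)^{cn} \to \APerf(X_{A/I^n})^{cn},\]
equipped with a prescribed compatibility with the pullback functor from $\APerf(S)^{cn}$ (this compatibility encodes the ``over $S$'' condition). Assembling these as $n$ varies yields a single functor
\[\Phi : \APerf(Y)^{cn} \to \lim_n \APerf(X_{A/I^n})^{cn}.\]
The crux of the argument is then the formal GAGA equivalence
\[\APerf(X_A)^{cn} \xrightarrow{\sim} \lim_n \APerf(X_{A/I^n})^{cn},\]
which holds because $X \to \Spec(A)$ is proper and $A$ is noetherian and $I$-adically complete; this is the spectral Grothendieck existence theorem in the form established in Lurie's SAG. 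Combining these, we obtain $\Phi : \APerf(Y)^{cn} \to \APerf(X_A)^{cn}$, and a reverse application of Tannaka duality produces the desired map $\phi : X_A \to Y$ whose reduction modulo $I^n$ recovers $\phi_n$; the ``over $S$'' compatibility is transported along this process by construction.

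The main obstacle is the formal GAGA step: one needs the derived-categorical statement (for almost perfect connective complexes, not merely for coherent sheaves) in the generality of proper morphisms of noetherian spectral algebraic stacks. This is precisely where the properness of $f$ --- or, more flexibly, its cohomological properness as emphasized in \cite{halpern2014mapping} --- is crucial; every other step is a largely formal consequence of the main Tannaka duality theorems of this paper, combined with the integrability of $S$ that has already been settled in \autoref{cor:AlgebraizeLimits}.
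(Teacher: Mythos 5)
Your approach (Tannaka duality for $Y$ combined with formal GAGA for the proper morphism $f$) is the same route the paper takes in \S\ref{sect:mapping_stacks}, and the core of the argument --- identifying $\Map_S(-,Y)$ with symmetric monoidal finite-colimit-preserving functors out of $\APerf(Y)^{cn}$ via \autoref{thm:td_Noetherian} and \autoref{lem:small_cat_Noetherian}, then invoking the equivalence $\APerf(X_A)^{cn} \simeq \lim_n \APerf(X_{A/I^n})^{cn}$ --- matches the paper's proof. The paper phrases the GAGA input as a hypothesis and precomposes with that single equivalence rather than assembling the $\Phi_n$ levelwise, but these are equivalent by the standard fact that $\Func_\otimes(\calC,-)$ commutes with limits.

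The one step that does not go through as written is the initial algebraization of $\{s_n : \Spec(A/I^n) \to S\}$ to $s : \Spec(A) \to S$ via \autoref{cor:AlgebraizeLimits}. That corollary requires $S$ to have quasi-affine diagonal (or to be a tame separated DM stack), whereas the statement here only assumes $S$ is a noetherian spectral algebraic stack. The paper never needs this step because the relevant notion of integrability, taken from \cite{halpern2014mapping} and made explicit at the start of \S\ref{sect:mapping_stacks}, is \emph{relative}: the test ring $R$ is assumed to already come with a map to $S$, and one shows $\Map_S(X \times_S \Spec R, Y) \to \Map_S(X \times_S \op{Spf}(R), Y)$ is an equivalence. (The phrase ``satisfies the conclusion of \autoref{cor:AlgebraizeLimits}'' in the introduction's statement is slightly loose shorthand for this.) If you adopt the relative formulation, your first paragraph is unnecessary and the rest of your argument is essentially the paper's; if you insist on the absolute formulation, you would need to add a hypothesis on $S$ to make the $s$-algebraization legitimate.
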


\begin{remark}
	We emphasize that one of novelties here is the lack of finiteness assumptions on the test objects. Indeed, in noetherian situations, standard techniques based on formal geometry often allow one to prove algebraization results relatively painlessly. For example, the special case of \autoref{cor:AlgebraizeLimits} when both $X$ and $A$ are noetherian and classical (i.e., non-derived) follows immediately from Grothendieck's formal existence theorem (at least when $X$ is separated). Likewise, the special case of \autoref{cor:IntegrabilityMappingStackSpectral} when all objects involved are classical can be deduced relatively quickly from Grothendieck's work on formal smoothness (at least when $Y$ has affine diagonal), see \S \ref{ss:NonDerived}. On the other hand, as in \cite{bhatt2014algebraization}, we do not know how to prove the classical version of \autoref{cor:AlgebraizeLimits} in general without any derived input.
\end{remark}

Finally, it is possible to use the Tannaka duality results to identify certain pushouts. For example, it is possible to deduce Beauville-Laszlo type theorems (as in \cite[\S 1.3]{bhatt2014algebraization}) for stacks satisfying the conditions of \autoref{thm:CompactlyGeneratedTD} or \autoref{thm:NoethTD}. Moreover, one can also describe certain ``strange'' pushouts. Here the word ``strange'' refers to the non-flatness of the glueing maps: one can glue along proper birational maps. We give one representative example here; more general statements can be found in \S \ref{sec:strange_pushout}, and this phenomenon will be developed more thoroughly in \cite{hl_reconstruction}.

\begin{example}
	Let $C_0 \subset \P^3$ and $C_1 \subset \P^3$ be two smooth curves that intersect transversally. For $i \in \{0,1\}$, let $Z_i \to \P^3$ be the blowup of $\P^3$ along $C_i$, and let $X_i \to Z_i$ be the blowup of $Z_i$ along the strict transform of $C_{1-i}$. Then $X_0 \simeq X_1$ as schemes over $\P^3$. Moreover using \autoref{thm:NoethTD}, one can show that the resulting commutative square
	\[ \xymatrix{ X_0 \simeq X_1 \ar[d] \ar[r] & Z_1 \ar[d] \\
	Z_0 \ar[r] & \P^3 }\]
	is a pushout square.
\end{example}

\begin{remark}
For additional applications, we refer the reader to \cite{halpern2014on}, where the Tannaka duality results above is used, among other places, in establishing the convexity of the degeneration space of a point in a $\Theta$-reductive stack.
\end{remark}

\subsection{A key intermediate result}

We end this introduction by commenting briefly on a key intermediate result that allows us to relax the hypothesis on the diagonal in Lurie's \autoref{thm:LurieSpectralTD}: we give a description quasi-affine morphisms in terms of the derived $\infty$-category. To motivate this, fix some scheme (or stack) $X$. It is then well-known that the association $Y \mapsto f_* \calO_Y$ gives an equivalence of categories between affine $X$-schemes and commutative algebras in $\QCoh(X)$. It is natural to wonder if this ``algebraic'' classification of ``geometric'' objects can be extended to a wider class of morphisms using $D(X)$ in lieu of $\QCoh(X)$. Lurie has already shown that the same formula, when interpreted at the derived level, gives a fully faithful embedding of the category of quasi-affine morphisms over $X$ into commutative algebras in $D(X)$; we complete the picture by describing the essential image of this functor purely algebraically. Instead of giving the full result here, we simply state the most important special case:

\begin{theorem}
\label{thm:QuasiAffineChar}
Let $X$ be a spectral algebraic stack. Then the association $U \mapsto f_* \calO_U$ identifies the poset of quasi-compact open substacks $U \subset X$ and with the poset of commutative algebras $\calA \in D(X)$ satisfying:
\begin{enumerate}
\item The structure morphism $\calO_X \to \calA$ is a localization (i.e., $\calA \otimes \calA \simeq \calA$ via the multiplication map).
\item $\calA$ is compact as a commutative algebra in $D(X)$, and bounded above as a complex.
\end{enumerate}
\end{theorem}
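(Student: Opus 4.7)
The plan is to invoke the existing result of Lurie that $(f \colon Y \to X) \mapsto f_*\calO_Y$ embeds the $\infty$-category of quasi-affine morphisms over $X$ fully faithfully into $\CAlg(D(X))$ (this is alluded to in the paragraph preceding the theorem). Since quasi-compact open substacks form a full subposet of quasi-affine morphisms over $X$, the statement reduces to characterizing precisely which algebras $\calA = f_*\calO_Y$ in the essential image correspond to quasi-compact open immersions; that is, to verifying that (1) and (2) together cut out this subposet.

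For the forward direction, suppose $j \colon U \hookrightarrow X$ is a quasi-compact open immersion and set $\calA = j_*\calO_U$. Condition (1) is equivalent to the monomorphism property $U \times_X U \simeq U$, which is automatic for any open immersion. Both clauses of condition (2) are local on $X$, so they may be checked after pulling back along an affine chart $\Spec R \to X$; the preimage becomes a quasi-compact open in $\Spec R$ cut out by a finitely generated ideal $I = (f_1,\ldots,f_n)$. The amplitude bound $n-1$ then follows by Čech computation against the cover $\{D(f_i)\}$. For compactness in $\CAlg(D(R))$, I would observe that $\Map_{\CAlg(D(R))}(j_*\calO_U,\calB)$ is either contractible or empty, non-empty exactly when $I \cdot \pi_0\calB = \pi_0\calB$; since $I$ is finitely generated and $\pi_0$ commutes with filtered colimits, this condition is preserved under filtered colimits of $\calB$.

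For the reverse direction, given $\calA$ satisfying (1) and (2), Lurie's embedding produces a quasi-affine $f \colon Y \to X$ with $f_*\calO_Y \simeq \calA$, and condition (1) forces $f$ to be a monomorphism. Being an open immersion is local on the target, so after pulling back to an affine chart the problem becomes: show that a compact, idempotent, bounded-above $\calA \in \CAlg(D(R))$ is of the form $j_*\calO_U$ for a quasi-compact open $U \subset \Spec R$. I would define $U$ as the locus $\{\frap \in \Spec R : \calA \otimes_R k(\frap) \neq 0\}$, argue that its complement is cut out by a finitely generated ideal (using compactness of $\calA$ in $\CAlg$), and then compare $\calA$ with $j_*\calO_U$ by noting that both corepresent the same condition on test algebras $\calB$, namely that $\Spec \calB$ factors through $U$.

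The main obstacle is the last step: extracting a genuine quasi-compact open substack from the abstract algebraic data of a compact idempotent $\calA$. The subtlety is that compactness in $\CAlg(D(R))$ is strictly weaker than compactness in $D(R)$ (an idempotent algebra is almost never compact as a module, e.g. $\Q$ over $\Z$), so Thomason-style classifications of thick $\otimes$-ideals do not apply directly. One must instead combine the algebra-level compactness, which ensures that the localization is controlled by finitely many elements of $R$ and hence gives quasi-compactness of $U$, with the bounded-above hypothesis, which rules out exotic idempotents such as Matlis-type localizations at generic points (which are unbounded in amplitude) and thereby pins $\calA$ down as the Zariski structure sheaf of an honest open.
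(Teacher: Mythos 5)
Your forward direction is essentially correct and, if anything, a hair more direct than the paper's. The paper checks compactness of $\calO_U$ by choosing a perfect complex $K$ generating $D_Z(X)$ and proving a separate lemma that $K \otimes \colim \calA_i \simeq 0$ forces $K \otimes \calA_i \simeq 0$ for $i \gg 0$; you instead observe that $\Map_{\CAlg(D(R))}(\calO_U,\calB)$ is nonempty precisely when $I\cdot\pi_0\calB=\pi_0\calB$, and since $I$ is finitely generated and $\pi_0$ commutes with filtered colimits this is a condition that descends to a finite stage. (To make your criterion rigorous for non-connective $\calB$: the Koszul complex $K(f_1,\dots,f_r)\otimes_R\calB$ is a perfect $\calB$-module whose support lies in $V(I\pi_0\calB)$, so it vanishes as soon as $(f_i)$ generate the unit ideal of $\pi_0\calB$.) Both arguments ultimately reduce compactness in $\CAlg$ to the finiteness of $I$, and both are localizations of the same idea, so this side is fine.

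The reverse direction, however, has a genuine gap, and you have not resolved the obstacle you yourself flag. First, a circularity: you write that ``Lurie's embedding produces a quasi-affine $f:Y\to X$ with $f_*\calO_Y\simeq\calA$.'' Lurie's result is only that $\QAff_{/X}^{\opp}\to\CAlg(D(X))$ is fully faithful; it says nothing about which algebras lie in its image, and identifying that image is exactly what is being proved. Your later phrasing (``show that a compact, idempotent, bounded-above $\calA\in\CAlg(D(R))$ is of the form $j_*\calO_U$'') correctly drops this appeal, but then the hard part --- producing an actual open $U$ from the algebra data --- is left at the level of ``define $U$ as the support locus and argue it is open and quasi-compact.'' This is precisely where the paper has to do real work, and the ingredients it uses are nowhere visible in your sketch. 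Concretely, the paper (a) first proves a Hopkins--Neeman--style lemma: over a Noetherian discrete $R$, \emph{any} localization $\calA$ of $R$ is a filtered colimit $\colim \calO_{U_s}$ over quasi-compact opens, which already packages openness in the classification of localizing subcategories of $D(R)$; compactness of $\calA$ then forces $\calA\simeq\calO_{U_s}$ for a single $s$; (b) reduces a general discrete $R$ to the Noetherian case by writing $R=\colim R_i$ as a filtered colimit of finitely generated $\Z$-algebras and using that $\CAlg^c(R)\simeq\colim\CAlg^c(R_i)$; (c) handles a general connective $E_\infty$-ring $R$ by descending along the Postnikov tower $R=\lim \tau_{\leq n}R$ using the vanishing of the cotangent complex $L_{\calA/\calO_X}$ and a deformation-theoretic lemma (\autoref{lem:DefThyFormallyEtale}), together with the fact that an eventually connective complex is Postnikov-convergent. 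Your sketch does not contain substitutes for (a), (b), or (c), so it cannot yet be said to prove the converse; you have localized the difficulty correctly but not overcome it. In particular, your intuition that bounded-above-ness ``rules out Matlis-type localizations'' is on the right track, but the paper uses this hypothesis in two quite specific technical places (Postnikov convergence, and the eventually-connective condition in the deformation lemma), not as a support argument.
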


We emphasize here that $\calA$ is required to be compact as a commutative algebra, and not a complex, in $D(X)$. For a concrete example, consider $X = \A^2$, and let $j:U \to X$ be the complement of $0$. Then $j_* \calO_U$ is extremely large as a complex (the first cohomology group is infinite dimensional). Nevertheless, \autoref{thm:QuasiAffineChar} implies that $j_* \calO_U$ is compact as an algebra over $\calO_X$; an explicit presentation is given in \autoref{ex:PresentationQCOpen}, and can also be found in \cite[Proposition 8.9]{MathewResidueClass}.

\subsection{Relation to existing work}

This paper builds on the ideas of \cite{bhatt2014algebraization} and \cite{DAGVIII}. During the preparation of this paper, Hall and Rydh released the preprint \cite{hall2014coherent}. The methods of proof are different, and the main results are different in that \cite{hall2014coherent} deals with classical stacks and symmetric monoidal abelian categories, whereas below we work in the more general context of spectral algebraic stacks and symmetric monoidal stable $\infty$-categories. The results for Noetherian classical stacks approximately imply one another.\footnote{There are two peculiar differences: in \cite{hall2014coherent} they are able to treat Noetherian stacks with affine stabilizers (slightly weaker than our quasi-affine diagonal hypothesis), and our results classify maps from arbitrary affine schemes and hence arbitrary prestacks (whereas \cite{hall2014coherent} only classifies maps out of locally Noetherian stacks in the quasi-affine diagonal case).} There is also recent work of Sch\"{a}ppi \cite{SchappiConstructingColimits,SchappiDescentTD} interpolating between the two: he establishes algebraization and Tannaka duality statements for possibly non-noetherian stacks admitting enough vector bundles without passing to the derived world.

\subsection{Acknowledgements} We would like to thank Aise Johan de Jong, Jacob Lurie, Akhil Mathew, and David Rydh for helpful conversations regarding this work. The first author was supported by NSF grants DMS 1340424 and DMS 1128155 and the Institute for Advanced Study, while the second author was supported by an NSF postdoctoral fellowship and the Institute for Advanced Study.

\subsection{Notation and conventions}

Throughout this paper, we work in the context of spectral algebraic geometry. A prestack is a presheaf of $\infty$-groupoids on the $\infty$-category of $E_\infty$-rings, and stacks are prestacks which satisfy \'{e}tale descent. We adopt the foundations of \cite{LurieHA}, and for the most part we conform with the notation there and in the DAG papers. In particular, we write $\Perf(X)$ and $\APerf(X)$ for the $\infty$-categories of perfect and almost perfect (i.e., pseudo-coherent) complexes on a stack $X$. One notable exception is that we use $D(X)$ to denote the $\infty$-category of quasi-coherent complexes on a prestack $X$, instead of the notation $\QCoh$, which we reserve for the abelian category of quasi-coherent sheaves. We also use cohomological grading conventions rather than homological ones. We use the notation $\Func(-,-)$ to denote functors preserving finite colimits, and $\Func_\otimes(-,-)$ for symmetric monoidal functors preserving finite colimits.

\section{A purely algebraic description of quasi-affine morphisms}
\label{sec:QuasiAffine}

All stacks in this section are fpqc-stacks on the category of connective $E_\infty$-rings. We will study stacks of the following sort:

\begin{definition}
	A stack $X$ is {\em an fpqc-algebraic stack} if there is a faithfully flat qcqs map $\Spec(A) \to X$ where $A$ is a connective $E_\infty$-ring. 
\end{definition}

	Let $\QAff_{/X}$ be the $\infty$-category of quasi-affine maps $U \to X$, and let $\Op_{qc}(X) \subset \QAff_{/X}$ be the full subcategory spanned by quasi-compact open subsets of $X$; for $U \in \QAff_{/X}$, we write $\calO_U \in \CAlg(D(X))$ for the pushforward of the structure sheaf of $U$. Our goal is to describe $\QAff_{/X}$ and $\Op_{qc}(X)$ in terms of $\CAlg(D(X))$. For this, we make the following definition:

\begin{definition}
	Let $\calC$ be a presentable symmetric monoidal $\infty$-category where the tensor product is compatible with colimits. Given $\calA \in \CAlg(\calC)$, and a commutative $\calA$-algebra $\calB \in \CAlg(\calC)_{\calA/}$, we call $\calB$ a {\em localization of $\calA$} if the multiplication map induces $\calB \otimes_{\calA} \calB \simeq \calB$; if additionally $\calB$ is compact in $\CAlg(\calC)_{\calA/}$, then we say $\calB$ is a {\em compact-localization of $\calA$}. 
\end{definition}

The main result is that these notions capture geometric behaviour. We equip $\CAlg(D(X))$ and $\QAff^{\opp}_{/X}$ with the cocartesian symmetric monoidal structure \cite[Construction 2.4.3.1]{LurieHA}.
\begin{theorem}
	\label{thm:QAchar}
	Say $X$ is an fpqc-algebraic stack. Then:
	\begin{enumerate}
		\item (Lurie\footnote{In fact this holds for any prestack such that $D(X)$ is presentable.}) The association $U \mapsto \calO_U$ defines a fully faithful symmetric monoidal functor $\QAff_{/X}^\opp \to \CAlg(D(X))$.
		\item The essential image of $\QAff_{/X}^\opp \to \CAlg(D(X))$ is given by bounded above $\calA \in \CAlg(D(X))$ which are compact-localizations of some  connective $\calA' \in \CAlg(D(X))$.
		\item The essential image of $\Op_{qc}(X)^\opp \to \CAlg(D(X))$ is given by bounded above $\calA \in \CAlg(D(X))$ which are compact-localizations of $\calO_X$.
	\end{enumerate}
\end{theorem}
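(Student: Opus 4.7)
Part (1) is due to Lurie, so I take it as given. The plan is to deduce (2) from (3) by factoring quasi-affine morphisms through their affine hulls, and then to prove (3) by reducing to the affine case. For the reduction of (2) to (3), recall that any quasi-affine $f \colon U \to X$ admits a canonical factorization $U \xrightarrow{j} \bar U \xrightarrow{\bar f} X$ where $\bar U = \Spec_X(f_*\calO_U)$ is the affine hull of $f$ and $j$ is a quasi-compact open immersion. By (1) in the affine case, affine $X$-schemes correspond exactly to connective $\calA' \in \CAlg(D(X))$; since $\bar f$ is affine, $\bar f_*$ induces an equivalence $\CAlg(D(\bar U)) \simeq \CAlg(D(X))_{\calA'/}$ sending compact-localizations of $\calO_{\bar U}$ to compact-localizations of $\calA'$. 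Hence (3) applied on $\bar U$ yields (2) on $X$.

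For (3), both the algebraic condition (bounded-above compact-localization of $\calO_X$) and the geometric conclusion ($\calA \simeq j_*\calO_U$ for some $U \in \Op_{qc}(X)$) are local on $X$ in the fpqc topology, so we may assume $X = \Spec A$ for a connective $E_\infty$-ring $A$. The easy direction is to show that $j_*\calO_U$ is a bounded-above compact-localization of $A$: writing $U = \bigcup_{i=1}^n D(f_i)$, the \v{C}ech complex for this cover presents $j_*\calO_U$ as a finite totalization of principal localizations $A[f_I^{-1}]$, which is bounded above of amplitude at most $n-1$; the property $j_*\calO_U \otimes_A j_*\calO_U \simeq j_*\calO_U$ follows from $U \times_{\Spec A} U \simeq U$ together with flat base change; and compactness as an $E_\infty$-algebra comes from an explicit finite cell presentation arising from the local-cohomology fiber sequence $\R\Gamma_{V(I)}(A) \to A \to j_*\calO_U$, as in \cite[Proposition 8.9]{MathewResidueClass}.

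For the harder direction, where the main work lies, one must show every bounded-above compact-localization $\calA$ of $A$ arises in this way. The localization condition $\calA \otimes_A \calA \simeq \calA$ exhibits $\Mod_\calA \hookrightarrow \Mod_A$ as a smashing localization, whose image consists of $A$-modules supported on a specialization-closed subset of $\Spec \pi_0 A$. I would then argue that compactness of $\calA$ as an $E_\infty$-algebra forces this support locus to be the complement of $V(f_1, \dots, f_n)$ for a finitely generated ideal: a finite cell presentation of $\calA$ (afforded by compactness) combined with the localization property should yield explicit generators, and the bounded-above hypothesis then pins $\calA$ down to agree with $j_*\calO_U$ for $U = \Spec A \setminus V(f_1, \dots, f_n)$ via the easy direction. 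The main obstacle is precisely this bridging step: translating compactness as an $E_\infty$-algebra over $A$ into the Zariski-level finiteness of a defining ideal on $\Spec \pi_0 A$. Once the ideal is in hand, the rest of the argument is a finite check using the explicit presentation from the easy direction.
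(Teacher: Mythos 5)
Your reduction of (2) to (3) via the affine hull $U \hookrightarrow \bar U \to X$ matches the paper's approach (the paper sets $\calA' := \tau^{\leq 0}\calA$, which is exactly the algebra of the affine hull), and your easy direction of (3)---\v{C}ech totalization for boundedness, flat base change for the localization property, and the fiber sequence $\R\underline{\Gamma}_Z(\calO_X) \to \calO_X \to \calO_U$ with an explicit Koszul-type presentation for compactness---is also the paper's argument.

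The hard direction of (3) is where your proposal has a genuine gap. You assert that the localization condition $\calA \otimes_A \calA \simeq \calA$ exhibits a smashing localization whose kernel (or image) is governed by a specialization-closed subset of $\Spec\pi_0 A$. That classification of smashing localizations by specialization-closed subsets is precisely the telescope conjecture, which Neeman proved for Noetherian rings but which is known to fail for general commutative rings, and is certainly not automatic for an arbitrary connective $E_\infty$-ring $A$. Nothing in your proposal puts you in the Noetherian world, so the support-theoretic description you want to invoke is simply unavailable at that stage. You have correctly identified the bridge you need (compactness of $\calA$ as an algebra $\Rightarrow$ finiteness of the defining ideal), but the proposed means of crossing it is not licensed.

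The paper crosses this bridge by two reductions that are entirely absent from your sketch. First, for a general connective $E_\infty$-ring $R$, it passes to the Postnikov tower $R_n = \tau_{\leq n}R$, solves the problem over $R_0 = \pi_0 R$, and lifts along square-zero extensions using the vanishing $L_{\calA/\calO_X} \simeq 0$ and the convergence $\calA \simeq \lim \calA_n$ for eventually connective complexes (this is where the eventual connectivity hypothesis is genuinely used). Second, and crucially, for discrete $R$ it writes $R = \colim R_i$ as a filtered union of finitely generated $\Z$-subalgebras and invokes $\CAlg^c(R) \simeq \colim \CAlg^c(R_i)$; compactness of $\calA$ then descends $\calA$ to a compact-localization of some Noetherian $R_i$. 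Only at that point does Hopkins--Neeman become available, giving $\calA \simeq \colim_s \calO_{U_s}$ as a pro-open, after which compactness of $\calA$ in $\CAlg(D(X))$ supplies a section $\calA \to \calO_{U_s}$ for some single $s$, whence $\calA \simeq \calO_{U_s}$ since both are epis under $\calO_X$. Without the Noetherian reduction your support argument is not justified, and without the Postnikov/deformation-theory step you have not addressed the derived case at all.
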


We say that $\calA \in \CAlg(D(X))$ is a {\em quasi-affine algebra} if it satisfies the condition in \autoref{thm:QAchar} (2) above, i.e., $\calA \simeq \calO_U$ for some $U \in \QAff_{/X}$; such an $\calA$ is a compact-localization of $\calA' := \tau^{\leq 0} \calA$.

\subsection{Proof of \autoref{thm:QAchar}}

\begin{lemma}
Let $X$ be a prestack such that $D(X)$ is presentable. Then the association $U \mapsto \calO_U$ extends to a fully faithful symmetric monoidal functor $\QAff^{\opp}_{/X} \to \CAlg(D(X))$.
\end{lemma}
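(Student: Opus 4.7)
The symmetric monoidal enhancement is essentially formal once fully faithfulness is established: the cocartesian monoidal structure on $\QAff^{\opp}_{/X}$ comes from fiber products in $\QAff_{/X}$, and these are sent to coproducts (tensor products) in $\CAlg(D(X))$ via quasi-affine base change applied to the structure sheaves. The main work is therefore to prove fully faithfulness of the functor $\Phi : \QAff^{\opp}_{/X} \to \CAlg(D(X))$, $U \mapsto f_* \calO_U$.

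The plan is to bootstrap from the affine case. Recall the spectral relative $\Spec$ construction: the restriction of $\Phi$ to $\Aff_{/X}^\opp$ is fully faithful with essential image the connective commutative algebras, and more strongly, for \emph{any} connective $\calA \in \CAlg(D(X))$ and \emph{any} prestack $V \to X$ with structure map $g$, one has the universal-property identification $\Map_X(V, \Spec_X \calA) \simeq \Map_{\CAlg(D(X))}(\calA, g_* \calO_V)$. Given a quasi-affine $f : U \to X$, I would use its canonical factorization $U \hookrightarrow \bar U \to X$ with $\bar U := \Spec_X(\tau^{\geq 0} f_* \calO_U)$ affine over $X$ and $j : U \hookrightarrow \bar U$ a quasi-compact open immersion; then $f_* \calO_U$ appears as the localization of $\bar f_* \calO_{\bar U} = \tau^{\geq 0} f_* \calO_U$ determined by $U \subset \bar U$.

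For fully faithfulness, fix $V \in \QAff_{/X}$ with structure map $g$, and consider the natural map $\Map_X(V, U) \to \Map_{\CAlg(D(X))}(f_* \calO_U, g_* \calO_V)$. The strategy is to realize it as the restriction of the analogous map with $U$ replaced by $\bar U$, which is an equivalence by the universal property of relative $\Spec$ recalled above. On the geometric side, the image of $\Map_X(V, U) \hookrightarrow \Map_X(V, \bar U)$ is carved out by those $\psi : V \to \bar U$ factoring through $j$, equivalently those for which $V \times_{\bar U} U \to V$ is an equivalence. On the algebraic side, the image of $\Map_{\CAlg(D(X))}(f_* \calO_U, g_* \calO_V) \hookrightarrow \Map_{\CAlg(D(X))}(\bar f_* \calO_{\bar U}, g_* \calO_V)$ is cut out by those $\alpha$ which invert the localization $\bar f_* \calO_{\bar U} \to f_* \calO_U$, i.e., for which $g_* \calO_V \otimes_{\bar f_* \calO_{\bar U}} f_* \calO_U \simeq g_* \calO_V$. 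The main step---and where the real technical content lies---is to match these two subspaces via a base-change identification of the form $g_* \calO_V \otimes_{\bar f_* \calO_{\bar U}} f_* \calO_U \simeq (g')_* \calO_{V \times_{\bar U} U}$ for $g' : V \times_{\bar U} U \to X$ the composite structure map. I expect the main obstacle to be precisely this base change along the open immersion $j$; since $j$ is flat and quasi-compact, it should reduce to standard flat (quasi-affine) base change for pushforward, after which the geometric and algebraic conditions coincide and fully faithfulness follows.
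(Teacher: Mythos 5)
Your skeleton for full faithfulness is essentially the standard affinization argument, i.e.\ a sketch of the proof of the result that the paper simply cites ([DAGVIII, Prop.\ 3.2.9]), whereas the paper's own proof spends its effort on the part you declare formal: the symmetric monoidal compatibility. That part is not quite formal. For cocartesian structures it amounts to showing that the canonical map $\calO_{Y_0}\otimes\calO_{Y_1}\to\calO_{Y_0\times_X Y_1}$ is an equivalence for quasi-affine $Y_0,Y_1\to X$, and quasi-affine base change alone does not give this: base change identifies $f_0^*f_{1*}\calO_{Y_1}$ with the pushforward of $\calO_{Y_0\times_XY_1}$ to $Y_0$, but to pass to the tensor product over $\calO_X$ you additionally need the projection formula for quasi-affine pushforward, or, as the paper does, a reduction to affine $X$ followed by a finite \v{C}ech computation (writing $\calO_{Y_i}$ as a finite limit of connective rings and using that $\otimes$ commutes with finite limits). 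As written, your plan asserts exactly the statement that needs proof.

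In the full-faithfulness sketch there are two genuine gaps. First, the lemma is stated for an arbitrary prestack $X$ with $D(X)$ presentable, and in that generality your factorization $U\hookrightarrow \bar U:=\Spec_X(\tau^{\leq 0}f_*\calO_U)$ (the connective cover; note the paper's cohomological conventions) is not known to be a quasi-compact open immersion: truncation in $D(X)$ does not commute with restriction along non-flat maps $T\to X$ from affines, so $\bar U\times_XT$ need not be the affinization of $U\times_XT$, and with this goes the claim that $\tau^{\leq 0}f_*\calO_U\to f_*\calO_U$ is the localization determined by $U\subset\bar U$ (which you also need in order to know that the algebra-side restriction map is a monomorphism). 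One must first reduce to an affine base, writing both mapping spaces as limits over affines and using base change for quasi-affine pushforward, or at least work flat-locally over an fpqc-algebraic $X$, where flat restriction is $t$-exact; this reduction is where much of the real work in the cited proof lives. Second, in the final matching of subspaces the nontrivial direction is that an equivalence $\calO_V\simeq j'_*\calO_{V\times_{\bar U}U}$ forces $V\times_{\bar U}U\to V$ to be an equivalence; this needs an argument (locally the fibre of $\calO_V\to j'_*\calO_{V'}$ is the local cohomology of $\calO_V$ along the closed complement, whose vanishing forces the complement to be empty), which your phrase ``the geometric and algebraic conditions coincide'' glosses over. With these points supplied your route does recover the lemma, but it is a re-proof of the cited Lurie result rather than a shortcut, while the one computation the paper actually carries out is the one you leave unproved.
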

\begin{proof}
If $X$ is a prestack such that $D(X)$ is presentable, then the proof of \cite[Proposition 3.2.9]{DAGVIII} shows that $\QAff^{\opp}_{/X} \to \CAlg(D(X))$ is a fully faithful embedding of $\infty$-categories. To show that it is symmetric monoidal with respect to the cocartesian symmetric monoidal structure, it suffice to show that for two quasi-affine maps $Y_0, Y_1 \to X$, the canonical map $\calO_{Y_0} \otimes \calO_{Y_1} \to \calO_{Y_0 \times_X Y_1}$ is an equivalence. Because pushforward along quasi-affine maps satisfies base change \cite[Corollary 3.2.6]{DAGVIII}, it suffices to verify that the map is an equivalence when $X$ is affine. The claim holds for affine spectral schemes because the Yoneda embedding preserves limits. The Cech complex for a finite open cover of $Y_i$ expresses $\calO_{Y_i}$ as a finite limit of the connective $E_\infty$ rings $\calO_{U_{i;\alpha}}$. As tensor products commute with finite limits, one can identify $\calO_{Y_0} \otimes \calO_{Y_1}$ with the Cech complex for the product open cover of $\calO_{Y_0 \times_X Y_1}$.
\end{proof}

\begin{lemma}
Let $X$ be an fpqc-algebraic stack. Fix $\calA \in \CAlg(D(X))$. Then the following are equivalent:
\begin{enumerate}
	\item The forgetful functor $D(X,\calA) \to D(X)$ is fully faithful, i.e. $\calA \otimes \bullet : D(X) \to D(X,\calA)$ is a localization. 
	\item The map $\calO_X \to \calA$ is an epimorphism. 
	\item For any $\calB \in \CAlg(D(X))$, the space $\Map(\calA,\calB)$ is $(-1)$-truncated.
	\item $\calA$ is a localization of $\calO_X$.
\end{enumerate}
Moreover, for such $\calA$, one has $L_{\calA/\calO_X} \simeq 0$.
\end{lemma}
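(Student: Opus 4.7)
The plan is to organize the four conditions into a tight cycle with the cotangent complex claim being a direct consequence, and to exploit the fact that $\calO_X$ is the unit (hence initial) object of $\CAlg(D(X))$.

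I would start with the equivalence $(2) \Leftrightarrow (4)$, which is essentially a definition-chase. In any $\infty$-category, $f : A \to B$ is an epimorphism iff the fold map $B \sqcup_A B \to B$ is an equivalence. Applied in $\CAlg(D(X))$ to the unit map $\calO_X \to \calA$, and using that the coproduct in $\CAlg(D(X))_{\calO_X/}$ is computed by relative tensor product over $\calO_X$ (which coincides with $\otimes$ since $\calO_X$ is the tensor unit), this says exactly that $\calA \otimes \calA \to \calA$ is an equivalence, i.e.\ condition (4).

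Next I would handle $(2) \Leftrightarrow (3)$. Because $\calO_X$ is initial in $\CAlg(D(X))$, the mapping space $\Map(\calO_X, \calB)$ is always contractible, so the epi condition \textemdash\ that $\Map(\calA, \calB) \to \Map(\calO_X, \calB)$ be $(-1)$-truncated for every $\calB$ \textemdash\ reduces literally to condition (3). For $(1) \Leftrightarrow (4)$, consider the free/forgetful adjunction $\calA \otimes (-) : D(X) \rightleftarrows D(X,\calA) : U$. By general nonsense, $U$ is fully faithful iff the counit $\calA \otimes M \to M$ is an equivalence for every $\calA$-module $M$; since $D(X,\calA)$ is generated under colimits by free modules $\calA \otimes N$, and on these the counit is $(\calA \otimes \calA) \otimes N \to \calA \otimes N$ induced by multiplication, this holds for all $M$ iff it holds on $\calA$ itself, which is (4). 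None of these steps should pose real difficulty; they are formal consequences of how epimorphisms, smashing localizations, and coproducts of algebras interact in a presentable symmetric monoidal stable $\infty$-category.

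For the vanishing $L_{\calA/\calO_X} \simeq 0$, I would use base change for the cotangent complex along the map $\calA \to \calA \otimes \calA$. Base change gives $(\calA \otimes \calA) \otimes_{\calA} L_{\calA/\calO_X} \simeq L_{(\calA\otimes\calA)/\calA}$. Under (4), the right side equals $L_{\calA/\calA} \simeq 0$, and the left side is computed along the equivalence $\calA \otimes \calA \simeq \calA$, so it is simply $L_{\calA/\calO_X}$ itself. Hence $L_{\calA/\calO_X} \simeq 0$.

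The main obstacle, if any, is bookkeeping: making sure the two $\calA$-module structures on $\calA \otimes L_{\calA/\calO_X}$ agree under the localization hypothesis (they do, precisely because $\calA \otimes \calA \simeq \calA$), and invoking the correct version of base-change for the cotangent complex in the spectral setting from \cite{LurieHA}. Everything else is formal manipulation of adjunctions and the universal property of the tensor product of $E_\infty$-algebras.
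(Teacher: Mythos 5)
Your proof is correct and takes essentially the same approach as the paper: the equivalences $(2)\Leftrightarrow(3)\Leftrightarrow(4)$ are the same formal chase, and your $(1)\Leftrightarrow(4)$ via the counit of the free/forgetful adjunction is just a rephrasing of the paper's route through $(2)$. The one genuine variation is in proving $L_{\calA/\calO_X}\simeq 0$: the paper invokes the Kunneth formula to get $L_{\calA/\calO_X}\simeq L_{\calA/\calO_X}\oplus L_{\calA/\calO_X}$ and concludes by an idempotence argument, whereas you use base change for the cotangent complex along the pushout square with both legs $\calO_X\to\calA$, identifying $L_{\calA/\calO_X}$ with $L_{(\calA\otimes\calA)/\calA}\simeq L_{\calA/\calA}\simeq 0$ under the multiplication equivalence; your version is arguably a touch cleaner as it avoids the (mildly delicate) step of extracting vanishing from the direct-sum identity, and both rest on the same bookkeeping point that the two coprojections $\calA\rightrightarrows\calA\otimes\calA$ become the identity under $\calA\otimes\calA\simeq\calA$.
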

\begin{proof}
	(1) $\Longleftrightarrow$ (2): (1) is equivalent to asking that the natural map $K \to K \otimes \calA$ is an equivalence for any $K \in D(X,\calA)$. This is equivalent to requiring that $\calA \simeq \calA \otimes \calA$ via the first coprojection map, i.e. as a left $\calA$-module, which is equivalent to (2). 
	
	(3) $\Longleftrightarrow$ (2): (3) is equivalent to asking that $\Map(\calA,-) \to \ast$ is a monomorphism of space-valued functors on $\CAlg(D(X))$ which is the definition of (2)
	
	(2) $\Longleftrightarrow$ (4): For any $\infty$-category $\calC$, a map $X \to Y$ in $\calC$ is an epimorphism if and only if the fold-map $Y \sqcup_X Y \to Y$ is an isomorphism. Applying this to $\calC := \CAlg(D(X))$ and noting that coproducts are given by tensor products then proves the claim.

	To see the claim about the cotangent complexes, one uses the Kunneth formula: if $\calO_X \to \calA$ is an epimorphism, then $L_{\calA/\calO_X} \simeq L_{\calA/\calO_X} \oplus L_{\calA/\calO_X}$ via the sum map, and hence $L_{\calA/\calO_X} \simeq 0$. 
\end{proof}

\begin{lemma}
	Let $X$ be an fpqc-algebraic stack. For $U \in \Op_{qc}(X)$, the algebra $\calO_U \in \CAlg(D(X))$ is a compact-localization of $\calO_X$ and eventually connective.
\end{lemma}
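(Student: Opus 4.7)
The plan is to verify three properties of $\calO_U$: localization, eventual connectivity, and compactness as an $\calO_X$-algebra. The first two are formal consequences of what is available. Localization follows from the symmetric monoidality of $U \mapsto \calO_U$ (established in the first lemma of this proof): since $U \hookrightarrow X$ is a monomorphism, $U \times_X U \simeq U$, whence $\calO_U \otimes_{\calO_X} \calO_U \simeq \calO_U$. Eventual connectivity is checked after base change along a faithfully flat qcqs atlas $p\colon \Spec(A) \to X$: base change for quasi-affine morphisms identifies $p^* \calO_U$ with $\calO_{U_A}$, where $U_A := U \times_X \Spec(A)$ is a quasi-compact open in $\Spec(A)$ and hence a finite union of basic opens $D(f_1), \ldots, D(f_n)$. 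The corresponding Cech complex presents $\calO_{U_A}$ as a finite totalization of connective $A$-algebras, placing it in $D^{\leq n-1}(\Spec(A))$; faithfully flat descent then yields the same cohomological bound for $\calO_U$ on $X$.

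Compactness is the substantive content. By the preceding lemma, $\Map_{\CAlg(D(X))_{\calO_X/}}(\calO_U, \calB)$ is the one-point space precisely when the multiplication $\calB \otimes_{\calO_X} \calO_U \to \calB$ is an equivalence, and empty otherwise; thus $\calO_U$ is compact iff, for every filtered diagram $\{\calB_j\}$ in $\CAlg(D(X))$, the colimit $\colim \calB_j$ satisfies this open condition exactly when some $\calB_j$ does. One direction is formal, since the condition is hereditary under maps of $\calO_X$-algebras: a map $\calO_X \to \calB_j \to \calB$ factors through $\calO_U$ as soon as $\calO_X \to \calB_j$ does. For the other direction we descend along $p$: since $p^*$ is symmetric monoidal, commutes with filtered colimits (as these are computed at the underlying level in $\CAlg$), and is conservative, the claim reduces to the affine case.

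In the affine setting $X = \Spec(R)$, $U = D(f_1) \cup \cdots \cup D(f_n)$, the projection formula identifies the condition $\calB \otimes_{\calO_X} \calO_U \simeq \calB$ with the classical statement that $(f_1, \ldots, f_n)$ generates the unit ideal in $\pi_0 \calB$. Since $\pi_0$ commutes with filtered colimits of $E_\infty$-rings, a relation $\sum g_i f_i = 1$ in $\pi_0(\colim_j \calB_j)$ involves only finitely many elements, each of which lifts to some $\pi_0 \calB_{j_0}$, and the equation itself then holds in $\pi_0 \calB_j$ for all sufficiently large $j$; hence some $\calB_j$ satisfies the open condition. The main obstacle, insofar as there is one, lies in the descent bookkeeping used to reduce the compactness claim to the affine setting; the essential combinatorial input is the elementary finitariness of unit-ideal relations.
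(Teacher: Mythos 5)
The localization and eventual connectivity parts of your argument are fine (the paper treats these as clear), and the reduction of the compactness question to the affine case along the atlas is in the same spirit as the paper's. The gap is in your affine-case criterion. You claim that for $\calB \in \CAlg(D(\Spec R))$ the condition $\calB \otimes \calO_U \simeq \calB$ is equivalent to $(f_1,\ldots,f_n)$ generating the unit ideal in $\pi_0\calB$, and then you conclude by lifting a finite unit-ideal relation to a finite stage of the colimit. That equivalence is only valid for \emph{connective} $\calB$ (where one can base change to residue fields of $\pi_0\calB$); it fails for general objects of $\CAlg(D(\Spec R))$, because a nonconnective $\calB$ admits no map to $\pi_0\calB$. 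Explicitly, take $R = k[x,y]$, $U = \A^2 \setminus \{0\}$, $f_1 = x$, $f_2 = y$, and $\calB = \calO_U$ itself: then $\calB \otimes \calO_U \simeq \calB$ (so $\Map(\calO_U,\calB)\neq\emptyset$, as it must be), yet $\pi_0\calB = k[x,y]$ and $(x,y)$ is not the unit ideal. Since compactness of $\calO_U$ in $\CAlg(D(X))$ must be tested against filtered systems of \emph{arbitrary} algebras --- and the algebras of genuine interest here, such as $\calO_U$, are typically nonconnective --- your key step (``the open condition on $\colim\calB_j$ yields a relation $\sum g_i f_i = 1$ in $\pi_0$ of the colimit'') does not get off the ground: the colimit can satisfy the open condition without any such relation existing. (A smaller slip: the relevant map is the unit $\calB \to \calB\otimes\calO_U$, not a ``multiplication'' $\calB\otimes\calO_U\to\calB$, which presupposes an $\calO_U$-algebra structure.)

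The paper avoids this by never passing to $\pi_0$. After reducing to $X = \Spec(A)$ affine, it rewrites the nonemptiness criterion as the vanishing $\underline{\Gamma}_Z(\calO_X)\otimes\calB \simeq 0$, which is equivalent to $K \otimes \calB \simeq 0$ for a perfect (Koszul-type) complex $K$ generating $D_Z(X)$. Compactness of $K$ as an object of $D(X)$ shows that if $K \otimes \colim\calA_i \simeq 0$ then the canonical map $K \to K\otimes\calA_i$ is null for $i \gg 0$, and the base-change adjunction plus Yoneda then forces $K\otimes\calA_i \simeq 0$ at that finite stage. This module-level argument is insensitive to connectivity and is the ingredient your proof is missing; your argument as written would need to be restricted to diagrams of connective algebras, which does not suffice to prove compactness in $\CAlg(D(X))$.
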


\begin{proof}
	All assertions except compactness are clear. Let $D_Z(X)$ denote the kernel of $D(X) \to D(U)$. Then descent shows that the fully faithful inclusion $D_Z(X) \to D(X)$ has a right adjoint $\underline{\Gamma}_Z:D(X) \to D_Z(X)$ defined via $\underline{\Gamma}_Z(K) := \ker(K \to K \otimes \calO_U)$. In particular, there is a triangle
	\[ \underline{\Gamma}_Z(\calO_X) \to \calO_X \to \calO_U. \]
	As $\calO_X \to \calO_U$ is a localization, the space $\Map(\calO_U,\calA)$ is $(-1)$-truncated for any $\calA \in \CAlg(D(X))$. In fact, $\Map(\calO_U,\calA)$ is contractible exactly when $\underline{\Gamma}_Z(\calO_X) \otimes \calA \simeq 0$: if $\underline{\Gamma}_Z(\calO_X) \otimes \calA \simeq  0$, then $\calA \simeq \calA \otimes \calO_U$ is an $\calO_U$-algebra, and conversely if $\calA$ is an $\calO_U$-algebra, then $\calA \otimes \underline{\Gamma}_Z(\calO_X) \simeq \calA \otimes_{\calO_U} \calO_U \otimes \underline{\Gamma}_Z(\calO_X) \simeq \calA \otimes_{\calO_U} 0 \simeq 0$. Now to check compactness of $\calO_U$, we must check: given a filtering system $\{\calA_i\}$ in $\CAlg(D(X))$, if $\Map(\calO_U,\colim \calA_i) \neq \emptyset$, then $\Map(\calO_U,\calA_i) \neq \emptyset$ for $i \gg 0$. Fix such a system $\{\calA_i\}$ with colimit $\calA := \colim \calA_i$, and a map $\calO_U \to \calA$; we want a map $\calO_U \to \calA_i$ for $i \gg 0$ (which then necessarily factors the original map $\calO_U \to \calA$, up to contractible ambiguity). Using the criterion for non-emptyness of $\Map(\calO_U,-)$, the assumption translates to $\underline{\Gamma}_Z(\calO_X) \otimes \calA \simeq 0$, and we want to show $\underline{\Gamma}_Z(\calO_X) \otimes \calA_i \simeq 0$ for $i \gg 0$. As the hypothesis of the last statement passes up along an fpqc cover, and the desired vanishing conclusion can be detected fpqc locally, we may assume $X = \Spec(A)$ is an affine scheme. In this case, there is a perfect complex $K \in D_\perf(X)$ such that $\langle K \rangle = D_Z(X)$. In particular, for any $L \in D(X)$, the vanishing of $\underline{\Gamma}_Z(\calO_X) \otimes L \simeq 0$ is equivalent to the vanishing of $K \otimes L$. We are thus reduced to showing: if $K \otimes \calA \simeq 0$, then $K \otimes \calA_i \simeq 0$ for $i \gg 0$; this comes from the next lemma.
\end{proof}

\begin{lemma}
	Let $X$ be an fpqc-algebraic stack. Fix some $K \in D_\perf(X)$ and a filtering system $\{\calA_i\}$ in $\CAlg(D(X))$. If $K \otimes \colim \calA_i \simeq 0$, then $K \otimes \calA_i \simeq 0$ for $i \gg 0$.
\end{lemma}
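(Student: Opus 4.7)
The plan is to reduce to the case of an affine base and then use the dualizability of $K$ to recast the vanishing condition in terms of units of $E_\infty$-algebras, a property well-behaved under filtered colimits of algebras.

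First, by fpqc descent one reduces to the case $X = \Spec(A)$: picking a faithfully flat qcqs cover $p : \Spec(A) \to X$ supplied by the fpqc-algebraicity hypothesis, $p^*$ is symmetric monoidal, preserves perfect complexes and filtered colimits, and vanishing of an object of $D(X)$ can be checked fpqc-locally. So we may assume $X = \Spec(A)$, $K \in D_\perf(A)$, and $\{\calA_i\} \subset \CAlg(\Mod_A)$.

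Set $V := K^\vee \otimes_A K$; this is perfect and carries the canonical $E_\infty$-algebra structure of $\End_A(K)$. The triangle identities of the duality between $K$ and $K^\vee$ exhibit $K$ as a retract of $K \otimes_A K^\vee \otimes_A K$, so for any $\calB \in \CAlg(\Mod_A)$ one has $K \otimes_A \calB \simeq 0$ if and only if $V \otimes_A \calB \simeq 0$ (the ``only if'' direction is immediate by tensoring with $K^\vee$; the ``if'' follows by tensoring the retract with $\calB$). Moreover $V \otimes_A \calB$ is an $E_\infty$-$\calB$-algebra whose unit $\calB \to V \otimes_A \calB$ is the base change of the coevaluation $A \to V$, and any $E_1$-algebra $R$ vanishes if and only if $1 = 0$ in $\pi_0 R$ (a null unit $A \to R$ forces $\id_R$ to be null via the unit axiom). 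Combining these,
\[ K \otimes_A \calB \simeq 0 \;\Longleftrightarrow\; 1 = 0 \text{ in } \pi_0(V \otimes_A \calB). \]

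Applying this criterion with $\calB := \colim_i \calA_i$, and using that the tensor product and $\pi_0$ each commute with filtered colimits of $A$-modules, we obtain $\pi_0(V \otimes_A \colim_i \calA_i) \simeq \colim_i \pi_0(V \otimes_A \calA_i)$ as a filtered colimit of ordinary commutative rings along unital ring maps (the transitions are induced from the $E_\infty$-ring maps $\calA_i \to \calA_j$). The hypothesis forces $1 = 0$ in the colimit, and since each transition preserves $1$, we conclude $1 = 0$ already in $\pi_0(V \otimes_A \calA_i)$ for some $i$, hence $V \otimes_A \calA_i \simeq 0$, hence $K \otimes_A \calA_i \simeq 0$ for all large $i$. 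The main conceptual ingredient is the dualizability-based reduction to a unit-vanishing condition; after that, the filtered colimit step is formal. The naive attempt of merely pushing the colimit inside the tensor product fails because filtered colimits of nonzero modules can vanish --- it is the algebra structure on the $\calA_i$, and in particular unit preservation of the transitions, that rescues the argument.
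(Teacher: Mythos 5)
Your argument is correct and reaches the same underlying mechanism as the paper's, but the packaging differs. After reducing to affine $X$, the paper uses compactness of $K$ to show that the unit map $\eta_i : K \to K \otimes \calA_i$ is nullhomotopic for $i \gg 0$, then invokes the base-change adjunction together with Yoneda to conclude $K \otimes \calA_i \simeq 0$. You instead reduce the question to the vanishing of $1 \in \pi_0(\End_A(K) \otimes_A \calA_i)$ and observe that, since $\pi_0$ and $\otimes$ commute with filtered colimits and the transition maps preserve $1$, the identity $1 = 0$ descends from the colimit to some finite stage. These are the same statement in disguise: under the canonical identification $\End_{\calA_i}(K \otimes_A \calA_i) \simeq \Map_A(K, K \otimes_A \calA_i)$ the class of $1$ is exactly the class of $\eta_i$, so ``$\eta_i$ is null'' is the same as ``$1 = 0$.'' Your ring-unit framing is arguably more self-contained in that it sidesteps the explicit appeal to the base-change adjunction. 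One terminological slip to fix: $\End_A(K) \simeq K^\vee \otimes_A K$ carries only an $E_1$- (associative) algebra structure, not an $E_\infty$-structure, and consequently $\pi_0(V \otimes_A \calA_i)$ is in general a noncommutative ring. This does not break the proof --- the retract argument only uses the module structure, and the criterion ``a unital ring is zero iff $1 = 0$ in $\pi_0$'' together with the passage of $1=0$ to a finite stage of a filtered colimit needs only unital $E_1$-structures and unital transition maps --- but the language should be corrected.
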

\begin{proof}
The assertion is fpqc local, so we may assume $X$ is an affine spectral scheme. Let $\calA := \colim \calA_i$. As $X$ is affine, the complex $K$ is compact in $D(X)$. The assumption $K \otimes \calA \simeq 0$ then implies that the canonical map $\eta_i:K \to K \otimes \calA_i$ is $0$ for $i \gg 0$. The base change isomorphism $\Map_{\calA_i}(K \otimes \calA_i,L) \simeq \Map(K,L)$ for $L \in D(X,\calA_i)$ is defined by composition with $\eta_i$, so it follows that $K \otimes \calA_i \simeq 0$ by Yoneda.
\end{proof}

\begin{example}
\label{ex:PresentationQCOpen}
	Let $X = \Spec(A)$ be a classical affine scheme. Let $I = (f_1,..,f_r) \subset A$ be an ideal, and let $Z = V(I) \subset X$ with complement $U = X - Z$. Then $\calO_U$ can be described explicitly as a compact object via the following recipe: Let $K$ be the Koszul complex on the $f_i$'s, so there is a natural map $\calO_X \to K$ in $\CAlg(D(X))$; here we use that $K$ has a commutative algebra structure as $A$ is classical (via simplicial commutative rings, for example). By duality, this gives a map $\eta:K^\vee \to \calO_X$ in $D(X)$, and hence one has two maps $a,b:\Sym(K^\vee) \to \calO_X$ in $\CAlg(D(X))$ induced by $\eta$ and $0$ on $K^\vee$ respectively. We claim that the compact object $\calA := \calO_X \otimes_{\Sym(K^\vee)} \calO_X \in \CAlg(D(X))$, where the two structure maps $\Sym(K^\vee) \to \calO_X$ are $a$ and $b$, realizes $\calO_U$. To see this, we first show that for any $\calB \in \CAlg(D(X))$, the space $\Map(\calA,\calB)$ is either empty or contractible. Assuming this space is non-empty, fix a map $\calA \to \calB$. By construction, this data gives a nullhomotopy of the composite $K^\vee \to \calB$, and hence a nullhomotopy of the structure map $\calO_X \to K \otimes \calB$; the latter map is an $\calO_X$-algebra map, so the ring structure implies that $K \otimes \calB \simeq 0$. Since $K \otimes \calB \simeq 0$, the space $\Map(\Sym(K^\vee),\calB)$ is contractible, and hence so is $\Map(\calA,\calB) \simeq \Omega \Map(\Sym(K^\vee),\calB))$. This proves that $\calO_X \to \calA$ is an epimorphism, so it is enough to find maps $\mu:\calA \to \calO_U$ and $\nu:\calO_U \to \calA$ in $\CAlg(D(X))$. The existence of $\mu$ follows from the contractibility of $\Map(\Sym(K^\vee),\calO_U)$ (by the same reasoning as above). To get $\nu$, it is enough to show that $\calA \otimes \underline{\Gamma}_Z(\calO_X) \simeq 0$; this is a consequence of $K \otimes \calA \simeq 0$, which itself follows from the reasoning above.
\end{example}

\begin{lemma}[Hopkins-Neeman]
Let $X$ be a classical Noetherian scheme. Let $\calA \in \CAlg(D(X))$ be a localization of $\calO_X$.  Then $\calA \simeq \colim \calO_{U_s}$ for some co-filtered system of open subsets $\{U_s\}$.
\end{lemma}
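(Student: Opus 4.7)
The plan is to use the Hopkins--Neeman--Thomason classification to match $\calA$ with a specialization-closed subset $Z \subseteq |X|$, and then realize $\calA$ as a filtered colimit of $\calO_{U_s}$ by exhausting $Z$ through its closed subsets. First, I set $\calK := \{K \in D(X) : K \otimes \calA \simeq 0\}$; since $\calO_X \to \calA$ is a localization, $\calK$ is a localizing tensor-ideal of $D(X)$, and $\calA \otimes -$ is the associated smashing localization. By Hopkins--Neeman--Thomason---for classical Noetherian schemes, extended from $D_\perf(X)$ to the big category $D(X)$ via Neeman's chromatic-tower argument in the affine case and Zariski descent in general---there is a unique specialization-closed subset $Z \subseteq |X|$ with $\calK \cap D_\perf(X) = D_Z^{\perf}(X)$, and $\calK$ is the localizing closure of this.

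Next, using that $X$ is Noetherian, I write $Z = \bigcup_s Z_s$ as a filtered union of closed subsets $Z_s \subseteq X$ (for instance, taking $\{Z_s\}$ to be all closed subsets of $X$ contained in $Z$). Setting $U_s := X \setminus Z_s$ gives a cofiltered diagram of quasi-compact opens, and I define $\calB := \colim_s \calO_{U_s}$. The identity $\calO_{U_s} \otimes \calO_{U_t} \simeq \calO_{U_s \cap U_t}$, combined with the cofinality of the diagonal in a filtered category, yields $\calB \otimes \calB \simeq \calB$, so $\calB$ is itself a localization of $\calO_X$.

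It then remains to compare $\calA$ and $\calB$ via their kernels. For a perfect $K$ with $\mathrm{supp}(K) \subseteq Z_s$, one has $K \otimes \calO_{U_t} \simeq 0$ for all $t \geq s$, so $K \otimes \calB \simeq 0$. Conversely, for perfect $K$ with $K \otimes \calB \simeq 0$, the stalk at any point $p \in X \setminus Z = \bigcap_s U_s$ computes as $K_p \simeq (K \otimes \calB)_p \simeq 0$, forcing $\mathrm{supp}(K) \subseteq Z$. Hence $\ker(-\otimes \calB) \cap D_\perf(X) = D_Z^{\perf}(X) = \calK \cap D_\perf(X)$, and since both $\ker(-\otimes\calB)$ and $\calK$ are the localizing closure of their perfect parts, they coincide. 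Two smashing localizations of $\calO_X$ with the same kernel determine the same reflective subcategory of $D(X)$ and are therefore canonically equivalent, giving $\calA \simeq \calB = \colim_s \calO_{U_s}$.

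The step I expect to be the main obstacle is the invocation of the classification theorem in the big category $D(X)$: Hopkins--Neeman proper classifies only thick subcategories of $D_\perf(X)$, and the passage to localizing subcategories of $D(X)$---equivalently, the statement that every smashing localization is generated by its perfect kernel---is Neeman's generalization of the telescope conjecture for Noetherian schemes, which is immediate in the affine case and globalized by patching along a finite affine open cover.
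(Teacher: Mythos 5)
Your proof is correct and takes essentially the same approach as the paper: both rely on Neeman's extension of the Hopkins--Neeman theorem (that the kernel of a smashing localization of $D(X)$ for Noetherian $X$ is generated by its perfect objects), build a cofiltered system $\{U_s\}$ of opens out of supports of perfect complexes in that kernel, and conclude by matching the kernels of $-\otimes\calA$ and $-\otimes\colim\calO_{U_s}$. The only cosmetic difference is that you route the comparison through the explicit specialization-closed subset $Z$ and a stalk computation, whereas the paper picks a generating set of perfect complexes directly and uses their compactness to show the reverse containment of kernels.
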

\begin{proof}
	Because $\calA$ is a localization of $\calO_X$, the kernel, $\calC$, of the base change $F:D(X) \to D(X,\calA)$ consists of those complexes, $M$, for which $\calA \otimes M \simeq 0$. Thus $\calC$ is closed under arbitrary colimits. By the Hopkins-Neeman theorem, we have $\calC = \langle \calC \cap D_\perf(X) \rangle$, i.e., $\calC$ is generated by a set $S$ of perfect complexes $K_s \in \calC$. Let $Z_s$ be the support of $K_s$, and let $U_s := X - Z_s$. Because $K_s$ generates $D_{Z_s}(X)$ \cite{thomason1990higher}, we may discard multiple indices which have the same $U_s$, and because $K_s \otimes K_{s'} \in \calC$, we may enlarge the index set to a sub-poset of the lattice of open subschemes which is closed under intersection.

	The fact that $\calO_{U_s} \otimes \calO_{U_{s'}} \simeq \calO_{U_s \cap U_{s'}}$ implies that if we let $\calO_U := \colim \calO_{U_s}$, then $\calO_U \otimes \calO_{U_s} \simeq \calO_U$ and thus $\calO_U \otimes \calO_U \simeq \calO_U$. Let $\calC'$ be the kernel of $D(X) \to D(X,\calO_U)$.  As $K_s \otimes \calO_{U_{s'}} \simeq 0$ for any $U_{s'} \subset U_s$, we have $K_s \in \calC'$, and so $\calC \subset \calC'$. Moreover, note that $\calC' = \langle \calC' \cap D_\perf(X) \rangle$ by the Hopkins-Neeman theorem. If $L \in \calC' \cap D_\perf(X)$, then $L \otimes \calO_U \simeq 0$. By compactness of $L$, it follows that the canonical map $L \to L \otimes \calO_{U_s}$ is null-homotopic for some $s$ and thus $L \otimes \calO_{U_s} \simeq 0$, so $L \in \langle K_s \rangle \subset \calC$. Hence, $\calC' \subset \calC$, and thus $\calC = \calC'$. Passing the quotients shows that the identity on $D(X)$ restricts to a symmetric monoidal equivalence $D(X,\calA) = D(X,\calO_U)$, and hence $\calA \simeq \calO_U$.
\end{proof}

\begin{lemma}
Let $X$ be an fpqc-algebraic stack. Let $\calA \in \CAlg(D(X))$ be a compact-localization of $\calO_X$. If $\calA$ is eventually connective, then $\calA \simeq \calO_U$ for some quasi-compact open subset $U \subset X$.
\end{lemma}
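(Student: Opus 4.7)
The plan is to reduce the statement to the classical Noetherian affine case, where the preceding Hopkins--Neeman lemma applies directly. There are two descent steps. First I would reduce to $X = \Spec A$ affine by choosing a faithfully flat qcqs cover $p \colon Y = \Spec A \to X$. The pullback $p^*\calA$ is again an eventually connective localization of $\calO_Y$ (the condition $\calA \otimes_{\calO_X} \calA \simeq \calA$ and eventual connectivity are preserved by the symmetric monoidal flat pullback $p^*$). If we can show $p^*\calA \simeq \calO_V$ for some qc open $V \subset Y$, then $V$ is invariant under the two projections $Y \times_X Y \rightrightarrows Y$ (both pullbacks realize the double pullback of $\calA$ from $X$), so $V$ descends to a qc open $U \subset X$ by fpqc descent for open immersions; the full faithfulness of $\QAff^{\opp}_{/X} \to \CAlg(D(X))$ established earlier then forces $\calA \simeq \calO_U$.

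On $X = \Spec A$, the second descent step spreads out $\calA$ to the classical Noetherian case. Compactness of $\calA$ in $\CAlg(D(X))_{\calO_X/} \simeq \CAlg_A$ means $\calA$ is a retract of a finite colimit in $\CAlg_A$ of free algebras $\Sym_A(A^n)$, so there is a finite type $\Z$-subalgebra $A_0 \subset \pi_0 A$ and a compact $\calA_0 \in \CAlg_{A_0}$ with $\calA \simeq \calA_0 \otimes_{A_0} A$. After enlarging $A_0$ if necessary, one can arrange that $\calA_0$ itself is a localization and is eventually connective, since both are finitely presented conditions visible on the finite-colimit presentation of $\calA_0$. Applying the preceding Hopkins--Neeman lemma to $X_0 = \Spec A_0$ then yields $\calA_0 \simeq \colim_s \calO_{U_{0,s}}$ for a cofiltered system of qc opens $U_{0,s} \subset X_0$. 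Compactness of $\calA_0$ forces the identity map on $\calA_0$ to factor through some $\calO_{U_{0,t}}$; since mapping spaces between any two localizations of $\calO_{X_0}$ are $(-1)$-truncated, the composites of $\calA_0 \to \calO_{U_{0,t}} \to \calA_0$ and of the natural map $\calO_{U_{0,t}} \to \calA_0$ in the reverse order are forced to be the identities, so $\calA_0 \simeq \calO_{U_{0,t}}$. Pulling $U_{0,t}$ back along $\Spec A \to \Spec A_0$ produces the required qc open $V \subset \Spec A$ with $p^*\calA \simeq \calO_V$.

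The main obstacle is the spreading-out step: one must verify that compactness of $\calA$ as an $E_\infty$-algebra over $A$ really does descend to compactness of some $\calA_0$ over a finite-type subring $A_0 \subset \pi_0 A$, and that after enlarging $A_0$ sufficiently the localization identity $\calA_0 \otimes_{A_0} \calA_0 \simeq \calA_0$ and eventual connectivity both pass to $\calA_0$. An alternative approach that bypasses noetherian approximation is to argue directly in the spirit of \autoref{ex:PresentationQCOpen}: extract from the compact presentation of $\calA$ a perfect complex $K$ on $X$ such that $\calA$ can be expressed as $\calO_X \otimes_{\Sym(K^\vee)} \calO_X$, and then identify this with $\calO_U$ for $U = X \setminus \mathrm{supp}(K)$ by repeating the computation already carried out in the example.
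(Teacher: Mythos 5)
There is a genuine gap at the spreading-out step, and it is precisely the point where eventual connectivity has to enter. After reducing to $X = \Spec A$ with $A$ a connective $E_\infty$-ring, you approximate by writing $\calA \simeq \calA_0 \otimes_{A_0} A$ for a finite type $\Z$-subalgebra $A_0 \subset \pi_0 A$. This only makes sense when $A$ is discrete: for a general connective $E_\infty$-ring the structure map goes $A \to \pi_0 A$, so there is no map of $E_\infty$-rings $A_0 \to A$ along which to base change, and hence no way to present $A$ as a filtered colimit of classical finite type $\Z$-algebras. One could instead write $A$ as a filtered colimit of compact $E_\infty$-rings, but those approximants are not classical Noetherian rings, so the Hopkins--Neeman input (which is stated and proved only for classical Noetherian schemes) does not apply to them. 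Your argument thus proves the lemma only when $A$ is discrete; note also that eventual connectivity of $\calA$ plays essentially no role in your proof, whereas the remark following the lemma shows the statement is false without it once $X$ is non-classical, so some step must genuinely use it.

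The paper's proof runs your Noetherian-approximation argument verbatim in the discrete case (approximate, apply Hopkins--Neeman to get $\calA \simeq \colim \calO_{U_s}$, use compactness to get a map $\calA \to \calO_{U_s}$, and conclude since all maps in sight are epimorphisms), but then handles general connective $A$ by a separate deformation-theoretic argument: it passes to the Postnikov tower $R_n = \tau_{\leq n}R$, solves the problem over $\pi_0 R$ by the discrete case, lifts the open subset up the tower, and uses $L_{\calA/\calO_X} \simeq 0$ together with \autoref{lem:DefThyFormallyEtale} to produce compatible identifications $\calA_n \simeq \calO_{U_n}$; eventual connectivity is then used twice, in the hypotheses of \autoref{lem:DefThyFormallyEtale} and to ensure convergence $\calA \simeq \lim \calA_n$ and $\calO_U \simeq \lim \calO_{U_n}$. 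Your proposed fallback via \autoref{ex:PresentationQCOpen} does not close the gap either: that example constructs a presentation $\calO_X \otimes_{\Sym(K^\vee)}\calO_X$ for the structure sheaf of a known open complement over a classical affine base (using the Koszul algebra structure, which needs $A$ classical), whereas here one would have to extract such a perfect complex $K$ from an abstract compact localization, which is essentially the statement being proved.
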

\begin{proof}
Both $\calA$ and $\calO_U$ are localizations of $X$. Hence, the (fpqc) sheaf of commutative $\calO_X$-algebra isomorphisms between them is either $\emptyset$ or a point. We wish to show it is the latter, which can be checked fpqc-locally, so we may assume $X = \Spec(R)$ for a connective $E_\infty$-ring $R$. Write $A = \Gamma(X,\calA)$, so $A$ is a compact-localization of $R$.  

 We first give the argument when $R$ is discrete.  In this case, we can write $R$ as the filtered colimit $\colim R_i$ of all its finitely generated $\Z$-subalgebras. Let $\CAlg^c(R)$ denote the $\infty$-category of compact objects in $\CAlg(R)$. By \cite[Lemma 7.3.5.16]{LurieHA} and \cite[Lemma 6.24]{MathewGaloisGroup}, one has $\CAlg^c(R) \simeq \colim \CAlg^c(R_i)$, so $A \simeq A_i \otimes_{R_i} R$ for some $A_i \in \CAlg^c(R_i)$. Moreover, by the same argument and increasing $i$ if necessary, we may also assume that $A_i$ is a compact-localization of $R_i$. Replacing $R$ with $R_i$ and $A$ with $A_i$, we may thus assume $R$ is a noetherian ring. In this case, the previous lemma shows that $\calA \simeq \calO_U$ for a pro-(quasi-compact open) subset $U \subset X$. Write $U := \lim U_s$ as a cofiltered limit of open subsets $U_s \subset X$, so $\calO_U \simeq \colim \calO_{U_s}$. In particular, we have $\calO_X$-algebra maps $\calO_{U_s} \to \calA$ for all $s$. By compactness of $\calA$, we also obtain a map $\calA \to \calO_{U_s}$ of $\calO_X$-algebras for some $s$. All these maps are epimorphisms, so $\calA \simeq \calO_{U_s}$.

For a general $E_\infty$-ring $R$, we proceed by deformation-theory. Let $R_n := \tau_{\leq n} R$ be the $n$-th Postnikov truncation of $R$, so $R \simeq \lim R_n$; write $X_n := \Spec(R_n)$ for the corresponding affine spectral scheme, and use a subscript of $n$ to indicate base change along $R \to R_n$. Then $\calA$ defines compact-localizations in $\calA_n$ of $\calO_{X_n}$ for each $n$. For $n = 0$, the previous paragraph gives a quasi-compact open subscheme $U_0 \subset X_0$ such that $\calA_0 \simeq \calO_{U_0}$ in $\CAlg(D(X_0))$. Using the fact that $|X|=|X_n|=|X_0|$, we have unique open subschemes $U_n \subset X_n$ and $U \subset X$ lifting $U_0$. Now $L_{\calA/\calO_X} \simeq 0$ and $L_{\calO_U/\calO_X} \simeq 0$, and $\calO_{U_n}$ is eventually connective $\forall n$, so \autoref{lem:DefThyFormallyEtale} below provides a compatible system of isomorphisms $\calA_n \simeq \calO_{U_n}$. It remains to note that both $\calA$ and $\calO_U$ are convergent, i.e., $\calA \simeq \lim \calA_n$, and $\calO_U \simeq \lim \calO_{U_n}$; indeed, this is true for any eventually connective complex on $X$. 
\end{proof}

\begin{remark}
	The assumption that $\calA$ is eventually connective is necessary unless $X$ is classical. For an example, fix some base ring $k$, and consider $R = \Sym_k(k[2])$. Inverting the generator $u \in \pi_2(R)$ gives a commutative compact $R$-algebra $S$ such that $S \otimes_R S \simeq S$. In fact, we may realize $S$ as the pushout of 
	\[ \Sym_R(R[-2]) \stackrel{uv - 1}{\gets} \Sym_R(R[0]) \stackrel{\epsilon}{\to} R\]
	where $v \in \pi_{-2}(\Sym_R(R[-2]))$ is the image of $1 \in \pi_{-2}(R[-2])$ under the canonical map $R[-2] \to \Sym_R(R[-2])$, $uv - 1$ is the map on algebras induced by a map $R \to \Sym_R(R[-2])$ of $R$-complexes classifying the element $uv - 1$ in $\pi_0(\Sym_R(R[-2]))$, and $\epsilon$ is the augmentation.
\end{remark}

\begin{lemma}
	\label{lem:DefThyFormallyEtale}
	For any connective $E_\infty$-ring $A$, let $F(A)$ denote the $\infty$-category of eventually connective commutative $A$-algebras $B$ such that $L_{B/A} \simeq 0$. If $f:\widetilde{A} \to A$ is a square-zero extension of connective $E_\infty$-rings, then $f$ induces an equivalence $F(\widetilde{A}) \simeq F(A)$.
\end{lemma}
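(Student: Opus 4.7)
The plan is to use Lurie's deformation theory for commutative $E_\infty$-algebras along square-zero extensions, as developed in \cite[Chapter 7]{LurieHA}. Realize $f: \widetilde{A} \to A$ as a fiber sequence $M \to \widetilde{A} \to A$ with $M$ a connective $A$-module (with $M \cdot M = 0$), classified by a derivation $d: L_A \to M[1]$. The main input is that, for any commutative $A$-algebra $B$, the space of lifts to a commutative $\widetilde{A}$-algebra $\widetilde{B}$ equipped with an equivalence $\widetilde{B} \otimes_{\widetilde{A}} A \simeq B$ is controlled by $L_{B/A}$: the obstruction to existence lies in $\Map_B(L_{B/A}, (M \otimes_A B)[1])$, and when this obstruction vanishes the space of lifts is a torsor under $\Map_B(L_{B/A}, M \otimes_A B)$. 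Analogously, given two lifts $\widetilde{B}_1, \widetilde{B}_2$, the fiber of $\Map_{\CAlg(\widetilde{A})}(\widetilde{B}_1, \widetilde{B}_2) \to \Map_{\CAlg(A)}(B_1, B_2)$ over a map $\phi: B_1 \to B_2$ is controlled by similar mapping spaces out of $L_{B_1/A}$ into $M \otimes_A B_2$.

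For essential surjectivity, take $B \in F(A)$: since $L_{B/A} \simeq 0$, the obstruction space and the space of lifts are both contractible, producing a canonical $\widetilde{B} \in \CAlg(\widetilde{A})$. It remains to verify that $\widetilde{B} \in F(\widetilde{A})$. Eventual connectivity is easy: tensoring $M \to \widetilde{A} \to A$ with $\widetilde{B}$ over $\widetilde{A}$ gives a fiber sequence $M \otimes_A B \to \widetilde{B} \to B$ whose outer terms are eventually connective (since $B$ is eventually connective and $M$ is connective). For the vanishing $L_{\widetilde{B}/\widetilde{A}} \simeq 0$, the base-change formula for cotangent complexes yields $L_{\widetilde{B}/\widetilde{A}} \otimes_{\widetilde{B}} B \simeq L_{B/A} \simeq 0$; to upgrade this to the vanishing of $L_{\widetilde{B}/\widetilde{A}}$ itself, I would use that the kernel $I := \ker(\widetilde{B} \to B) \simeq M \otimes_A B$ is square-zero in $\widetilde{B}$, so its $\widetilde{B}$-action factors through $B$. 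Consequently, for any $\widetilde{B}$-module $N$, one has $N \otimes_{\widetilde{B}} I \simeq (N \otimes_{\widetilde{B}} B) \otimes_B I$; combined with the fiber sequence $N \otimes_{\widetilde{B}} I \to N \to N \otimes_{\widetilde{B}} B$, vanishing of $N \otimes_{\widetilde{B}} B$ forces $N \simeq 0$, and applying this to $N = L_{\widetilde{B}/\widetilde{A}}$ completes the verification.

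Fully faithfulness follows from the analogous obstruction-theoretic description of mapping spaces: the torsor under $\Map_{B_1}(L_{B_1/A}, M \otimes_A B_2)$ and the obstruction in $\Map_{B_1}(L_{B_1/A}, (M \otimes_A B_2)[1])$ are both contractible as $L_{B_1/A} \simeq 0$, so $\Map_{\CAlg(\widetilde{A})}(\widetilde{B}_1, \widetilde{B}_2) \to \Map_{\CAlg(A)}(B_1, B_2)$ is an equivalence. The main technical point that I expect to require care is the nilpotence argument propagating $L_{B/A} \simeq 0$ to $L_{\widetilde{B}/\widetilde{A}} \simeq 0$; the remaining deformation-theoretic inputs are direct applications of the standard machinery for square-zero extensions, with no essential use of the eventual connectivity hypothesis beyond ensuring that $\widetilde{B}$ lands back in $F(\widetilde{A})$.
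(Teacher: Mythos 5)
Your overall strategy is the same as the paper's: construct $\widetilde{B}$ by lifting the classifying derivation, and then verify that the result lies in $F(\widetilde{A})$. The nilpotence argument you supply for passing from $L_{\widetilde{B}/\widetilde{A}} \otimes_{\widetilde{B}} B \simeq 0$ to $L_{\widetilde{B}/\widetilde{A}} \simeq 0$ (using that the square-zero ideal $I$ is a $B$-module, so that $N \otimes_{\widetilde{B}} I \simeq (N \otimes_{\widetilde{B}} B) \otimes_B I$) is correct and is a clean way to handle that step.

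However, there is a genuine gap, and it is exactly the one your final paragraph waves away. You invoke ``Lurie's deformation theory'' to produce a $\widetilde{B}$ and you then treat the identification $\widetilde{B} \otimes_{\widetilde{A}} A \simeq B$ as automatic --- you use it immediately when writing the fiber sequence $M \otimes_A B \to \widetilde{B} \to B$, and again implicitly when applying base change to get $L_{\widetilde{B}/\widetilde{A}} \otimes_{\widetilde{B}} B \simeq L_{B/A}$. But what the derivation-lifting formalism actually hands you is a square-zero extension $\widetilde{B} \to B$ by $M_B$ equipped with a compatible map $\widetilde{A} \to \widetilde{B}$. It does \emph{not} come for free that the base change $B' := \widetilde{B} \otimes_{\widetilde{A}} A$ maps isomorphically to $B$: the square-zero extension $\widetilde{B}$ is a pullback, and pullbacks do not commute with $(-)\otimes_{\widetilde{A}} A$ in general. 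Verifying $B' \simeq B$ is precisely the content of the cofiber diagram chase in the paper's proof, and this is where the eventual connectivity hypothesis enters in an essential way: the chase produces $\coker(c) \simeq \coker(c) \otimes_A M[1]$ with $M$ connective, and one kills $\coker(c)$ by iterating and using that everything in sight is bounded above. Without eventual connectivity this can fail. So your closing claim --- that eventual connectivity is only needed to show ``$\widetilde{B}$ lands back in $F(\widetilde{A})$'' and that ``the remaining deformation-theoretic inputs are direct applications of the standard machinery'' --- misplaces where the real work is: the hypothesis is needed to know that the constructed $\widetilde{B}$ is a lift of $B$ at all, and that verification is the heart of the proof rather than a standard citation.
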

\begin{proof}
		We sketch a proof. The full faithfulness of $F(\widetilde{A}) \to F(A)$ can be checked directly using the vanishing of the cotangent complex; this does not require the connectivity assumption on $B$. For essential surjectivity, fix some $A \to B$ in $F(A)$. The square-zero extension $\widetilde{A} \to A$ has connective kernel $M$, and is classified by an $A$-module map $L_A \to M[1]$. By the transitivity triangle for $A \to B$ and the assumption that $L_{B/A} \simeq 0$, this gives a $B$-module map $L_B \to M_B[1]$, which corresponds to a square-zero extension\footnote{The map $L_B \to M_B[1]$ may be viewed as a section $B \to B \oplus M_B[1]$, and $\tilde{B}$ is defined as the fiber product of this and the universal section $B \to B \oplus M_B[1]$. The canonical map of $E_\infty$-rings $\tilde{A} \to \tilde{B}$, as well as the commutative square of $E_\infty$-ring maps shown, is induced by the commutativity of the diagram $$\xymatrix{A \ar[r]^-{triv} \ar[d] & A \oplus M[1] \ar[d] & & A \ar[d] \ar[ll]_-{L_A \to M[1]} \\ B \ar[r]^-{triv} & B \oplus M_B[1] && B \ar[ll]_-{L_B \to M_B[1]}}.$$ } $\tilde{B} \to B$ fitting into the following commutative diagram of $\tilde{A}$-modules
	\[ \xymatrix{ M \ar[r] \ar[d] & \widetilde{A} \ar[r] \ar[d] & A \ar[d] \\
	M_B \ar[r] & \widetilde{B} \ar[r] & B}\]
with exact rows, where the right hand square underlies a commutative square of $E_\infty$-ring maps. We must check that $\widetilde{A} \to \widetilde{B}$ lies in $F(\widetilde{A})$. For this, by base change for cotangent complexes, it is enough to check that the canonical $A$-algebra map $B' := \widetilde{B} \otimes_{\widetilde{A}} A \to B$ is an isomorphism. This map fits into a diagram
	\[ \xymatrix{ M \ar[r] \ar[d] & \widetilde{A} \ar[r] \ar[d] & A \ar[d] \\
	M_{B'}\simeq M \otimes_{\widetilde{A}} \widetilde{B} \ar[d]^a \ar[r] & \widetilde{B} \ar[d]^b \ar[r] & B' \ar[d]^c  \\
	M_B \ar[r] & \widetilde{B} \ar[r] & B,}\]
	where all rows are exact, and the second row is obtained by extending scalars from the first row along $\widetilde{A} \to \widetilde{B}$. The map $b$ is the identity, while $a$ is identified with $\id_M \otimes c$. Staring at cofibres shows $\coker(a)[1] \simeq \coker(c)$. Using the identification of $a$ with $\id_M \otimes c$ shows $\coker(a) \simeq M \otimes \coker(c)$. Thus, we obtain $\coker(c) \simeq \coker(c) \otimes_A M[1]$. As $M$ is connective, this formally implies that $\coker(c) \simeq 0$ as everything in sight is eventually connective; thus, $B' \simeq B$, as wanted.
\end{proof}

\subsection{Consequences of \autoref{thm:QAchar}}

Recall that a map $\Spec(T) \to \Spec(R)$ of affine (connective) spectral schemes is said to be finitely presented if $T$ is compact as an $R$-algebra. A quasi-affine map $V \to \Spec(R)$ is locally finitely presented if every affine open in $V$ is finitely presented over $\Spec(R)$; the general case is defined via base change.

\begin{proposition} \label{prop:compact_algebras}
Let $X$ be an fpqc-algebraic stack. A quasi-affine algebra $\calA \in \CAlg(D(X))$ is compact if and only if the corresponding quasi-affine morphism $V \to X$ is locally finitely presented.
\end{proposition}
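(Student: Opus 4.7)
The plan is to reduce to $X=\Spec R$ affine via fpqc descent---both ``$V\to X$ is locally finitely presented'' and ``$\calA$ is compact in $\CAlg(D(X))$'' are fpqc-local on $X$---and then handle the two implications separately.

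For the direction $(\Rightarrow)$, fix any affine open $U=\Spec S\hookrightarrow V$. Applying \autoref{thm:QAchar}(3) with $V$ in the role of $X$ exhibits $S=\calO_U$ as a compact-localization of $\calA$, i.e., $S$ is compact in $\CAlg_\calA$. The transitivity principle---if $\calA$ is compact in $\CAlg_R$ and $S$ is compact in $\CAlg_\calA$, then $S$ is compact in $\CAlg_R$, proved by fibering $\Map_R(S,-)$ over $\Map_R(\calA,-)$ and applying compactness on both base and fiber---then yields $S$ compact in $\CAlg_R$, so $U\to X$ is finitely presented. Letting $U$ range over all affine opens of $V$ gives $V\to X$ lfp.

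For the direction $(\Leftarrow)$, fix a finite affine cover $V=\bigcup_{i=1}^n U_i$ with $U_i=\Spec S_i$ and each $S_i$ compact in $\CAlg_R$. For every nonempty $I\subseteq\{1,\ldots,n\}$, the intersection $U_I$ is a quasi-compact open in the affine $U_{i_0}$ for any $i_0\in I$, and \autoref{ex:PresentationQCOpen} provides an explicit finite pushout description of $\calO_{U_I}$ as a compact $S_{i_0}$-algebra, hence also compact in $\CAlg_R$ by the transitivity above. Zariski descent exhibits $\calA=\calO_V$ as the totalization of the Cech cosimplicial diagram of the $\calO_{U_I}$'s; since the cover is finite, only finitely many multi-indices contribute nontrivially, reducing this totalization to a finite limit in $\CAlg_R$ whose transition maps are all compact-localizations (by \autoref{ex:PresentationQCOpen} again, applied to the various qc opens in the affines $U_{i_0}$).

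The main obstacle is the last step: verifying that this finite limit of compact $R$-algebras is itself compact in $\CAlg_R$. Arbitrary finite limits in $\CAlg_R$ need not preserve compactness, so the argument must leverage the compact-localization structure of the transition maps. Proceeding by induction on $n$, the claim reduces to the key assertion that a pullback $A\times_C B$ of compact $R$-algebras along two compact-localizations $A\to C$ and $B\to C$ is again compact; this can be attacked by re-expressing the pullback as a finite pushout, using the vanishing $L_{C/A}\simeq 0$ characteristic of a compact-localization together with the explicit pushout formulas of \autoref{ex:PresentationQCOpen}.
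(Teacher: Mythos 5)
Your forward direction (compact $\Rightarrow$ locally finitely presented) is correct and is essentially the paper's own argument: for an affine open $U\subset V$ you use $D(V)\simeq D(X,\calA)$ and \autoref{thm:QAchar} to see that $\calO_U$ is compact as an $\calA$-algebra, and then the transitivity lemma for compactness (which the paper proves right before the proposition) to conclude compactness over $\calO_X$. The reduction to affine $X$ is also the same move the paper makes. (One small misattribution: for compactness of $\calO_{U_I}$ over $S_{i_0}$ you cite \autoref{ex:PresentationQCOpen}, but that example assumes a classical base ring; the statement you actually need is the earlier lemma that $\calO_U$ is a compact-localization of $\calO_X$ for any quasi-compact open $U$ of an fpqc-algebraic stack, which applies to spectral affines.)

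The genuine gap is in the converse, which is the substantive half of the proposition. You reduce it to the claim that the finite \v{C}ech limit $\calO_V=\lim_{\emptyset\neq I}\calO_{U_I}$ of compact $R$-algebras along compact-localizations is compact, and ultimately to the assertion that $A\times_C B$ is compact when $A,B$ are compact $R$-algebras and $A\to C$, $B\to C$ are compact-localizations --- but you do not prove this, and you flag it yourself as ``the main obstacle.'' This is not a routine step: compactness is a condition on mapping \emph{out}, and $\Map_R(A\times_C B,-)$ does not decompose in terms of $\Map_R(A,-)$, $\Map_R(B,-)$, $\Map_R(C,-)$, so finite limits of compact algebras need not be compact; the proposed fix (``re-express the pullback as a finite pushout'' via $L_{C/A}\simeq 0$) is not carried out and it is unclear how it would go. Worse, in the geometric situation the pullback in question is exactly $\calO_{U_1\cup U_2}$ for a union of two finitely presented affine opens, i.e.\ an instance of the very statement being proved, so your induction risks circularity unless a genuinely new argument is supplied there. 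The paper avoids this difficulty by a relative approximation argument instead: it realizes $V$ as a quasi-compact open in $\Spec(T)$ with $T\simeq\colim T_i$ a filtered colimit of compact $R$-algebras, descends $V$ to quasi-compact opens $V_i\subset\Spec(T_i)$ with $V\simeq\lim V_i$ affine over each $V_i$, notes each $\calO_{V_i}$ is compact over $R$ (qc-open lemma plus transitivity), and then checks compactness of $\calO_V$ over $\calO_{V_i}$ locally on $V_i$, where all schemes become affine and finitely presented, so transitivity of compactness concludes. To repair your proof you would need either to adopt such an approximation step or to give an actual proof of your pullback-compactness claim.
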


\begin{lemma}
Let $X$ be an fpqc-algebraic stack. Fix maps $A \to B \to C$ in $\CAlg(D(X))$. 
\begin{enumerate}
	\item If $B$ is $A$-compact, and $C$ is $B$-compact, then $C$ is $A$-compact.
	\item If $B$ and $C$ are $A$-compact, then $C$ is $B$-compact.
\end{enumerate}
\end{lemma}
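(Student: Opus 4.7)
The plan: Both assertions are formal facts about compactness in slice $\infty$-categories of $\calC := \CAlg(D(X))$, with essentially no geometric content beyond the presentability of $D(X)$. I will write $\Map_{A/}(-,-)$ for mapping spaces in $\calC_{A/}$ and rely on two standard observations. First, the forgetful functor $\calC_{B/} \to \calC_{A/}$ preserves filtered colimits (in fact all connected colimits), so filtered colimits in the $B$-slice are computed in the $A$-slice. Second, the mapping space $\Map_{B/}(E,F)$ is naturally identified with the homotopy fiber of $\Map_{A/}(E,F) \to \Map_{A/}(B,F)$ taken over the structure map $B \to F$.

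For part (2), I start with a filtered diagram $\{D_i\}$ in $\calC_{B/}$ with colimit $D$; by the preliminary remark, $D = \colim D_i$ in $\calC_{A/}$ as well. The fiber sequence above gives
\[ \Map_{B/}(C,D) = \fib_{B \to D}\bigl(\Map_{A/}(C,D) \to \Map_{A/}(B,D)\bigr).\]
Applying the $A$-compactness of both $B$ and $C$ rewrites the two mapping spaces on the right as $\colim_i \Map_{A/}(C,D_i)$ and $\colim_i \Map_{A/}(B,D_i)$ respectively. Since filtered colimits of spaces commute with finite limits, the fiber distributes across the colimit to produce $\colim_i \Map_{B/}(C,D_i)$, as desired.

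For part (1), I let $\{D_i\}$ be a filtered diagram in $\calC_{A/}$ with colimit $D$ and analyze the map $p \colon \Map_{A/}(C,D) \to \Map_{A/}(B,D)$ given by precomposition with $B \to C$. Its straightening sends a point $\phi \colon B \to D$ to the fiber $\Map_{B/}(C, D_\phi)$, where $D_\phi$ denotes $D$ equipped with the $B$-algebra structure $\phi$. The $A$-compactness of $B$ factors any such $\phi$ through some $\phi_{i_0} \colon B \to D_{i_0}$, after which $D_\phi$ is exhibited as the filtered colimit of $\{(D_i, \phi_i)\}_{i \geq i_0}$ in $\calC_{B/}$. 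The $B$-compactness of $C$ then identifies the fiber with $\colim_{i \geq i_0} \Map_{B/}(C, (D_i)_{\phi_i})$. Combining this fiberwise calculation with the base identification $\Map_{A/}(B,D) = \colim_i \Map_{A/}(B,D_i)$, and swapping the resulting double colimit via a cofinality argument, rewrites $\Map_{A/}(C,D)$ as $\colim_i \Map_{A/}(C,D_i)$.

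The main obstacle is the bookkeeping in part (1), specifically the interchange of the colimit over the base $\Map_{A/}(B,D)$ with the fiberwise colimits in $\calC_{B/}$; once one records carefully how a map $\phi \colon B \to D$ lifts to $D_{i_0}$ and produces compatible $B$-algebra structures on the tail of the diagram, the interchange is routine. Both parts could alternatively be extracted as general categorical facts about compact objects in slice $\infty$-categories, independent of the specific choice $\calC = \CAlg(D(X))$.
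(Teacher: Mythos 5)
Your argument is correct and matches the paper's proof essentially step for step: both parts hinge on the fiber-sequence identification $\Map_{B/}(E,F) \simeq \fib\bigl(\Map_{A/}(E,F) \to \Map_{A/}(B,F)\bigr)$ together with the commutation of filtered colimits with finite limits of spaces, applied to the same commutative squares the paper writes down. The only difference is cosmetic: you phrase part (1) in the language of straightening and an explicit double-colimit interchange, where the paper reindexes to make $i_0$ initial and compares fibers directly, but the underlying manipulation is identical.
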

\begin{proof}
For (1), fix a filtering system $\{D_i\}$ of $A$-algebras with colimit $D$. Then we have a commutative square
	\[ \xymatrix{ \colim \Map_A(C,D_i) \ar[r]^f \ar[d] & \colim \Map_A(B,D_i) \ar[d]^{\simeq} \\
			\Map_A(C,D) \ar[r]^g & \Map_A(B,D), }\]
			where the equality on the right uses that $A$-compactness of $B$. Fix a point $p \in \colim \Map_A(B,D_i)$ represented by a map $B \to D_{i_0}$ for some $i_0$. After reindexing, we may assume $i_0$ is initial, so $\{D_i\}$ may be viewed as a diagram of $B$-algebras with colimit $D$. The fibre $f^{-1}(p)$ is given by $\colim \Map_B(C,D_i)$ by exactness of filtered colimits. The fibre $g^{-1}(p)$ is $\Map_B(C,D)$, so it is enough to check that the natural map $\colim \Map_B(C,D_i) \to \Map_B(C,D)$ is an equivalence, which follows from the $B$-compactness of $C$.

			For (2), fix a filtered diagram $\{D_i\}$ of $B$-algebras with colimit $D$. We must check $\colim \Map_B(C,D_i) \simeq \Map_B(C,D)$. Consider the commutative square
			\[ \xymatrix{ \colim \Map_A(C,D_i)  \ar[r] \ar[d]^\simeq & \colim \Map_A(B,D_i) \ar[d]^\simeq \\ 
					\Map_A(C,D) \ar[r] & \Map_A(B,D), }\]
					where the vertical maps are equivalences due to the assumption on $B$ and $C$ respectively. The claim now follows by comparing the fibre of the two horizontal maps over the given point $p \in \Map_A(B,D)$, and using the exactness of filtered colimits.
\end{proof}

\begin{proof}[Proof of \autoref{prop:compact_algebras}]
	Assume first that $V \to X$ is a locally finitely presented quasi-affine morphism. We must show $\calA := \calO_V$ is compact in $\CAlg(D(X))$. This claim is local on $X$, so we may assume $X = \Spec(R)$ for some connective $E_\infty$-ring $R$. Then $V$ is given by a quasi-compact open subset of some $\Spec(T) \to \Spec(R)$ for some $T \in \CAlg_{R/}$. We can write $T$ as a filtered colimit $T \simeq \colim T_i$ with each $T_i \in \CAlg_{R/}$ compact. After possibly replacing the filtering system $\{T_i\}$ with a cofinal subset, we may assume that $V$ comes via pullback from a pullback-compatible system of quasi-compact open subsets $V_i \subset \Spec(T_i)$ for all $i$. In particular, the transition maps $V_i \to V_j$ are affine, and $V \simeq \lim V_i$. As each $\calO_{V_i}$ is compact in $\CAlg(D(X))$ (by transitivity of compactness), it is enough to check that $\calO_V$ is compact as a $\calO_{V_i}$-algebra for some $i$. This assertion can be checked locally on $\calO_{V_i}$, so we reduce to the case where $V_i$ is affine. Then $V$ is also affine, and the claim follows from both $V$ and $V_i$ being finitely presented over $X$.
	
	Conversely, fix a quasi-affine algebra $\calA \in \CAlg(D(X))$ that is compact. Then $\calA = \calO_V$ for some $V \to X$ quasi-affine. To show $V \to X$ is locally finitey presented, we may assume $X$ is affine. For every quasi-compact open subset $W \subset V$, the algebra $\calO_W$ is compact over $\calA$ and thus over $\calO_X$ as well. In particular, if $W$ is itself affine, then $W \to X$ is finitely presented, which proves the claim.
\end{proof}

\begin{remark}
Applying \autoref{prop:compact_algebras} when $X = \Spec(k)$ for some base field provides many examples of compact algebras in $D(\Spec(k))$ which violate the naive expectation that finitely presented algebras should have finite dimensional homology, as is true for finitely presented connective algebras. The question of which stacks have compact algebras of global sections, $\Gamma(X,\calO_X)$, is a subtle one. For instance, we have seen that $\Gamma(\A^2-\{0\},\calO_{\A^2-\{0\}})$ is compact, but $\Gamma(\P^2-\{0\},\calO_{\P^2-\{0\}})$ is the free $E_\infty$-algebra on $\Gamma_{\{0\}}(\A^2,\calO_{\A^2})$, which is an infinite dimensional vector space in degree -1, so this algebra is not compact.
\end{remark}

\begin{corollary}
	Let $X$ be an fpqc-algebraic stack. A quasi-affine algebra $\calA \in \CAlg(D(X))$ comes from an \'etale quasi-affine map $V \to X$ if and only if $\calA$ is compact in $\CAlg(D(X))$ and $L_{\calA/\calO_X} \simeq 0$.
\end{corollary}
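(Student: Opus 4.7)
The plan is to combine \autoref{thm:QAchar} and \autoref{prop:compact_algebras} with the infinitesimal characterization of étale maps. Writing $\calA = \calO_V$ for the quasi-affine morphism $f:V \to X$ supplied by \autoref{thm:QAchar}, compactness of $\calA$ in $\CAlg(D(X))$ is equivalent to $f$ being locally finitely presented by \autoref{prop:compact_algebras}. Since a morphism is étale precisely when it is locally finitely presented and has vanishing cotangent complex, the task reduces to establishing the equivalence
\[ L_{\calA/\calO_X} \simeq 0 \quad \Longleftrightarrow \quad L_{V/X} \simeq 0. \]

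To do this, I would factor $f$ using the structure of quasi-affine algebras: set $\calA' := \tau^{\leq 0}\calA$, so that $\calA$ is a compact-localization of the connective algebra $\calA'$ (as observed immediately after \autoref{thm:QAchar}). Then $V' := \Spec_X(\calA')$ is affine over $X$, and $V \hookrightarrow V'$ is a quasi-compact open immersion. Since open immersions are formally étale, $L_{V/V'} \simeq 0$, and the transitivity sequence for $V \to V' \to X$ gives $L_{V/X} \simeq L_{V'/X}|_V$. For the affine morphism $V' \to X$, the pushforward of $L_{V'/X}$ to $X$ is $L_{\calA'/\calO_X}$ regarded as an $\calA'$-module in $D(X)$, and restriction along the open immersion $V \hookrightarrow V'$ corresponds to base change along $\calA' \to \calA$; hence $L_{V/X}$ is represented by $L_{\calA'/\calO_X} \otimes_{\calA'} \calA$ in $D(X)$.

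Finally, I would invoke the transitivity triangle for $\calO_X \to \calA' \to \calA$,
\[ L_{\calA'/\calO_X} \otimes_{\calA'} \calA \longrightarrow L_{\calA/\calO_X} \longrightarrow L_{\calA/\calA'}. \]
Because $\calA' \to \calA$ is a localization, the Künneth-style argument already recorded in this section (for epimorphisms $\calO_X \to \calA$ in $\CAlg(D(X))$) yields $L_{\calA/\calA'} \simeq 0$. Thus $L_{\calA/\calO_X} \simeq L_{\calA'/\calO_X} \otimes_{\calA'} \calA$, and under the identification of the previous paragraph this matches $L_{V/X}$. Both directions of the corollary follow. The only mildly nontrivial ingredient is the factorization $V \hookrightarrow V' \to X$, which trades the general quasi-affine situation for an affine piece composed with an open immersion; after that, everything is standard cotangent-complex bookkeeping, and I do not anticipate a serious obstacle.
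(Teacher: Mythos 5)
Your proposal is correct and follows essentially the same route as the paper: compactness is traded for local finite presentation via \autoref{prop:compact_algebras}, and \'etaleness is then detected by the vanishing of the cotangent complex, with $L_{V/X}$ identified with $L_{\calA/\calO_X}$ under $D(V)\simeq D(X,\calO_V)$. The only difference is that the paper simply asserts this last identification, whereas you justify it by factoring $V\hookrightarrow \Spec_X(\tau^{\leq 0}\calA)\to X$ and running the transitivity triangles, which is a fine (and slightly more self-contained) elaboration of the same step.
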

\begin{proof}
	We already know that compact quasi-affine algebras in $D(X)$ correspond to locally finitely presented quasi-affine maps $V \to X$. It is then enough to note that such a map is \'etale if and only if $L_{V/X} \simeq 0$, and that $L_{V/X}$ corresponds to $L_{\calO_V/\calO_X}$ under the equivalence $D(V) \simeq D(X,\calO_V)$.
\end{proof}

\begin{remark}
	The material in this section suggests it might be worthwhile to investigate the following class of maps: say a map $f:X \to Y$ of schemes (or stacks) is {\em weakly quasi-affine} if $f_*$ establishes an equivalence $D(X) \simeq D(Y, f_* \calO_X)$. It is clear that quasi-affine maps belong to this class. More generally, one can show that any map $f$ that is affine over a constructible stratification of the target is also weakly quasi-affine. This leads one to ask: is there a geometric classification of weakly quasi-affine maps? Do the Tannaka duality results in this article extend to stacks whose diagonal is only assumed to be weakly quasi-affine?
\end{remark}

\section{tannakian stacks}

\begin{definition} \label{defn:tannakian}
For an arbitrary prestack, $X$, we say that $X$ is \emph{tannakian} if
\[ \Map(S,X) \to \Fun^L_{\otimes}(D(X),D(S))\]
is fully faithful with essential image given by functors preserving almost perfect complexes and connective complexes.
\end{definition}

\begin{remark}
If $X$ is an algebraic stack, then an equivalent definition of tannakian is that $\Map(S,X) \to \Fun^L_\otimes(D(X)^{cn},D(S)^{cn})$ is fully faithful with essential image given by functors preserving almost perfect complexes: Because $D(X)$ is right $t$-complete, $\Fun^L_{\otimes}(D(X)^{cn},D(S)^{cn})$ can be identified with the full subcategory of $\Fun^L_\otimes(D(X),D(S))$ consisting of functors which are right $t$-exact \cite[Lemma 5.4.6]{DAGXII}. This also identifies the subcategories of functors which preserve almost perfect complexes.
\end{remark}

\begin{remark}
We will see below that for a Noetherian stack which is tannakian, $\Map(S,X)$ is also equivalent to the $\Func_\otimes (\APerf(X)^{cn},\APerf(S)^{cn})$ of symmetric monoidal functors which preserve finite colimits.
\end{remark}

Any prestack is a colimit of affine prestacks, so to show that a prestack is tannakian, it suffices to consider affine $S$ in the definition above. Proposition 3.3.11 of \cite{DAGVIII} states that when $X$ is quasi-geometric, then $\Map(S,X) \to \Fun^L_{\otimes}(D(X),D(S))$ is fully faithful. Thus for quasi-geometric stacks the goal is to describe the essential image of this map. Given a cocontinuous symmetric monoidal functor $F : D(X) \to D(S)$, we will say that $F$ is \emph{geometric} if it corresponds to a map $S \to X$.

\begin{proposition} \label{prop:covering}
Let $X$ be a quasi-compact quasi-geometric algebraic stack. A cocontinuous symmetric monoidal functor $F : D(X) \to D(S)$ is geometric if and only if there are fpqc quasi-affine maps $U \to X$ and $V \to S$, with $U$ affine, and a map of $\calO_S$-algebras $\calO_V \to F(\calO_U)$.
\end{proposition}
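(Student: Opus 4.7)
Given $F = g^*$ for some $g : S \to X$, choose any fpqc quasi-affine chart $u : U \to X$ with $U$ affine (such $u$ exists because $X$ is quasi-compact and quasi-geometric). Set $V := S \times_X U$ with projection $v : V \to S$; then $v$ is fpqc quasi-affine by stability of these classes under base change, and Corollary 3.2.6 of \cite{DAGVIII} (base change for pushforward along quasi-affine morphisms) produces a canonical isomorphism $\phi : \calO_V \simeq g^*(u_* 1_U) = F(\calO_U)$ in $\CAlg(D(S))$.

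\textbf{Reverse direction.} Given the data $(F, U, V, \phi)$, the plan is to construct $g : S \to X$ by working simplicially through the \v{C}ech nerve $U^{\bullet+1/X}$ of $u$. Since $F$ is symmetric monoidal, $F(\calO_{U^{n+1/X}}) \simeq F(\calO_U)^{\otimes_{\calO_S}(n+1)}$ in $\CAlg(D(S))$. The main preparatory step is to use $\phi$ together with \autoref{thm:QAchar} to show that $F(\calO_U)$ is a quasi-affine $\calO_S$-algebra, producing a quasi-affine spectral $S$-scheme $V_0 := \Spec_S F(\calO_U)$; the \v{C}ech nerve $V_\bullet^{/S}$ of $V_0 \to S$ then satisfies $\calO_{V_n} \simeq F(\calO_{U^{n+1/X}})$. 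Via \autoref{thm:QAchar} again, $\phi$ corresponds to a morphism $V_0 \to V$ of quasi-affine $S$-schemes, and combining this factorization with the hypothesis that $V \to S$ is fpqc lets one deduce that $V_0 \to S$ is itself fpqc.

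Since $U = \Spec(A)$ is affine, $F$ induces an $E_\infty$-ring map
\[ A = \Gamma(X, \calO_U) \longrightarrow \Gamma(S, F(\calO_U)) = \Gamma(V_0, \calO_{V_0}), \]
yielding a map $h : V_0 \to U$; set $\tilde g := u \circ h : V_0 \to X$. The two pullbacks of $\tilde g$ to $V_0 \times_S V_0 \simeq V_1$ agree because both are induced, via the symmetric monoidality of $F$, by the $E_\infty$-ring map $\Gamma(X, \calO_{U \times_X U}) \to \Gamma(V_1, \calO_{V_1})$. Fpqc descent for maps into the stack $X$ along the fpqc cover $V_0 \to S$ then produces $g : S \to X$, and $F \simeq g^*$ follows from Lurie's full-faithfulness theorem \cite[Prop.~3.3.11]{DAGVIII}.

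\textbf{Main obstacle.} The central technical point will be showing that $F(\calO_U)$ is a quasi-affine algebra on $S$ given only the algebra map $\phi : \calO_V \to F(\calO_U)$. The two defining properties from \autoref{thm:QAchar} --- bounded above and compact-localization of a connective algebra --- must be transferred from $\calO_V$ to $F(\calO_U)$ along $\phi$, using the cocontinuous symmetric monoidal structure of $F$; the explicit Koszul-style presentations of quasi-affine algebras illustrated in \autoref{ex:PresentationQCOpen} should be the key tool, since they allow one to lift an algebraic presentation of $\calO_V$ through $F$ and $\phi$. Once this step is in hand, the remaining pieces (the fpqc property of $V_0 \to S$, the descent, and the identification of $F$ with $g^*$) are routine applications of the machinery already developed.
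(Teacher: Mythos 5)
Your forward direction is fine, but the reverse direction hinges on a step that cannot be carried out, and it is in any case a different (and harder) route than the one the proposition is designed for. Your ``main obstacle'' --- deducing from the algebra map $\phi$ alone that $F(\calO_U)$ is a quasi-affine $\calO_S$-algebra --- is not attainable: with the arrow as printed one may take $V = S$ and $\phi$ the unit map $\calO_S \to F(\calO_U)$, so the hypothesis carries no information about $F(\calO_U)$ at all (this also shows the printed arrow must be the wrong way around: the paper's own argument, and its later use in the proof of \autoref{thm:td_Noetherian} where $V$ is the classical truncation of $F(U)$, require a map of $\calO_S$-algebras $F(\calO_U) \to \calO_V$). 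Even with the corrected arrow, quasi-affineness of $F(\calO_U)$ is a \emph{consequence} of geometricity, not something one can establish first; the whole point of \autoref{prop:covering} is to avoid proving anything about $F(\calO_U)$ itself (that is the hard step, carried out later under compact-generation or Noetherian hypotheses) and instead to descend along the \emph{given} cover $V \to S$. Concretely, the paper uses the algebra map to produce, at each level of the \v{C}ech nerves, a symmetric monoidal functor $D(U_n) \simeq D(X,\calO_{U_n}) \to D(S,F(\calO_{U_n})) \to D(S,\calO_{V_n}) \simeq D(V_n)$ lifting $F$ (using $F(\calO_{U_n}) \simeq F(\calO_U)^{\otimes (n+1)}$); each such functor is geometric because affine, hence quasi-affine, schemes are tannakian, giving a map of simplicial stacks $V_\bullet \to U_\bullet$, and one concludes by fpqc descent, with \autoref{lem:fpqcDescentQuasiAffine} identifying $|U_\bullet| \simeq X$ and $|V_\bullet| \simeq S$.

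Two further steps of your outline are also unjustified. First, even granting that $V_0 = \Spec_S F(\calO_U)$ exists as a quasi-affine $S$-scheme, the factorization $V_0 \to V \to S$ gives no control over $V_0 \to S$: flatness and surjectivity do not transfer along an arbitrary $S$-morphism into an fpqc cover (consider a closed subscheme of $V$, or the empty scheme), so the descent in your final step has no cover to descend along. Second, descending a morphism into the stack $X$ requires the full simplicial coherence (a map $V_\bullet \to U_\bullet$, i.e.\ a point of $\lim_n \Map(V_n, X)$), not merely agreement of the two pullbacks on $V_1$; this is precisely why the paper organizes the argument as a map of cosimplicial symmetric monoidal $\infty$-categories rather than a single cocycle check.
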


\begin{lemma}
	\label{lem:fpqcDescentQuasiAffine}
	Let $X$ be an algebraic stack for the fppf topology. Assume that $X$ has quasi-affine diagonal. Then $X$ is a stack for the fpqc topology.
\end{lemma}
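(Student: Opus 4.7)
The plan is to verify the fpqc sheaf condition: for every faithfully flat morphism $\Spec(R') \to \Spec(R)$ of connective $E_\infty$-rings, the natural map $X(R) \to \Tot X(R'^{\otimes_R \bullet+1})$ should be an equivalence. Since $X$ is already an fppf stack by hypothesis, the task is to promote this to the fpqc topology. Following the standard template, I would separate the problem into a sheaf condition on morphisms (full faithfulness on mapping spaces) and an effective descent statement (essential surjectivity).

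For the sheaf condition on morphisms: given $f, g : \Spec(R) \to X$, the space of equivalences between them is the space of sections of the pullback $\Spec(R) \times_{X \times X, (f, g), \Delta} X \to \Spec(R)$, which is quasi-affine by hypothesis on the diagonal. Quasi-affine spectral morphisms satisfy fpqc descent---this follows from \autoref{thm:QAchar} combined with the fpqc descent theorem for connective $E_\infty$-algebras---so the sheaf condition for morphisms follows. Applying the analogous argument in each cosimplicial degree reduces the theorem to proving that every object-level descent datum is effective.

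For effective descent: fix a descent datum $f' : \Spec(R') \to X$ with its cocycle. Using that $X$ is fppf-algebraic, pick a smooth affine atlas $\pi : U = \Spec(A) \to X$, and form $U' := \Spec(R') \times_X U \to \Spec(R')$, which is a smooth surjection. Since smooth surjections of affine spectral schemes admit sections \'etale locally, there is an \'etale extension $\Spec(R'') \to \Spec(R')$ over which $U'$ splits; this extension remains faithfully flat over $\Spec(R)$, so after replacing $R'$ by $R''$ I may assume $f'$ lifts to a map $h : \Spec(R') \to U$. The $X$-cocycle for $f'$ now identifies $\pi \circ p_1^* h$ with $\pi \circ p_2^* h$ on $\Spec(R' \otimes_R R')$, and lifting this identification to $U$ is a torsor problem under the quasi-affine scheme $U \times_X U \to U \times U$; after a further \'etale refinement the torsor becomes trivial. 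Analogous refinements in each higher simplicial degree produce a genuine descent datum on $h$ valued in $U$, which then effectively descends by fpqc descent for affine spectral schemes, yielding the desired $\Spec(R) \to U \to X$.

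The main technical obstacle is orchestrating the lifts of the simplicial coherences from $X$ to $U$ into a single coherent descent datum over one faithfully flat refinement of $\Spec(R)$. At each simplicial level, the obstruction to lifting is classified by a torsor under a quasi-affine scheme that can be trivialized by \'etale refinement; the content of the argument is to carry this out uniformly in the simplicial degree. The quasi-affine diagonal hypothesis enters crucially here, both to guarantee that these obstruction schemes are quasi-affine (and hence have good descent behavior) and to identify the relevant fibre products along the Cech conerve.
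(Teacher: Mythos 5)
Your treatment of the mapping-space (full faithfulness) half via the quasi-affine diagonal and fpqc descent for quasi-affine morphisms is fine and consistent with the paper. The problem is the effectivity step, and the gap there is essential. First, the atlas $U \to X$ is only assumed fppf, not smooth, so \'etale-local sections are not available as stated (fppf refinements would be needed), and the heuristic that the higher coherences are ``torsors under quasi-affine schemes trivialized by \'etale refinement'' is not a proof: quasi-affine schemes are not groups, there is no general trivialization statement, and you yourself concede that assembling the infinitely many homotopy-coherent layers of the descent datum into a lift over a single refinement is ``the content of the argument'' --- precisely the part left unproved. Second, and more fundamentally, the endgame cannot work as written: a genuine descent datum on $h$ valued in the affine scheme $U$, relative to $\Spec(R'')\to\Spec(R)$, would descend to a map $\Spec(R)\to U$, which is strictly stronger than the desired $\Spec(R)\to X$ and is false in general (an $R$-point of $X$ lifts to the atlas only fppf-locally on $\Spec(R)$, not globally). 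If instead you only produce a map from the Cech nerve of $\Spec(R'')/\Spec(R)$ to the Cech nerve $U^\bullet$ of $U/X$, then concluding requires the stack condition for $X$ with respect to the cover $\Spec(R'')\to\Spec(R)$, which is merely fpqc --- i.e.\ exactly the statement being proved.

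The paper's proof avoids both issues by a different maneuver: it never lifts the descent datum to $U$. Instead, it base-changes the quasi-affine fppf cover $U\to X$ along the given datum over the Cech nerve of $\Spec(R')\to\Spec(R)$, obtaining a quasi-affine fppf cover of $\Spec(R')$ equipped with a canonical descent datum, and then uses effectivity of fpqc descent for quasi-affine morphisms (this is where the quasi-affine diagonal and \autoref{thm:QAchar} enter) to descend it to an fppf quasi-affine cover $V\to\Spec(R)$. The fpqc sheaf property of the affine $U$ gives a map $V\to U$, and similarly $V^\bullet \to U^\bullet$ on Cech nerves; since $V\to\Spec(R)$ is fppf (not merely fpqc), the fppf stack property of $X$ applies to this new cover and yields the descended map $\Spec(R)\to X$. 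This conversion of the fpqc descent problem into an fppf descent problem over a newly descended cover $V$ is the key idea missing from your proposal.
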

\begin{proof}
Choose an faithfully flat map $A \to B$ of rings with Cech nerve $B^\bullet$, and let $\epsilon_\bullet:\Spec(B^\bullet) \to X$ be a given morphism. We must show that this arises from a unique morphism $\epsilon:\Spec(A) \to X$. Let $f:U \to X$ be an fppf cover of $X$ by an affine scheme. Then $f$ is fppf and quasi-affine by assumption. The base change of $f$ along $\epsilon_\bullet$ defines an fppf quasi-affine cover $U \times_X \Spec(B) \to \Spec(B)$ that is compatible with the descent datum relative to the map $\Spec(B) \to \Spec(A)$. As fppf quasi-affine maps satisfy descent for the fpqc topology, this comes from an fppf quasi-affine map $V \to \Spec(A)$ via base change. Moreover, the fpqc sheaf property for $U$ gives a map $V \to U$. Repeating the construction for fibre products of $U$ over $X$ then gives a map $V^\bullet \to U^\bullet$, where $V^\bullet \to \Spec(A)$ is the Cech nerve of $V \to \Spec(A)$, and $U^\bullet \to X$ is the Cech nerve of $f$. One then concludes by the fppf descent property for $X$.
\end{proof}

\begin{proof}[Proof of \autoref{prop:covering}]
The only if part is immediate, so assume we have $U,V$, and $\calO_V \to F(\calO_U)$ as in the statement of the lemma. One obtains a symmetric monoidal functor $D(U) \to D(S,F(\calO_U))$, and by composition one gets a continuous symmetric monoidal functor $D(U) \to D(X,\calO_V) \simeq D(V)$ lifting $F$. As affine schemes are tannakian, this functor is geometric. Likewise, if one considers the Cech nerve $U_\bullet$ of $U \to X$ and the Cech nerve $V_\bullet / S$, then one obtains a map of cosimplicial objects in symmetric monoidal categories $D(U_\bullet) \to D(V_\bullet)$ which is geometric at every level, and hence corresponds to a map of simplicial stacks $V_\bullet \to U_\bullet$. By fpqc descent (for quasi-coherent sheaves as well as for maps of stacks) on obtains a morphism $S \simeq |V_\bullet| \to |U_\bullet| \simeq X$ inducing the functor $F : D(X) \simeq \op{Tot} (D(U_\bullet)) \to \op{Tot} (D(V_\bullet)) \simeq D(S)$. (This is essentially the idea behind the proof of Lurie's tannakian formalism for geometric stacks \cite[Theorem 3.4.2]{DAGVIII}).
\end{proof}

Later we will use this lemma to prove that perfect stacks (\autoref{thm:td_perfect_stacks}) and Noetherian quasi-geometric stacks (\autoref{thm:td_Noetherian}) are tannakian, but first we collect some direct consequences of this definition.

\subsection{General properties of tannakian stacks.} Following \cite[Definition 8.21]{DAGVIII}, we will call a closed immersion of algebraic stacks $i : X \to Y$ a \emph{nilpotent thickening} if locally it is given by a map of connective $E_\infty$ rings which is a nilpotent extension on $\pi_0(\bullet)$.

\begin{proposition} \label{prop:nilpotent_thickening}
Let $X \subset Y$ be a nilpotent thickening of quasi-compact quasi-geometric algebraic stacks. If $X$ is tannakian, then $Y$ is tannakian. Furthermore, it suffices to check that for any \emph{classical} ring, $R$, any cocontinuous symmetric monoidal functor $D(X) \to D(\Spec(R))$ which preserves connective and almost perfect complexes is geometric.
\end{proposition}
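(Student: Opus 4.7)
The strategy is a transfer-and-deform argument: lift a putative geometric origin for $F: D(Y) \to D(S)$ from $X$ to $Y$ via deformation theory. Since any prestack is a colimit of affine prestacks, it suffices to take $S$ affine.

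\textbf{Producing a thickening of the target.} The plan is to form $\calA := F(i_*\calO_X) \in \CAlg(D(S))$. Because preservation of connectives together with colimit-preservation makes $F$ right $t$-exact, and because $F$ is symmetric monoidal (so sends the $N$-fold tensor product of $J = \ker(\pi_0\calO_Y \to \pi_0 i_*\calO_X)$ to the $N$-fold tensor product of its image), the induced map $\pi_0\calO_S \to \pi_0\calA$ is a quotient by a nilpotent ideal. Thus $\calA$ cuts out a closed nilpotent thickening $j: S_0 \hookrightarrow S$. Under the affine-morphism identifications $D(X) \simeq \Mod_{i_*\calO_X}(D(Y))$ and $D(S_0) \simeq \Mod_\calA(D(S))$, $F$ induces a cocontinuous symmetric monoidal $F_X : D(X) \to D(S_0)$ inheriting the preservation properties (almost perfects being stable under extension of scalars). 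The tannakian hypothesis on $X$ then produces a unique $g_0 : S_0 \to X$, and post-composing with $i$ yields $\bar{g}_0: S_0 \to Y$.

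\textbf{Lifting by square-zero extensions.} The next step is to extend $\bar{g}_0$ to a map $g: S \to Y$ satisfying $g^* \simeq F$. One factors $S_0 \hookrightarrow S$ as a finite tower of square-zero extensions (via successive powers of the defining ideal) and lifts one stage at a time. At a stage $S_n \hookrightarrow S_{n+1}$ with square-zero ideal $M_n$, lifts of $g_n: S_n \to Y$ are controlled by obstruction/torsor classes in $\Ext$-groups of $g_n^*L_Y$ against $M_n$, and the derivation classifying the extension $S_n \subset S_{n+1}$ is supplied directly by the algebra data of $F$: the compatible $\calO_S$-algebra maps to $\calA$ at each level encode exactly the required deformation data, trivializing the obstruction and specifying a canonical lift.

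\textbf{The classical-ring reduction and main obstacle.} For the ``furthermore'' clause, apply the same square-zero machinery on the target side, using the Postnikov tower $B \simeq \lim \tau_{\leq n} B$, each transition of which is a square-zero extension by $(\pi_{n+1} B)[n+1]$; starting from the geometric origin over the classical ring $\pi_0 B$ supplied by hypothesis, extend inductively stage by stage and conclude via Postnikov convergence. The principal difficulty throughout is the square-zero lifting step itself: one must canonically extract the correct derivation from the algebra structure on $\calA$, verify that it trivializes the obstruction, and then check that the globally produced $g: S \to Y$ actually pulls back to $F$---a compatibility that relies critically on the tannakian uniqueness for $X$ applied at each stage of the tower.
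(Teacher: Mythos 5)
Your opening move agrees with the paper's: setting $\calA := F(i_*\calO_X)$, using right $t$-exactness and quasi-compactness of $Y$ to see that $\Spec(\calA) \to S$ is a nilpotent thickening, factoring the composite through $D(Y,i_*\calO_X) \simeq D(X)$, and invoking the tannakian property of $X$ to get a point of $X$ over the thickened test scheme. From there, however, your argument becomes an obstruction-theoretic lifting scheme whose crucial step is asserted rather than proved. To lift $g_n : S_n \to Y$ along a square-zero extension $S_n \hookrightarrow S_{n+1}$ with ideal $M_n$, one needs (i) that $Y$ admits a cotangent complex and is infinitesimally cohesive and nilcomplete --- none of which you verify for a general quasi-compact quasi-geometric stack --- and (ii) a nullhomotopy of the composite $g_n^*L_Y \to L_{S_n} \to M_n[1]$ classifying the extension. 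Your claim that ``the compatible $\calO_S$-algebra maps to $\calA$ at each level encode exactly the required deformation data, trivializing the obstruction'' is precisely the content that has to be established, and no construction of this nullhomotopy from the symmetric monoidal functor $F$ is given; nor is the final compatibility $g^* \simeq F$ addressed (tannakian uniqueness for $X$ controls maps into $X$, not maps into $Y$, and $Y$ is not yet known to be tannakian). The same unproved step underlies your treatment of the ``furthermore'' clause via the Postnikov tower of the test ring. So the proposal is a strategy with its central step --- which you yourself flag as the principal difficulty --- left open; as written it does not constitute a proof.

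For comparison, the paper's proof never constructs the map $S \to Y$ by deformation theory. It reduces geometricity to a purely categorical criterion, \autoref{prop:covering}: $F$ is geometric as soon as it carries the algebra $\calO_U$ of a faithfully flat quasi-affine cover $U \to Y$ to such an algebra over $S$. The nilpotent-invariance needed for this is supplied by \autoref{thm:QAchar}, which shows that quasi-affine localizations of a connective algebra are insensitive to nilpotent thickenings (so quasi-affine algebras over $Y$ and over $X$, respectively over $S$ and over $\Spec(F(\calO_X))$, match up), together with \autoref{lem:characterize_flatness}, which lets one detect faithful flatness after base change along the thickening. The map $\Spec(F(\calO_X)) \to X$ furnished by the tannakian hypothesis is used only to know that the induced functor $F'$ preserves fpqc quasi-affine algebras. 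The ``furthermore'' clause then falls out of the same argument applied to the nilpotent thickening $\Spec(\pi_0 R) \hookrightarrow \Spec(R)$, with no tower-lifting or convergence argument required. If you want to salvage your approach, the missing input is essentially an identification of $F$ with a pullback on a large enough subcategory before the lift is constructed --- which is what the covering criterion provides without any appeal to the cotangent complex of $Y$.
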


\begin{lemma} \label{lem:characterize_flatness}
Let $R \to S$ be a nilpotent thickening of $S$. Then a connective $R$-module, $M$, is flat if and only if $S \otimes_R M$ is a flat $S$-module.
\end{lemma}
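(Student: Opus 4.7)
The only-if direction is immediate: flatness is stable under base change. For the if-direction, my plan is to verify the two conditions characterizing flatness of a connective $R$-module $M$: (i) $\pi_0 M$ is classically flat over $\pi_0 R$, and (ii) the canonical map $R \otimes_{\pi_0 R} \pi_0 M \to M$ is an equivalence. Let $I := \ker(\pi_0 R \to \pi_0 S)$, which is a nilpotent ideal by hypothesis.

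The key input is that since $P := M \otimes_R S$ is $S$-flat, one has $P \simeq S \otimes_{\pi_0 S} \pi_0 P$ with $\pi_0 P$ classically $\pi_0 S$-flat, and hence $P \otimes_S \pi_0 S \simeq \pi_0 P$. Setting $N := M \otimes_R \pi_0 R$, this shows that $N \otimes_{\pi_0 R} \pi_0 S \simeq M \otimes_R \pi_0 S$ is discrete and classically $\pi_0 S$-flat. To ascend from $\pi_0 S$ to $\pi_0 R$, I would use the $I$-adic filtration $\pi_0 R = \pi_0 R/I^m \twoheadrightarrow \cdots \twoheadrightarrow \pi_0 R/I = \pi_0 S$ (where $I^m = 0$) to factor $\pi_0 R \to \pi_0 S$ as a tower of square-zero surjections of classical rings. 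At each step, flatness ascends via the following square-zero local criterion: if $A \twoheadrightarrow A/J$ is a square-zero surjection of classical rings and $N'$ is a connective $A$-module spectrum with $N' \otimes_A A/J$ classically $A/J$-flat, then $N'$ is classically $A$-flat. Iterating yields that $N$ is classically $\pi_0 R$-flat, whence $\pi_0 M = \pi_0 N$ is classically flat over $\pi_0 R$, giving (i).

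For (ii), set $C := \cofib(R \otimes_{\pi_0 R} \pi_0 M \to M)$. The canonical map, tensored with $\pi_0 R$ over $R$, becomes the identity on $\pi_0 M$ (using (i) to identify both sides with $\pi_0 M$), so $C \otimes_R \pi_0 R \simeq 0$. The cofibre sequence $\tau_{\geq 1} R \to R \to \pi_0 R$, tensored with $C$, then yields $C \otimes_R \tau_{\geq 1} R \xrightarrow{\sim} C$. Since $\tau_{\geq 1} R$ is $1$-connective, each such identification increases the connectivity of $C$ by one, forcing $C$ to be arbitrarily connective and hence $C \simeq 0$.

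The main obstacle is the square-zero ascent lemma invoked in the middle step; it is a spectral enhancement of the classical Bourbaki-style local criterion of flatness for nilpotent ideals. Its verification proceeds via a careful analysis of the Tor spectral sequence associated to the square-zero surjection, exploiting the nilpotence of $J$ through a Nakayama-style argument to force vanishing of the positive-degree homotopy groups and reduce to the classical criterion in degree zero.
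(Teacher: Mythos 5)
Your overall architecture (pass to $\pi_0$, then climb a tower of square-zero extensions) parallels the paper's proof, but as written there are two concrete problems. First, the object $R \otimes_{\pi_0 R} \pi_0 M$ (and likewise $S \otimes_{\pi_0 S} \pi_0 P$) does not exist: $R$ is a connective $E_\infty$-ring and there is in general no ring map $\pi_0 R \to R$ (only $R \to \pi_0 R$), so $R$ is not a $\pi_0 R$-module and there is no ``canonical map $R \otimes_{\pi_0 R} \pi_0 M \to M$'' whose cofibre $C$ you can form. Your step (ii) therefore breaks as stated. The criterion should be run in the direction that does exist: $M$ is flat over $R$ if and only if $\pi_0 R \otimes_R M$ is a flat (hence discrete) $\pi_0 R$-module, since then for any discrete $R$-module $T$ one has $T \otimes_R M \simeq T \otimes_{\pi_0 R} (\pi_0 R \otimes_R M)$, which is discrete. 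With this fix your step (ii) is not needed at all: your own step (i), if you keep its full conclusion (that $N = \pi_0 R \otimes_R M$ is discrete and flat over $\pi_0 R$, not merely that $\pi_0 M$ is classically flat), already finishes the proof. This is exactly the paper's reduction to the case $S = \pi_0 R$.

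Second, the crux of the whole lemma --- your square-zero ascent statement --- is asserted rather than proved; ``a careful analysis of the Tor spectral sequence'' with ``a Nakayama-style argument'' is a plan, not an argument, and it is precisely the discrete case of the lemma you are asked to prove. The statement is true, and the clean proof is the paper's short filtration argument: for $A$ discrete with $J^2 = 0$ and $N'$ connective with $\bar N := A/J \otimes_A N'$ discrete and flat, any discrete $A$-module $T$ has the two-step filtration $JT \subset T$ whose graded pieces are $A/J$-modules, so $(JT) \otimes_A N' \simeq (JT) \otimes_{A/J} \bar N$ and $(T/JT) \otimes_A N' \simeq (T/JT) \otimes_{A/J} \bar N$ are discrete, whence $T \otimes_A N'$ is discrete by the long exact sequence and $N'$ is flat (the paper does this in one stroke for a nilpotent ideal by filtering $T$ by all powers of $I$, avoiding your dévissage through the tower $\pi_0 R/I^k$). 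So your proposal is repairable and close in spirit to the paper, but as submitted the central construction of step (ii) is ill-formed and the essential step of step (i) is left unestablished.
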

\begin{proof}
The only if part is immediate, and it suffices to show the other direction separately in the case where $S = \pi_0(R)$ and in the case where both $R$ and $S$ are discrete. First assume $S = \pi_0(R)$. It is enough to show that $N \otimes_R M$ is discrete for any discrete $R$-module $N$. If $N$ is discrete, it comes from a $\pi_0(R)$-module, so $N \otimes_R M \simeq \pi_0(N) \otimes_{\pi_0(R)} (\pi_0(R) \otimes_R M)$, which is discrete if $\pi_0(R) \otimes_R M$ is flat.

Next assume that $R$ and $S$ are discrete, and let $I = \ker(R \to S)$. Then any discrete $R$-module $N$ admits a finite filtration by powers of $I$, so in order to show that $N \otimes_R M$ is discrete, it suffices to show that $(I^k N / I^{k+1}N) \otimes_R M$ is discrete for all $k$. $I^k N / I^{k+1} N$ comes from an $S$-module, so again we have $(I^k N / I^{k+1}N) \otimes_R M \simeq (I^k N / I^{k+1}N) \otimes_S (S \otimes_R M)$ is discrete if $S \otimes_R M$ is flat.
\end{proof}

\begin{proof}[Proof of \autoref{prop:nilpotent_thickening}]
In order to show that a continuous symmetric monoidal functor $F: D(Y) \to D(\Spec(R))$ is geometric, it suffices by \autoref{prop:covering} to show that $F$ preserves quasi-affine algebras corresponding to fpqc quasi-affine maps. The fact that $i:X \to Y$ is a nilpotent thickening implies that $R \to S := F(\calO_X)$ is a nilpotent thickening as well, because if $I = \ker(H_0( \calO_Y) \to H_0(\calO_X))$, then $F(I) \to \ker(H_0(R) \to H_0(S))$ is surjective. Because $Y$ is quasi-compact, the map $I^{\otimes n} \to H_0(\calO_Y)$ vanishes for $n$ large and thus so does $F(I)^{\otimes n} \to H_0(R)$. The composition of $F$ with the symmetric monoidal functor $D(R) \to D(S)$ factors through $D(Y,\calO_X) \simeq D(X)$, and the corresponding functor $F' : D(X) \to D(S)$ preserves connective objects because $F$ does. The functor $F'$ corresponds to a map $\Spec(S) \to X$ because $X$ is tannakian, and thus $F'$ preserves quasi-affine algebras corresponding to fpqc quasi-affine maps $U \to X$. Note that as a consequence of \autoref{thm:QAchar}, if $i : X \to Y$ is a nilpotent thickening and $\calA \in \CAlg(D(Y)^{cn})$, then $i^\ast$ induces an equivalence between the category of quasi-affine localizations of $\calA$ and quasi-affine localizations of $i^\ast \calA$. Furthermore \autoref{lem:characterize_flatness} implies that under this equivalence $\calA \to \calO_U$ corresponds to a fpqc quasi-affine map $U \to Y$ if and only if $i^\ast \calA \to i^\ast \calO_U$ corresponds to an fpqc quasi-affine map. We apply this observation to the nilpotent thickening $i' : \Spec(S) \to \Spec(R)$, along with the equivalence $(i')^\ast \circ F \simeq F' \circ i^\ast$ to conlude that $F$ preserves algebras corresponding to fpqc quasi-affine maps.

The same argument shows that $F : D(X) \to D(\Spec(R))$ preserves fpqc quasi-affine algebras if and only if the composition with $D(\Spec(R)) \to D(\Spec(\pi_0 R))$ preserves fpqc quasi-affine algebras. Hence it suffices to consider only classical rings when checking that $X$ is tannakian.
\end{proof}

\begin{lemma}
	Let $X$ be a quasi-geometric tannakian algebraic stack, and let $\widehat{X}$ be the formal completion of $X$ along the complement of a quasi-compact open substack $U$. Then $\widehat{X}$ is tannakian.
\end{lemma}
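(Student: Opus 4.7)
Write $\widehat{X}$ as the formal completion of $X$ along $Z := |X|\setminus|U|$, so the inclusion $\hat\iota\colon \widehat{X}\to X$ is a monomorphism of prestacks whose $R$-points are those $R$-points of $X$ that factor set-theoretically through $|Z|$. Presenting $\widehat{X}$ as the union of the infinitesimal thickenings $Z_n$ of $Z$ inside $X$ yields $D(\widehat{X})\simeq \lim_n D(Z_n)$ and a cocontinuous symmetric monoidal pullback $\hat\iota^*\colon D(X)\to D(\widehat{X})$ which preserves connective and almost perfect complexes. The geometric heart of the plan is the vanishing $\hat\iota^*(\calO_U)\simeq 0$ in $D(\widehat{X})$: it suffices to check this in each $D(Z_n)$, where flat base change along the quasi-compact open immersion $j\colon U\to X$ gives $i_n^*(j_*\calO_U)\simeq 0$ because $Z_n\times_X U=\emptyset$ (since $|Z_n|\subseteq |Z|$ is disjoint from $|U|$).

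For the essential image, given a cocontinuous symmetric monoidal $F\colon D(\widehat{X})\to D(S)$ preserving connective and almost perfect complexes, form the composite $G := F\circ\hat\iota^*\colon D(X)\to D(S)$, which inherits the same properties. Applying the tannakian property of $X$ produces $f\colon S\to X$ with $G\simeq f^*$. Then $f^*(\calO_U)\simeq F(\hat\iota^*(\calO_U))\simeq 0$, and \autoref{thm:QAchar} identifies $S\times_X U$ with $\op{Spec}_S(f^*\calO_U)=\emptyset$, so $f$ factors set-theoretically through $|Z|$ and yields a unique lift $\tilde f\colon S\to \widehat{X}$.

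The main obstacle will be to upgrade the agreement $\tilde f^*\circ\hat\iota^* \simeq F\circ\hat\iota^*$ to an equivalence $\tilde f^*\simeq F$ on all of $D(\widehat{X})$; the analogous density issue is what produces fully faithfulness of $\Map(S,\widehat{X})\to \Fun^L_\otimes(D(\widehat{X}),D(S))$. The plan is to show that precomposition with $\hat\iota^*$ realizes $\Fun^L_\otimes(D(\widehat{X}),D(S))$ as a full subcategory of $\Fun^L_\otimes(D(X),D(S))$, with essential image given by those $\otimes$-functors $G$ satisfying $G(\calO_U)\simeq 0$. Concretely, this amounts to showing that $\hat\iota^*$ is a symmetric monoidal Bousfield localization whose kernel is generated under colimits and the tensor structure by the essential image of $j_*\colon D(U)\to D(X)$. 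Granting this, both fully faithfulness and the essential image description for $\widehat{X}$ follow by transport along the commutative square comparing the tannakian embeddings for $X$ and $\widehat{X}$, with the vanishing $f^*\calO_U\simeq 0$ matching precisely the condition that $f$ factors through $\widehat{X}$. If the clean localization statement proves elusive, a fallback is to argue one thickening at a time: since $\Map(S,\widehat{X})$ is built from the $\Map(S,Z_n)$ and $D(\widehat{X})$ from the $D(Z_n)$, one can try to factor $F$ compatibly through each pullback $D(\widehat{X})\to D(Z_n)$ and invoke tannakian-ness of the $Z_n$ (each of which is a nilpotent thickening inside $X$).
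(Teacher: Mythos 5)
Your essential-image argument is exactly the paper's: compose $F$ with the completion pullback $\hat\iota^*\colon D(X)\to D(\widehat{X})$, invoke tannakian-ness of $X$ to get $f\colon S\to X$, observe $f^*\calO_U\simeq F(\hat\iota^*\calO_U)\simeq 0$, and use that $\widehat{X}\to X$ is a monomorphism together with the defining property of the formal completion to produce the unique lift $\tilde f$. Where you diverge is in how you propose to get full faithfulness and the final identification $\tilde f^*\simeq F$. For full faithfulness the paper is much cheaper than your plan: since $\widehat{X}\to X$ is a monomorphism its diagonal is an equivalence, so $\widehat{X}$ is quasi-geometric because $X$ is, and $D(\widehat{X})$ is presentable (a small limit of presentable categories, checked affine-locally via \cite[Lemma 5.1.5]{DAGXII}); then \cite[Proposition 3.3.11]{DAGVIII} gives full faithfulness directly, with no localization statement about $\hat\iota^*$ needed. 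So your ``main obstacle'' machinery is not required for that half of the statement.

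For the other half, the obstacle you flag --- upgrading $F\circ\hat\iota^*\simeq\tilde f^*\circ\hat\iota^*$ to $F\simeq\tilde f^*$ --- is indeed the one delicate point; the paper's own write-up compresses it into ``it is enough to check that $f$ factors through $\widehat{X}$,'' implicitly relying on the fact that $\hat\iota^*$ is a symmetric monoidal localization (affine-locally this is the identification of $D(\widehat{X})$ with complete modules, so precomposition with $\hat\iota^*$ is fully faithful on cocontinuous $\otimes$-functors). Your proposed Bousfield-localization claim, with kernel the image of $j_*\colon D(U)\to D(X)$, is the right statement to justify this, but in your proposal it is only announced, not proved, so as written the argument is a plan rather than a proof at exactly this step; note also that globalizing it from the affine case, and your asserted presentation $D(\widehat{X})\simeq\lim_n D(Z_n)$ by infinitesimal thickenings, both need justification when $X$ is not Noetherian (quasi-compactness of $U$ gives a finitely generated ideal only affine-locally), whereas the paper only ever uses the affine-local description.
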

\begin{proof}
We first remark that $\widehat{X}$ is quasi-geometric: the diagonal of the monomorphism $\widehat{X} \to X$ is an isomorphism, and hence $\widehat{X}$ is quasi-geometric since $X$ is so (using \cite[Proposition 3.3.3]{DAGVIII}). Furthermore $D(\widehat{X})$ is presentable because presentable categories are closed under small limits.\footnote{By choosing an fpqc hypercover of $X$ by affines and applying fpqc hyperdescent for $D(X)$, we may assume $X = \Spec(A)$ for a connective $E_\infty$-ring $A$, and $\widehat{X}$ is the completion of $X$ along a finitely generated ideal $I \subset \pi_0(A)$. Then \cite[Lemma 5.1.5]{DAGXII} shows that $D(\widehat{X}) \simeq \lim D(\Spec(A_n))$ for a suitable set of rings $A_n$.}

	Using \cite[Proposition 3.3.11]{DAGVIII}, it is enough to check: for each affine $S$ and each cocontinuous symmetric monoidal functor $F:D(\widehat{X}) \to D(S)$ preserving connective complexes and almost perfect complexes is geometric. Any such $F$ induces an analogous functor $F':D(X) \to D(S)$ via composition along the pullback $D(X) \to D(\widehat{X})$. Since $X$ is tannakian, one obtains a unique map $f:S \to X$, up to contractible choices, such that $F' \simeq f^*$. As $\widehat{X} \to X$ is a monomorphism, it is enough to check that $f$ factors through $\widehat{X}$. By general properties of formal completions, this is equivalent to showing that $F'(\calO_U) = 0$, which is obvious since the pullback $D(X) \to D(\widehat{X})$ kills $\calO_U$.
\end{proof}

The following shows that quasi-compact open substacks and closed substacks of a quasi-geometric tannakian stack are tannakian.

\begin{lemma} \label{lem:general_properties}
Let $U \to X$ be a quasi-affine map. If $X$ is tannakian and quasi-geometric and $D(X)$ is presentable, then $U$ is tannakian.
\end{lemma}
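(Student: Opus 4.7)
The plan is to reduce the tannakian property of $U$ to that of $X$ via the identification $D(U) \simeq \calA\text{-Mod}(D(X))$ from \autoref{thm:QAchar}, where $\calA := \pi_*\calO_U$ and $\pi:U \to X$ denotes the quasi-affine map. Under this identification, $\pi^*$ becomes base change by $\calA$, the unit of $D(U)$ is $\calA$, and any cocontinuous symmetric monoidal functor $F':D(U) \to D(S)$ is determined by the pair consisting of the composite $F := F' \circ \pi^*$ together with an augmentation $F(\calA) \to \calO_S$ in $\CAlg(D(S))$; this augmentation arises by applying $F'$ to the multiplication $\pi^*\calA \simeq \calA \otimes_{\calO_X} \calA \to \calA$, and the correspondence is the universal property of the module category $\calA\text{-Mod}(D(X))$ in the presentable symmetric monoidal setting.

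The next step compares this with the geometric side. Post-composition with $\pi$ gives a map $\Map(S,U) \to \Map(S,X)$ whose fiber over $f:S\to X$ is the space of sections of the quasi-affine morphism $S \times_X U \to S$; by base change for pushforward along quasi-affine maps (\cite[Corollary 3.2.6]{DAGVIII}), this fiber identifies with $\Map_{\CAlg(D(S))}(f^*\calA, \calO_S)$. Since $X$ is tannakian, $f:S\to X$ corresponds bijectively to cocontinuous symmetric monoidal $F:D(X) \to D(S)$ preserving connective and almost perfect complexes, and under this correspondence $f^*\calA$ is identified with $F(\calA)$. Combining these observations yields an equivalence between $\Map(S,U)$ and the full subcategory of $\Fun^L_\otimes(D(U),D(S))$ spanned by those $F'$ such that $F' \circ \pi^*$ preserves connective and almost perfect complexes.

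The final step is to identify this subcategory with the one appearing in the definition of tannakian, namely those $F'$ that themselves preserve connective and almost perfect complexes on $D(U)$. One direction is formal: since pullback along any morphism of stacks preserves connective objects (right $t$-exactness) and almost perfect objects (pseudo-coherence is local and pullback-stable), if $F'$ preserves both, then so does $F' \circ \pi^*$. The converse is automatic from what we have established: given $F'$ with $F'\circ\pi^*$ preserving both, the construction produces a map $\tilde{f}:S \to U$ with $\tilde{f}^* \simeq F'$, and the pullback $\tilde{f}^*$ preserves both by the same general fact. I expect the main subtlety to lie in step one --- articulating the universal property of $\calA\text{-Mod}(D(X))$ precisely enough to match algebra augmentations of $F(\calA)$ with sections of the quasi-affine map $S \times_X U \to S$ --- after which the remaining argument is a formal diagram chase.
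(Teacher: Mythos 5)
Your proposal is correct and follows essentially the same route as the paper: compose with $\pi^*$ to get a functor out of $D(X)$ preserving connective and almost perfect complexes, invoke the tannakian property of $X$ to obtain $f\colon S\to X$, and then use $D(U)\simeq D(X,\calO_U)$ together with the quasi-affine classification of \autoref{thm:QAchar} to produce the lift $S\to U$. The only difference is cosmetic: you phrase the lifting step over $S$ (augmentations $F(\calA)\to\calO_S$ matched with sections of $S\times_X U\to S$ via the universal property of module categories), while the paper applies the full faithfulness of $\QAff^{\opp}_{/X}\to\CAlg(D(X))$ to the adjoint algebra map $\calO_U\to f_*\calO_S$, using quasi-geometricity of $X$ to know that $S\to X$ is quasi-affine.
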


\begin{proof}
For the first claim, note that a cocontinuous symmetric monoidal functor $F : D(U) \simeq D(X,\calO_U) \to D(\Spec R)$ induces a cocontinuous symmetric monoidal functor $D(X) \to D(\Spec R)$ which preserves connective and almost perfect complexes if $F$ does. Thus the latter functor is geometric, and the corresponding map $f : \Spec R \to X$ is quasi-affine because $X$ is quasi-geometric. The functor $F$ induces a map $\calO_U \to f_\ast R$ in $\CAlg(D(X))$, and by \autoref{thm:QAchar} this map is geometric, hence induced by a map $\Spec R \to U$.
\end{proof}

\begin{lemma}
Let $X \to Z \gets Y$ be a diagram of quasi-geometric tannakian stacks over a field of characteristic $0$. Assume that $X$ and $Z$ are QCA in the sense of Drinfeld-Gaitsgory \cite{drinfeld2013some}. Then $X \times_Z Y$ is also a quasi-geometric tannakian stack.
\end{lemma}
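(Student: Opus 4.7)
Let $P := X \times_Z Y$ and fix an affine test object $S$. The plan is to verify (i) that $P$ is quasi-geometric, so that \cite[Proposition 3.3.11]{DAGVIII} gives full faithfulness of $\Map(S, P) \to \Fun^L_\otimes(D(P), D(S))$, and (ii) that the essential image consists exactly of those functors preserving connective and almost perfect objects.

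For quasi-geometricity, I would decompose $\Delta_P : P \to P \times P$ as $P \to P \times_{X \times Y} P \to P \times P$. The second map is a base change of $\Delta_X \times \Delta_Y$, hence quasi-affine since $X$ and $Y$ are quasi-geometric. The first map is the relative diagonal of $P \to X \times Y$; since $P \to X \times Y$ is itself the base change of $\Delta_Z$ along $X \times Y \to Z \times Z$, its diagonal is a base change of the diagonal of $\Delta_Z$. As $Z$ is quasi-geometric, $\Delta_Z$ is quasi-affine hence separated, so its diagonal is a closed immersion, and consequently $\Delta_P$ is quasi-affine.

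For (ii), given a cocontinuous symmetric monoidal $F : D(P) \to D(S)$ preserving connective and almost perfect complexes, composing with the pullbacks along $p : P \to X$ and $q : P \to Y$ (both of which preserve connectivity and almost-perfectness) yields $F_X := F \circ p^*$ and $F_Y := F \circ q^*$ with the same preservation properties. Tannakian-ness of $X$ and $Y$ then produces unique maps $f : S \to X$ and $g : S \to Y$ with $f^* \simeq F_X$ and $g^* \simeq F_Y$. The two composites $D(Z) \to D(X) \to D(S)$ and $D(Z) \to D(Y) \to D(S)$ both coincide with $F$ precomposed with pullback from $D(Z)$; the tannakian full faithfulness for $Z$ then lifts this equivalence to a canonical homotopy between $u \circ f$ and $v \circ g$ in $\Map(S, Z)$ (where $u : X \to Z$ and $v : Y \to Z$ are the structure maps), producing the desired map $h : S \to P$.

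The main obstacle is to verify that $h^* \simeq F$, which rests on the base change equivalence
\[ D(X) \otimes_{D(Z)} D(Y) \simeq D(P) \]
as symmetric monoidal presentable stable $\infty$-categories. This is where the QCA and characteristic-zero hypotheses enter: by the results of Drinfeld-Gaitsgory \cite{drinfeld2013some}, when $X$ and $Z$ are QCA in characteristic $0$, $D(X)$ is compactly generated and dualizable as a $D(Z)$-module, which yields the above base change for arbitrary $Y \to Z$. Once this identification is in hand, $\Fun^L_\otimes(D(P), D(S))$ is a pullback of $\Fun^L_\otimes(D(X), D(S))$ and $\Fun^L_\otimes(D(Y), D(S))$ over $\Fun^L_\otimes(D(Z), D(S))$; combining this with the tannakian equivalences for $X$, $Y$, and $Z$ identifies the subcategory of functors preserving connective and almost perfect complexes with $\Map(S, X) \times_{\Map(S, Z)} \Map(S, Y) \simeq \Map(S, P)$, completing the argument.
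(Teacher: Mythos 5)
Your proposal is correct and follows essentially the same route as the paper: quasi-geometricity of $X\times_Z Y$ gives full faithfulness via \cite[Proposition 3.3.11]{DAGVIII}, tannakian-ness of $X$ and $Y$ (plus full faithfulness for $Z$) produces the map $S \to X\times_Z Y$, and the Drinfeld--Gaitsgory QCA results supply the key equivalence $D(X)\otimes_{D(Z)}D(Y) \simeq D(X\times_Z Y)$ guaranteeing that this map induces the given functor. The only differences are cosmetic: you verify the quasi-affine diagonal by hand rather than citing relative quasi-geometricity, and you spell out the homotopy over $Z$ that the paper leaves implicit.
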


\begin{proof}
One first uses \cite[Theorems 0.3.3 and 0.3.4]{drinfeld2013some} to show that our diagram satisfies the hypotheses of \cite[Proposition 3.3.3]{GaitsgoryQCohNotes}; the latter implies that base change is an equivalence $D(X) \otimes_{D(Z)} D(Y) \to D(X\times_Z Y)$. $X \times_Z Y$ is quasi-geometric,\footnote{By \cite[Proposition 3.3.3]{DAGVIII}, the map $Y \to Z$ is quasi-geometric as $Y$ and $Z$ are so, and hence the map $X \times_Z Y \to X$ is quasi-geometric, and so $X \times_Z Y$ is quasi-geometric since $X$ is so.} so full faithfulness follows from \cite[Proposition 3.3.11]{DAGVIII}. Given a potentially geometric functor  $D(X \times_Z Y) \to D(S)$,  for an affine $S$, we obtain potentially geometric functors $D(X) \to D(S)$ and $D(Y) \to D(S)$ via composition with with the projections. Since both $X$ and $Y$ are tannakian, this defines a map $S \to X \times_Z Y$, and this map is unique because the natural map $D(X) \otimes_{D(Z)} D(Y) \to D(X \times_Z Y) \to D(S)$ is an equivalence.
\end{proof}

For algebraic stacks, it is necessary to restrict to functors which preserve connective complexes, as the following plausibility argument shows.

\begin{example}
Consider $X = S = B(\G_m)$ over a field $k$ of characteristic $0$. Write $L$ for the universal line bundle on $X$. Then informally speaking (we are not aware of a reference that makes this statement rigorous), $\Perf(X)$ is the free  symmetric monoidal stable $k$-linear category on one invertible object (namely, $L$). Let $F:D(X) \to D(S)$ be the unique cocontinuous symmetric monoidal functor that carries $L$ to $L[-1]$. Then $F$ cannot be geometric, as it fails to preserve connective complexes.
\end{example}

\subsection{Functors between small categories}
\label{sect:small_categories}

In applications of the tannakian formalism, it is often easier to work with categories of almost perfect (or perfect) complexes, rather than considering all quasi-coherent sheaves.

\begin{lemma}
If $X$ is a Noetherian perfect stack and $S$ is any prestack, then a cocontinuous symmetric monoidal functor $F : D(X) \to D(S)$ preserves connective complexes if and only if it preserves connective almost perfect complexes.
\end{lemma}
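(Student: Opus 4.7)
The plan is to prove the two implications separately, exploiting the structural properties of $D(X)$ afforded by $X$ being a Noetherian perfect stack.

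For the implication that $F$ preserves connective complexes implies $F$ preserves connective almost perfect complexes, I would take $K \in \APerf(X)^{cn}$ and invoke the standard description of almost perfect objects on a Noetherian stack: there exists a sequence of maps $P_n \to K$ from perfect complexes $P_n \in \Perf(X)$ with $\fib(P_n \to K) \in D^{\leq -n-1}(X)$ for each $n \geq 0$. Since $F$ is cocontinuous and symmetric monoidal, it preserves dualizable objects, so each $F(P_n)$ is dualizable, and hence perfect, in $D(S)$. The hypothesis that $F$ preserves connective complexes is equivalent to preservation of $D^{\leq m}$ for every $m \in \Z$, so $\fib(F(P_n) \to F(K)) \in D^{\leq -n-1}(S)$; these witnesses exhibit $F(K)$ as almost perfect, and connectivity of $F(K)$ is immediate from the hypothesis.

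For the converse, given $K \in D(X)^{cn}$, I would use that $X$ is perfect, i.e.\ $D(X) \simeq \Ind(\Perf(X))$, to write $K \simeq \colim_i P_i$ as a filtered colimit of perfect complexes $P_i$. Since $\tau^{\leq 0}$ commutes with filtered colimits, we have $K \simeq \tau^{\leq 0} K \simeq \colim_i \tau^{\leq 0} P_i$. Noetherianity of $X$ ensures that each $\tau^{\leq 0} P_i$ is almost perfect (this reduces, affine-locally, to the fact that a bounded-above complex with coherent cohomology over a Noetherian ring admits a level-wise finitely generated free resolution). By hypothesis, each $F(\tau^{\leq 0} P_i)$ lies in $D(S)^{cn}$; since $F$ is cocontinuous while $D(S)^{cn} \subset D(S)$ is closed under colimits, $F(K) \simeq \colim_i F(\tau^{\leq 0} P_i)$ is connective.

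I do not expect serious obstacles. The main points to verify carefully are the two uses of Noetherianity---the existence of a perfect approximation tower for $K \in \APerf(X)^{cn}$ and the fact that $\tau^{\leq 0}$ of a perfect complex is almost perfect---both of which are standard after reducing to the affine Noetherian case via a smooth atlas. A minor subtlety worth checking is that symmetric monoidal cocontinuous functors into $D(S)$ for an arbitrary prestack $S$ preserve perfect (dualizable) objects, but this is purely formal.
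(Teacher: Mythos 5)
Your converse direction is essentially the paper's own argument: write a connective $K$ as a filtered colimit of perfect complexes, pass to connective truncations, use Noetherianity to see that these truncations are connective almost perfect, and conclude by cocontinuity of $F$ together with closure of connective objects under colimits. That half is correct.

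The forward direction is where you diverge from the paper, and it contains a genuine gap. You assert, as ``the standard description of almost perfect objects on a Noetherian stack'', that every $K \in \APerf(X)^{cn}$ admits \emph{globally defined} perfect complexes $P_n \in \Perf(X)$ with maps $P_n \to K$ whose fibers lie in $D^{\leq -n-1}(X)$, and you propose to justify this by reducing to the affine Noetherian case via a smooth atlas. That reduction does not work: almost perfection is a local condition, but the existence of a single global perfect approximation is not, and gluing affine-local approximations is precisely the hard content of Thomason--Trobaugh-type approximation theorems. For general Noetherian stacks such global approximation by perfect complexes is not available --- this is exactly why the paper's Noetherian Tannaka theorem, \autoref{thm:td_Noetherian}, is formulated in terms of $\APerf$ rather than $\Perf$ --- and even under the perfect-stack hypothesis it is a substantive theorem known under additional hypotheses (quasi-finite diagonal, nice quotient stacks, etc., cf.\ Hall--Rydh), not a formality. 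The paper sidesteps all of this: since $X$ is a Noetherian perfect stack (so $D(X)$ is compactly generated and the diagonal is quasi-affine), a cocontinuous symmetric monoidal $F$ preserving connective complexes is already geometric by \autoref{thm:td_perfect_stacks}, i.e.\ $F \simeq f^*$ for some $f : S \to X$, and pullback functors preserve almost perfect complexes. Replacing your approximation step by this appeal to \autoref{thm:td_perfect_stacks} (which is proved independently of the present lemma, so there is no circularity) repairs the argument; otherwise you would need to prove, or cite, the global approximation property for the stacks in question, which is a far stronger input than anything else in your proof.
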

\begin{proof}
If $F$ preserves connective complexes, then it is geometric by \autoref{thm:td_perfect_stacks}, so it preserves almost perfect complexes. Conversely, we can write any connective $M$ as a filtered colimit $\colim P_i$ of perfect complexes, and hence we also have $M \simeq \colim \tau_{\geq 0} P_i$. It follows that $F^\ast M \simeq \colim F^\ast (\tau_{\geq 0} P_i)$ is also connective.
\end{proof}

\begin{lemma} \label{lem:small_cat_Noetherian}
Let $X$ be a Noetherian stack, and let $S$ be a prestack. If $\Fun_\otimes^L(D(X),D(S))'$ denotes the full subcategory of $\Fun_\otimes^L(D(X),D(S))$ consisting of functors preserving connective complexes and almost perfect complexes, then the restriction map induces an equivalence
\[\Fun_{\otimes}^L(D(X),D(S))' \to \Func_{\otimes} (\APerf(X)^{cn},\APerf(S)^{cn})\]
\end{lemma}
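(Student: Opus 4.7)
The restriction functor is well-defined: given $F \in \Fun^L_\otimes(D(X),D(S))'$, the hypotheses that $F$ is symmetric monoidal and preserves both connective complexes and almost perfect complexes force $F$ to carry $\APerf(X)^{cn}$ into $\APerf(S)^{cn}$, and the cocontinuity of $F$ ensures that this restriction preserves finite colimits. To produce a quasi-inverse, I would extend a given symmetric monoidal $G : \APerf(X)^{cn} \to \APerf(S)^{cn}$ preserving finite colimits in two stages: first to a cocontinuous symmetric monoidal functor $F^{cn} : D(X)^{cn} \to D(S)^{cn}$ via Ind-completion, then to a cocontinuous symmetric monoidal functor $F : D(X) \to D(S)$ via the right $t$-completeness of $D(X)$.

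In Stage A the key input is the equivalence $D(X)^{cn} \simeq \mathrm{Ind}(\APerf(X)^{cn})$, valid for any Noetherian spectral algebraic stack, both as presentable stable $\infty$-categories and compatibly with symmetric monoidal structures. Granted this, the universal property of Ind-completion as a symmetric monoidal cocompletion produces a unique cocontinuous symmetric monoidal extension $F^{cn}$ of $G$, which by construction lands in $\APerf(S)^{cn}$. In Stage B, since $D(X)$ is right $t$-complete, \cite[Lemma 5.4.6]{DAGXII}---already invoked in the remark after \autoref{defn:tannakian}---identifies cocontinuous symmetric monoidal functors $D(X) \to D(S)$ that preserve connective complexes with cocontinuous symmetric monoidal functors $D(X)^{cn} \to D(S)^{cn}$. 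Applied to $F^{cn}$, this yields the desired $F$, which preserves connective complexes by construction and preserves almost perfect complexes because $\APerf(X) = \bigcup_n \APerf(X)^{cn}[-n]$, $F$ commutes with shifts, and $F^{cn}$ lands in $\APerf(S)^{cn}$.

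The main obstacle is justifying the Ind-completion description $D(X)^{cn} \simeq \mathrm{Ind}(\APerf(X)^{cn})$ used in Stage A. Over a Noetherian commutative ring $R$ it amounts to the statement that a connective $R$-module is pseudo-coherent precisely when it is almost compact in $D(R)^{cn}$, combined with the fact that every connective $R$-module is a filtered colimit of such objects---a spectral enhancement of the classical statement that every quasi-coherent sheaf on a Noetherian scheme is a filtered union of its coherent subsheaves. Globalizing to $X$ would proceed by descent along a smooth affine presentation, using the preservation of $\APerf$ under smooth pullback. Once this input is secured, verifying that restriction and the two-stage extension are mutually inverse is a formal consequence of the universal properties of Ind-completion and of right $t$-completion.
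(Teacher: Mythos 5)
The well-definedness of restriction and your Stage B are fine, and Stage B is exactly the paper's first reduction via \cite[Lemma 5.4.6]{DAGXII}. The gap is in Stage A: the asserted equivalence $D(X)^{cn} \simeq \mathrm{Ind}(\APerf(X)^{cn})$ is false. Connective almost perfect complexes are only \emph{almost} compact (compact in each truncation $D(X)^{cn}_{\leq n}$), not compact in $D(X)^{cn}$, and your own justification slides from ``almost compact'' to the genuine compactness that the universal property of $\mathrm{Ind}$ requires. Concretely, over a field $k$ the object $M = \bigoplus_{i \geq 0} k[i]$ is connective and almost perfect (finite-dimensional cohomology in each degree), but $\Map(M,-)$ involves an infinite product of cohomology groups and does not commute with filtered colimits: $M$ is the colimit of its brutal truncations $\bigoplus_{0 \leq i \leq n} k[i]$, and $\mathrm{id}_M$ factors through no finite stage. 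So $\mathrm{Ind}(\APerf(X)^{cn}) \to D(X)^{cn}$ is essentially surjective but not fully faithful, and the Ind-extension you build is a functor out of the wrong category, with no evident way to descend it (cocontinuously and symmetric monoidally) along this non-equivalence. Since the whole quasi-inverse construction rests on this step, the argument does not go through as written.

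The paper's proof circumvents exactly this point by working one truncation at a time. After the \cite[Lemma 5.4.6]{DAGXII} reduction one restricts to $S$ affine (both sides are limits over affines mapping to $S$), where $D(S)$ is left $t$-complete, so $D(S)^{cn} = \varprojlim_n D(S)^{cn}_{\leq n}$ and $\APerf(S)^{cn} = \varprojlim_n \APerf(S)^{cn}_{\leq n}$; on the Noetherian source, the truncation $\APerf(X)^{cn} \to \APerf(X)^{cn}_{\leq n}$ is a symmetric monoidal localization killing $\calO_X[n+1]$, so functors to $n$-truncated targets factor uniquely through it (and similarly $D(X)^{cn} \to D(X)^{cn}_{\leq n}$). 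This reduces everything to the level-$n$ statement, where the Ind-description \emph{does} hold: $D(X)^{cn}_{\leq n} = \mathrm{Ind}(\APerf(X)^{cn}_{\leq n})$, because in the $n$-truncated category almost perfect objects really are compact. If you replace your Stage A by this truncation-tower argument (keeping your Stage B), you recover the paper's proof; as stated, Stage A needs the $n$-truncated form of the compact-generation input, not the untruncated one.
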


\begin{proof}
We will abuse notation by letting $\Fun_\otimes(\bullet)'$ denote the full subcategory of symmetric monoidal functors which preserve connective and almost perfect complexes whenever this notion is well defined for the categories involved. The restriction $\Fun_{\otimes}^L(D(X),D(S))' \to \Fun_{\otimes}^L(D(X)^{cn},D(S)^{cn})'$ is an equivalence by \cite[Lemma 5.4.6]{DAGXII}, so it suffices to show that
\[\Fun_{\otimes}^L(D(X)^{cn},D(S)^{cn})' \to \Func_{\otimes} (\APerf(X)^{cn},\APerf(S)^{cn})\]
is an equivalence. Because the categories $D(S)^{cn}$ and $\APerf(S)^{cn}$ are defined formally as limits over all maps from affine schemes to $S$, it suffices to consider the case where $S$ is affine. The argument uses the same idea as the proof of \cite[Theorem 5.4.1]{DAGXII}: $D(S)$ is left $t$-complete, so $D(S)^{cn} = \varprojlim D(S)^{cn}_{\leq n}$, and every functor $D(X)^{cn} \to D(S)^{cn}_{\leq n}$ factors uniquely through $D(X)^{cn}_{\leq n}$ up to contractible choices because category $D(S)^{cn}_{\leq n}$ is equivalent to an $n+1$-category and $D(X)^{cn} \otimes_{\Sp} \tau_{\leq n} \Sp \simeq D(X)^{cn}_{\leq n}$, where $\Sp$ and $\tau_{\leq n} \Sp$ denote the symmetric monoidal $\infty$-categories of spaces and $n$-truncated spaces respectively.

We claim that the same is true for $\APerf$: For the affine spectral scheme $S$, $\APerf(S)^{cn} \simeq \lim \APerf(S)^{cn}_{\leq n}$ by definition.\footnote{$S$ is not Noetherian, so the $t$-structure on $D(S)$ does not descend to $\APerf(S)$. We are using the notation $\APerf(S)^{cn}_{\leq n}$ to denote the category of finitely $n$-presented objects of $D(S)$ (which are by definition connective and $n$-truncated), and by definition $\APerf(S)^{cn} \subset D(S)^{cn}$ consists of complexes all of whose truncations are finitely $n$-presented.} On the other hand $X$ is Noetherian, so we have a truncation functor $\APerf(X)^{cn} \to \APerf(X)^{cn}_{\leq n}$ which exhibits the latter as a localization of the former by the subcategory $\calO_X[n+1] \otimes \APerf(X)^{cn} \subset \APerf(X)^{cn}$. It follows that for any symmetric monoidal $\infty$-category $\calC$, restriction induces a fully faithful embedding
$$\Func_\otimes(\APerf(X)^{cn}_{\leq n},\calC) \to \Func_\otimes (\APerf(X)^{cn},\calC)$$
whose essential image consists of those functors annihilating $\calO_X[n+1]$. In particular if $\calC$ has finite colimits and $\mathbf{1}_{\calC}[n+1] \simeq 0$ in $\calC$, then this restriction functor is an equivalence.

The above observations imply that it suffices to show that
\[\Fun_\otimes^L (D(X)^{cn}_{\leq n},D(S)^{cn}_{\leq n})' \to \Func_\otimes (\APerf(X)^{cn}_{\leq n},\APerf(S)^{cn}_{\leq n})\]
is an equivalence for all $n\geq 0$. Because $D(X)^{cn}_{\leq n} = \Ind (\APerf(X)^{cn}_{\leq n})$ (see \cite[?]{halpern2014mapping}), the restriction map $\Fun_\otimes^L(D(X)^{cn}_{\leq n}, D(S)^{cn}_{\leq n}) \to \Func_\otimes(\APerf(X)^{cn}_{\leq n}, D(S)^{cn}_{\leq n})$ is an equivalence, and the claim follows because $\Func_\otimes(\APerf(X)^{cn}_{\leq n}, D(S)^{cn}_{\leq n})' \simeq \Fun^c_\otimes(\APerf(X)^{cn}_{\leq n},\APerf(S)^{cn}_{\leq n})$ by definition.
\end{proof}

\subsection{Deligne-Mumford stacks}
\label{sect:DM_stacks}

For separated classical Deligne-Mumford stacks, one does not even need to use the $t$-structure in the definition of tannakian.
 
\begin{corollary} \label{cor:tame_DM}
Let $X$ be a classical quasi-compact tame separated Deligne-Mumford stack. Then the restriction maps
$$\Map(S,X) \simeq \Fun_\otimes^L (D(X),D(S)) \simeq \Fun_\otimes^L (D(X)^c,\Perf(S))$$
are equivalences of $\infty$-categories for any prestack $S$, where $D(X)^c \subset D(X)$ denotes the subcategory of compact objects.
\end{corollary}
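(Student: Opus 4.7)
The plan is to derive both equivalences from \autoref{thm:CompactlyGeneratedTD} by showing that, in the tame classical separated DM setting, the hypothesis on functors preserving connective objects becomes automatic.

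First I would verify the hypotheses of \autoref{thm:CompactlyGeneratedTD}. Separatedness forces the diagonal of $X$ to be affine, hence quasi-affine; since $X$ is DM, the diagonal is additionally quasi-finite and separated, so Hall--Rydh's theorem \cite{hall2014perfect} yields that $D(X)$ is compactly generated with $D(X)^c = \Perf(X)$ and $D(X) \simeq \Ind(\Perf(X))$. Invoking \autoref{thm:CompactlyGeneratedTD}, the map $\Map(S,X) \to \Fun^L_\otimes(D(X),D(S))$ is fully faithful with essential image the cocontinuous symmetric monoidal functors preserving connective objects. The second equivalence is then a formal consequence of compact generation: the identification $D(X) \simeq \Ind(\Perf(X))$ gives $\Fun^L_\otimes(D(X),D(S)) \simeq \Func_\otimes(\Perf(X), D(S))$, and any symmetric monoidal functor preserves dualizable objects, which in $D(S)$ coincide with $\Perf(S)$. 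Hence such functors factor through $\Perf(S)$, producing the target $\Fun^L_\otimes(D(X)^c, \Perf(S))$ of the statement, interpreted as symmetric monoidal functors preserving finite colimits.

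For the first equivalence, it remains to show that every $F \in \Fun^L_\otimes(D(X), D(S))$ automatically preserves connectivity. My plan is a two-step reduction: first, establish $D(X)^{cn} \simeq \Ind(\Perf(X)^{cn})$ --- this uses the resolution property enjoyed by classical quasi-compact tame separated DM stacks (Totaro/Gross), which lets us write each connective almost perfect complex as a filtered colimit of truncations of vector bundle resolutions, each of which is connective perfect. Since $F$ is cocontinuous, it then suffices to check $F(\Perf(X)^{cn}) \subseteq D(S)^{cn}$. Second, because $F(P)$ is perfect for $P \in \Perf(X)$, its connectivity can be detected fiberwise on $S$, reducing to the case $S = \Spec k$ for a field $k$.

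The hard part, and the main obstacle, is to show that for any cocontinuous symmetric monoidal $F: D(X) \to D(k)$ one has $F(\Perf(X)^{cn}) \subseteq D(k)^{cn}$. The tame DM hypothesis is essential here: finite linearly reductive stabilizers force every line bundle on $X$ to satisfy a relation $L^{\otimes n} \simeq \pi^* M$ for some line bundle $M$ on the coarse moduli, so applying $F$ constrains the possible shift of $F(L) \in \Perf(k)$ to vanish. A cleaner implementation of this idea is to exhibit $\Perf(X)^{cn}$ as the smallest full subcategory of $D(X)$ containing $\calO_X$ and closed under finite colimits, retracts, and the shift $[-1]$; each of these operations is preserved by $F$, and $F(\calO_X) = \calO_S$ is connective, so $F$ propagates connectivity throughout $\Perf(X)^{cn}$. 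This structural characterization of $\Perf(X)^{cn}$ --- which uses the resolution property together with linearly reductive stabilizers in an essential way --- is the most delicate part, and is where the tame DM hypothesis is indispensable, as illustrated by the failure of automatic connectivity for $B(\G_m)$ highlighted in the example preceding this subsection.
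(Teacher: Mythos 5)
Your first two steps track the paper's proof: Hall--Rydh gives compact generation, \autoref{thm:td_perfect_stacks} gives full faithfulness with image the connectivity-preserving functors, and the factorization through dualizables gives the second equivalence. (One small correction there: quasi-finite separated diagonal gives compact generation of $D(X)$, but \emph{not} $D(X)^c = \Perf(X)$ --- in characteristic $p$ the structure sheaf of a non-tame stack need not be compact; it is tameness, not \cite{hall2014perfect}, that makes $X$ perfect, which is exactly how the paper uses the tame hypothesis.) The genuine gap is in your ``vacuousness'' step, i.e.\ that every cocontinuous symmetric monoidal $F \colon D(X) \to D(S)$ automatically preserves connectivity. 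First, you invoke the resolution property for arbitrary classical quasi-compact tame separated Deligne--Mumford stacks. Totaro and Gross only \emph{characterize} stacks with the resolution property (as quotients of quasi-affine schemes by $\GL_N$); they do not prove that such DM stacks have it, and in this generality the question is essentially the open ``quotient stack'' problem. The paper avoids this entirely: it passes to the coarse moduli space $p \colon X \to \bar{X}$, works \'etale-locally on $\bar{X}$ where $X$ becomes a quotient of an affine scheme by a finite group (and so has enough vector bundles), and transports $F$ along this cover using that $F \circ p^\ast \colon D(\bar{X}) \to D(S)$ is geometric by the Tannaka duality for algebraic spaces of \cite{bhatt2014algebraization}.

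Second, the core connectivity argument does not work as stated. Your ``cleaner implementation'' --- that $\Perf(X)^{cn}$ is the smallest full subcategory containing $\calO_X$ and closed under finite colimits, retracts and shifts --- is false as soon as $X$ has a nontrivial stabilizer: for $X = B\mu_n$ over the complex numbers, a nontrivial character $\chi$ satisfies $\Hom(\calO_X, \chi[i]) = 0$ for all $i$, so $\chi$ lies in the right orthogonal of $\calO_X$ and cannot be built from $\calO_X$ by those operations, yet $\chi \in \Perf(X)^{cn}$. Your first implementation is also incomplete on two counts: it only treats line bundles, whereas the reduction requires connectivity of $F$ on all (connective perfect complexes built from) vector bundles; and even for a line bundle $L$ with $L^{\otimes n} \simeq p^\ast M$, the relation constrains the shift of $F(L)$ only if you already know that $F(p^\ast M)$ sits in degree $0$ --- i.e.\ you need control of the composite $D(\bar{X}) \to D(X) \to D(S)$, which is precisely the input from \cite{bhatt2014algebraization} that the paper uses and that your argument never establishes. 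Finally, your claim that tameness is ``indispensable'' for this step is contradicted by the paper's lemma, which proves automatic preservation of connective and almost perfect complexes for \emph{all} quasi-compact separated classical DM stacks (tameness enters only to identify $D(X)^c$ with $\Perf(X)$); the example of $B(\G_m)$ does not bear on this, since it is not Deligne--Mumford.
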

\begin{proof}
For any $X$ satisfying the hypotheses of the theorem, $D(X)$ is compactly generated by \cite{hall2014perfect}, so $X$ is tannakian by \autoref{thm:td_perfect_stacks}. Furthermore, $X$ is perfect because it is tame. It follows that
$$\Fun^L_\otimes(D(X),D(S)) \simeq \Func_\otimes(\Perf(X),D(S)),$$
and any symmetric monoidal functor $\Perf(X) \to D(S)$ factors uniquely through the full subcategory of dualizable objects $\Perf(S) \subset D(S)$. The condition that a symmetric monoidal functor $D(X) \to D(S)$ preserves connective and almost perfect complexes is vacuous by the following lemma.
\end{proof}

\begin{lemma}
Let $X$ be a quasi-compact separated classical Deligne-Mumford stack, and let $S$ be an arbitrary pre-stack. Any cocontinuous symmetric monoidal functor $F : D(X) \to D(S)$ preserves connective complexes and almost perfect complexes.
\end{lemma}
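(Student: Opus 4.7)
I would split the claim into preservation of perfect, almost perfect, and connective complexes. By the same argument used at the end of the proof of \autoref{prop:nilpotent_thickening}, preservation of both connective and almost perfect complexes can be tested after composing $F$ with symmetric monoidal cocontinuous pullback functors $D(S) \to D(\Spec R)$ along test affine schemes, and further with the truncation $D(\Spec R) \to D(\Spec \pi_0 R)$; thus one may assume $S = \Spec R$ is a classical affine scheme. In this reduced setting, $F$ automatically restricts to $\Perf(X) \to \Perf(R)$, because any cocontinuous symmetric monoidal functor between stable presentable symmetric monoidal $\infty$-categories preserves dualizable objects, and dualizable objects in $D(X)$ and $D(R)$ are precisely the perfect complexes.

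The case where $X = \Spec A$ is itself a classical affine scheme is immediate: any cocontinuous symmetric monoidal $F : D(A) \to D(R)$ is extension of scalars along the $E_\infty$-ring map $A \to R$ determined by symmetric monoidality on the unit, and such extension preserves connective and almost perfect complexes. For general $X$, my plan is to choose an \'etale surjection $g : U \to X$ with $U$ a classical affine scheme (using that $X$ is a quasi-compact DM stack) and reduce to the affine case via descent. Connectivity and almost perfectness are \'etale-local on $X$, so they can be detected after the $t$-exact pullback $g^\ast$. Since $X$ is separated, $g$ is in addition quasi-affine, so by \autoref{thm:QAchar} the pushforward $g_\ast \calO_U \in \CAlg(D(X))$ is a quasi-affine algebra. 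Applying $F$ and using \autoref{thm:QAchar} in reverse, I expect $F(g_\ast \calO_U)$ to classify a quasi-affine \'etale cover $h : V \to \Spec R$, producing a compatible cocontinuous symmetric monoidal lift $\bar F : D(U) \to D(V)$ that falls into the affine case just handled; preservation of the desired properties by $F$ is then inherited from $\bar F$ through \'etale descent for connectivity and almost perfectness.

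\emph{Main obstacle.} The hardest step is verifying that $F(g_\ast \calO_U)$ is actually a quasi-affine algebra in the sense of \autoref{thm:QAchar} --- that is, a compact-localization of some connective $E_\infty$-algebra which is moreover bounded above as a complex. Compactness in $\CAlg(D(R))$ and the vanishing $L_{F(g_\ast \calO_U)/R} \simeq 0$ follow transparently from the analogous properties of $g_\ast \calO_U$ via cocontinuity and symmetric monoidality of $F$ (cf.\ \autoref{prop:compact_algebras}). The delicate point is verifying that $F(g_\ast \calO_U)$ is a localization of a \emph{connective} algebra and is bounded above, which seems to require a finer local structure of classical separated quasi-compact DM stacks (such as a local quotient presentation $[U'/\Gamma]$ by a finite group, or the Keel--Mori coarse moduli space) to produce an affine \'etale neighborhood of each point of $X$ whose pushforward can be explicitly controlled under $F$.
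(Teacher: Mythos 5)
There is a genuine gap, and you have correctly located it yourself: your strategy reduces the lemma to showing that $F(g_\ast \calO_U)$ is a bounded-above compact localization of a \emph{connective} algebra and that the resulting $V \to \Spec R$ is a faithfully flat (surjective) \'etale cover, but these are exactly the ``connectivity/flatness'' assertions that make the lemma nontrivial, and no mechanism is offered to prove them. Note that in \autoref{thm:td_perfect_stacks} the corresponding steps (connectivity of the image algebra, faithful flatness of the induced cover) are proved \emph{using} the hypothesis that $F$ preserves connective complexes; since that hypothesis is precisely the conclusion here, your reduction is circular as it stands. A secondary problem is the first reduction: composing with $D(\Spec R) \to D(\Spec \pi_0 R)$ does not detect connectivity or almost perfectness of objects that are not bounded below (e.g.\ $KU$ over $ku$ becomes $0$ after base change to $\Z$), and the argument at the end of \autoref{prop:nilpotent_thickening} only justifies such a reduction for the property ``preserves fpqc quasi-affine algebras,'' via deformation theory, not for preservation of connectivity of arbitrary complexes.

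The paper supplies the missing idea through the coarse moduli space $p : X \to \bar{X}$ (Keel--Mori/Rydh), not through an affine cover of $X$ itself. Since $\bar{X}$ is a quasi-compact separated \emph{algebraic space}, the composite $D(\bar{X}) \to D(X) \xrightarrow{F} D(S)$ is automatically geometric by the unconditional Tannaka duality for algebraic spaces in \cite{bhatt2014algebraization}; this is what produces, for free, an affine \'etale surjection $V \to \Spec(R)$ and a lift $F' : D(X') \to D(V)$, where $X' = X \times_{\bar{X}} U$ is \'etale-locally a quotient of an affine scheme by a finite group. For such a quotient one does not argue via quasi-affine algebras at all: $X'$ has enough vector bundles, so it suffices to show $F(E)$ is connective for a vector bundle $E$; since $F(E)$ is perfect, connectivity can be tested on fibers and via tensor powers, and for $n \gg 0$ the bundle $E^{\otimes n}$ descends to the coarse space, where geometricity of the composite again gives connectivity. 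If you want to salvage your outline, you must import this input (coarse space plus local finite-quotient presentations) — which is essentially what you gesture at in your last sentence — at which point you are reproducing the paper's proof rather than giving an alternative one.
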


\begin{proof}
Let $p : X \to \bar{X}$ be the coarse moduli space, which exists in our level of generality by \cite{rydh2013existence}. First consider the case where $X$ is a quotient of an affine scheme by a finite group. Then $X$ has enough vector bundles, so every connective complex can be written as a colimit of connective perfect complexes, and every almost perfect complex is equivalent to a right-bounded complex of vector bundles. Thus it suffices to show that $F(E)$ is connective for any vector bundle $E$. $F(E)$ is perfect, because it is dualizable, and thus for any $n >0$, $F(E)$ is connective if and only if $F(E)^{\otimes n} \simeq F(E^{\otimes n})$ is connective.\footnote{Connectivity for perfect complexes can be detected on fibers by Nakayama's lemma, and over a field this claim is immediate.} However, for a sufficiently large $n$, $E^{\otimes n} \simeq p^\ast (\tilde{E}^{\otimes n})$ for some vector bundle $\tilde{E}$ on $\bar{X}$. Thus $F(E^{\otimes n})$ is a vector bundle, because the composition $D(\bar{X}) \to D(X) \to D(S)$ is geometric by \cite{bhatt2014algebraization}.

For a general $X$, let $U \to \bar{X}$ be an \'{e}tale cover by an affine scheme such that the base change $X' := X \times_{\bar{X}} U$ is a global quotient of an affine scheme by a finite group (an affine quotient presentation is provided by the proof, but not the statement, of \cite[Lemma 2.2.3]{abramovich2002compactifying}). Because the composition $F \circ p^\ast : D(\bar{X}) \to D(\Spec(R))$ is geometric, the base change of $U$ is gives a surjective \'{e}tale map $V \to \Spec(R)$, where $V$ is affine, and a symmetric monoidal functor $F' : D(X') \to D(V)$ making the following diagram commute,
$$\xymatrix{D(X) \ar[d]^{p^\ast} \ar[r]^-F & D(\Spec(R)) \ar[d] \\ D(X') \ar[r]^-{F'} & D(V) }.$$
More precisely, $X' \to X$ affine, and because $\bar{X}$ is a quasi-compact separated algebraic space, the functor $F \circ p^\ast$ is geometric and $F(\calO_{X'})$ is a connective $R$-algebra. $V$ is the corresponding affine scheme, and the map $D(X') \to D(V)$ corresponds to the symmetric monoidal functor $D(X,\calO_{X'}) \to D(\Spec(R)), F(\calO_{X'}))$ induced by $F$. It follows from the commutative square that $F$ preserves connective complexes and almost perfect complexes if and only if $F'$ does. So we have reduced to the claim to the case of a global quotient of an affine scheme by a finite group, treated above.
\end{proof}

\section{Stacks with compactly generated derived categories}

For any fpqc-algebraic stack $X$, compact objects in $D(X)$ are perfect and hence dualizable.\footnote{This is a consequence of the fact that given an atlas $\pi : U \to X$, the pushforward functor commutes with filtered colimits. Hence compact objects are locally compact by the adjunction between $\pi_\ast$ and $\pi^\ast$.} Let $D(X)^c \subset D(X)$ be the full subcategory spanned by compact objects. Our goal in this section is to prove a Tannaka duality results for stacks $X$ with the property that  $D(X)$ is {\em compactly generated}, i.e., the natural functor $\Ind(D(X)^c) \to D(X)$ is an equivalence or, equivalently, that each $K \in D(X)$ can be written as a filtered colimit of compact objects.

\begin{theorem} \label{thm:td_perfect_stacks}
Let $X$ be an fpqc-algebraic stack with quasi-affine diagonal such that $D(X)$ is compactly generated. Then 
\[ \Hom(S,X) \to \Fun^L_{\otimes}(D(X),D(S))\]
is fully faithful with essential image given by functors preserving connective complexes.
\end{theorem}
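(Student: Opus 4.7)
Full faithfulness of the pullback map follows directly from \cite[Proposition 3.3.11]{DAGVIII}, since an fpqc-algebraic stack with quasi-affine diagonal is quasi-geometric. The ``only if'' direction of the essential image description is automatic, so the task is to show that any cocontinuous symmetric monoidal $F \colon D(X) \to D(S)$ preserving connective complexes is geometric. Since the question is fpqc-local on $S$, I may reduce to $S = \Spec R$ being affine. The plan is then to verify the criterion of \autoref{prop:covering}: produce an affine $U$ and an fpqc quasi-affine $u \colon U \to X$, an fpqc quasi-affine $v \colon V \to S$, and an $\calO_S$-algebra map $\calO_V \to F(\calO_U)$.

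Take $u \colon U = \Spec A \to X$ to be any faithfully flat affine atlas, which exists by fpqc-algebraicity; it is automatically quasi-affine since the graph $U \to U \times X$ is a base change of the (quasi-affine) diagonal of $X$. Set $\calA := \calO_U \in \CAlg(D(X))$; by \autoref{thm:QAchar}, $\calA$ is bounded above and a compact-localization of its connective cover $\calA' := \tau^{\leq 0}\calA$. I claim that $\calB := F(\calA)$ is a quasi-affine algebra on $S$; granting this, \autoref{thm:QAchar} yields $\calB \simeq \calO_V$ for some quasi-affine $v \colon V \to S$, and the identity is the required algebra map. To prove the claim via \autoref{thm:QAchar}: $F(\calA')$ is connective and $\calB$ is bounded above (from $F$ preserving connectivity and commuting with shifts); $\calB$ is a localization of $F(\calA')$ (symmetric monoidality and cocontinuity transfer the identity $\calA \otimes_{\calA'} \calA \simeq \calA$). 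The delicate point is the compactness of $\calB$ over $F(\calA')$. This is where compact generation of $D(X)$ enters decisively: since $D(X)^c = \Perf(X)$ consists of dualizable objects, and symmetric monoidal functors preserve dualizability, one has $F(\Perf(X)) \subseteq \Perf(S) = D(S)^c$, so $F$ preserves compact objects of $D(X)$. Consequently, the induced functor $F_\ast \colon \CAlg(D(X)) \to \CAlg(D(S))$ preserves compact algebras both absolutely and relatively over a fixed base algebra, since its right adjoint is computed by applying $F^R$ to underlying objects and $F^R$ preserves filtered colimits exactly when $F$ preserves compacts. Hence $\calB$ is $F(\calA')$-compact, completing the verification.

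With $v \colon V \to S$ quasi-affine in hand, it remains to check that $v$ is faithfully flat so that the descent-theoretic argument of \autoref{prop:covering} applies. Faithful flatness of $v$ should follow from that of $u$ combined with cocontinuity and connectivity-preservation of $F$: the identity $F(\calO_U \otimes_{\calO_X} -) \simeq \calO_V \otimes_{\calO_S} F(-)$ together with compact preservation transports the needed tensor-exactness and conservativity properties from $D(X)$ to $D(S)$, and these may be checked fpqc-locally on $S$ via a further affine base change. The main obstacle in the proof is the compactness-preservation step and this last flatness transfer; both depend essentially on the compact generation hypothesis, which is what lets one replace Lurie's flatness condition in \autoref{thm:LurieSpectralTD} by the mere preservation of connectivity. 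Without compact generation, $F$ need not preserve the dualizability-based finiteness, and one must resort to the pseudo-coherence hypothesis of \autoref{thm:NoethTD}.
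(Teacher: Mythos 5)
Your first half is essentially sound and close to the paper's route: full faithfulness via \cite[Proposition 3.3.11]{DAGVIII}, reduction to affine $S$, and the plan of verifying \autoref{prop:covering} by showing $F(\calO_U)$ is a quasi-affine algebra. In particular your compactness step is fine and even a bit more direct than the paper's: since compacts in $D(X)$ are dualizable and $D(X)$ is compactly generated, $F$ preserves compacts, so its right adjoint $G$ preserves filtered colimits; as filtered colimits of (relative) algebras are computed on underlying objects, the induced functor on algebra categories over $\calA' = \tau^{\leq 0}\calO_U$ has a cocontinuous right adjoint and hence preserves compact objects, giving $F(\calO_U)$ compact over $F(\calA')$. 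The paper reaches the same conclusion after first invoking the projection formula and the Barr--Beck--Lurie identification $D(S) \simeq D(X, G\calO_S)$, but it needs those tools anyway for the next step.

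The genuine gap is that next step: faithful flatness of $V \to S$, i.e.\ of $F(\calO_U)$ over $\calO_S$, which is exactly the crux of the theorem (it is where Lurie's flatness hypothesis gets replaced by mere preservation of connectivity), and your proposed mechanism does not work as stated. The identity $F(\calO_U \otimes_{\calO_X} K) \simeq F(\calO_U) \otimes F(K)$ only controls tensoring against objects in the essential image of $F$, whereas flatness and faithfulness require controlling $F(\calO_U) \otimes M$ for arbitrary $M \in D(S)$ (e.g.\ discrete $R$-modules), which need not lie in that image; moreover preservation of connectivity is a one-sided (right $t$-exactness) condition, so no amount of fpqc-localization on $S$ turns ``tensor-exactness and conservativity transport'' into an argument. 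The paper handles this by working with the right adjoint $G$ (cocontinuous because $F$ preserves compacts): it proves the projection formula $G(F(K)\otimes M) \simeq K \otimes G(M)$, uses Barr--Beck--Lurie to identify $F$ with base change along $\calO_X \to G\calO_S$ (which yields the ``faithful'' part), and establishes the key claim that $M \in D(S)$ is coconnective if and only if $G(M)$ is --- one direction using that $F$ preserves connectivity, the other using that $D^{<0}(S)$ is generated under colimits by $\calO_S[1]$ for affine $S$. Flatness of $\calO_U$ over $\calO_X$ then transports to $F(\calO_U)$ via $G(F(\calO_U)\otimes M) \simeq \calO_U \otimes G(M)$, together with the observation that coconnectivity in $D(V)$ can be detected after pushforward to $S$. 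Some argument of this kind is required; as written, your flatness paragraph is an assertion rather than a proof.
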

The proof below follows the one given in \cite{bhatt2014algebraization} for algebraic spaces; the argument presented here uses the classification of quasi-affine maps in terms of $D(X)$, and is consequently a bit simpler. 
\begin{proof}[Proof of \autoref{thm:td_perfect_stacks}]
Full faithfulness is \cite[Proposition 3.3.11]{DAGVIII}. For essential surjectivity, fix some $F \in \Fun^L_{\otimes}(D(X),D(S))$. By descent, we may assume $S$ is affine, so compact objects in $D(S)$ coincide with the perfect ones and also the dualizable ones. Then $F$ admits a right adjoint $G:D(S) \to D(X)$ by cocontinuity of $F$. As $D(X)$ is compactly generated, and because compacts are dualizable, it follows that $F$ preserves compactness, so $G$ is cocontinuous. Moreover, using the compact generation of $D(X)$, one checks that the pair $(F,G)$ satisfy a projection formula: $G(F(K) \otimes L) \simeq K \otimes G(L)$ for $K \in D(X)$ and $L \in D(S)$.  Then Barr-Beck-Lurie shows that $G$ induces an equivalence $\Phi:D(S) \simeq D(X,G\calO_S)$ such that $F$ corresponds to the base change functor $-\otimes G\calO_S : D(X) \to D(X,G\calO_S)$.

In order to show that $F$ is geometric, it suffices by \autoref{prop:covering} to show that for a faithfully flat quasi-affine cover $f:U \to X$ with $U$ affine (which exists because $X$ is perfect), $F(\calO_U)$ is a faithfully flat quasi-affine $\calO_S$-algebra. Recall that $\calO_U$ is the compact-localization of a connective algebra $\calA \in \CAlg(D(X))$. It is immediate that $F(\calO_U)$ is an eventually connective localization of the connective algebra $F(\calA)$, so we must verify the compactness of $F(\calO_U)$ as an $F(\calA)$-algebra. For this, note first that $D(X,\calA)$ is compactly generated: any $K \in D(X,\calA)$ is a retract of $K \otimes_{\calO_X} \calA$, and the latter is a filtered colimit of compact objects by hypothesis. It follows that $\CAlg(D(X,\calA))$ is compactly generated as well. Now note that $F$ induces a functor $F_{\calA}:D(X,\calA) \to D(S,F(\calA))$. Moreover, under $\Phi$, this is identified with the base-change functor $D(X,\calA) \to D(X,\calA \otimes G\calO_S)$. The right adjoint of this last functor commutes with filtered colimits because $G$ does. It follows that the induced functor $\CAlg(D(X,\calA)) \to \CAlg(D(S,F(\calA)))$ has a cocontinuous right adjoint, and hence must preserve compactness, so $F(\calO_U)$ is compact over $F(\calA)$. Thus, $F(\calO_U) \simeq \calO_{F(U)}$ for some quasi-affine morphism $F(U) \to S$.

	It remains to show that $F(U) \to S$ is faithfully flat. The ``faithful'' part is clear once we use the projection formula to identify the functor $F(\calO_U) \otimes -$ with $\calO_U \otimes -$ under $\Phi$. An object in $D(F(U)) \simeq D(S,F(\calO_U))$ is coconnective if and only if its pushforward to $S$ (i.e. the underlying object of $D(S)$) is coconnective.\footnote{Indeed, it suffices to show this for a quasi-compact open immersion $j : U \hookrightarrow \Spec(R)$. In this case, for any $M \in D^{<0}(U)$, $j^\ast (\tau^{<0} j_\ast M) \simeq j^\ast j_\ast M \simeq M$, so any connective object is the restriction of a connective object. The claim follows by adjunction and the characterization of $D^{\geq 0}(U)$ as the right orthogonal of $D^{<0}(U)$.} Thus, to show that the quasi-affine morphism $F(U) \to S$ is flat it suffices to show that $F(\calO_U) \otimes -$ preserves $D^{\geq 0}(S)$. In fact for any $K \in D(X)$ with the property that $K \otimes -$ preserves $D^{\geq 0}(X)$, we can show that $F(K) \otimes -$ preserves $D^{\geq 0}(S)$. For $M \in D^{\geq 0}(S)$ consider $G(F(K) \otimes M) \simeq K \otimes G(M)$. The fact that $F(K) \otimes M \in D^{\geq 0}(S)$ is now a consequence of the following:
	
	\begin{claim} 
		$M \in D(S)$ is coconnective if and only if $G(M) \in D(X)$ is coconnective.
	\end{claim}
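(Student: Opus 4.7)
The plan is to prove the biconditional via the tensor-hom adjunction $F \dashv G$, testing coconnectivity against shifts of the unit. Since we have already reduced to the case $S = \Spec R$ affine by fpqc descent earlier in the proof, I can use that $\calO_S = R$ is a compact generator of $D(S) = D(R)$ and that the $t$-structure on $D(S)$ is characterized by mapping spaces out of shifts of $\calO_S$; combined with $\calO_S = F(\calO_X)$ (from $F$ being unital symmetric monoidal), this is exactly what is needed to pull the generators through $F \dashv G$.

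For the forward implication, I would first note that the hypothesis on $F$ (preservation of connective complexes) together with commutation with the shift $[1]$ yields $F(D^{\leq -1}(X)) \subset D^{\leq -1}(S)$. Then for any $K \in D^{\leq -1}(X)$ and $M \in D^{\geq 0}(S)$, the adjunction identifies $\Map_{D(X)}(K, G(M)) \simeq \Map_{D(S)}(F(K), M)$, whose right-hand side is contractible by the defining property of the $t$-structure. Since this holds for all such $K$, we conclude $G(M) \in D^{\geq 0}(X)$.

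For the reverse implication, I would use that $M \in D^{\geq 0}(S)$ is equivalent to the mapping space $\Map_{D(S)}(\calO_S[k], M)$ being contractible for all $k \geq 1$ (the $i$-th homotopy group of this mapping space computes $\pi_{i+k}(M)$). Using $\calO_S[k] = F(\calO_X[k])$ and adjunction, $\Map_{D(S)}(\calO_S[k], M) \simeq \Map_{D(X)}(\calO_X[k], G(M))$; the assumption $G(M) \in D^{\geq 0}(X)$ makes the latter contractible for all $k \geq 1$ since $\calO_X[k] \in D^{\leq -1}(X)$ in that range. Hence $M \in D^{\geq 0}(S)$.

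I do not anticipate a serious obstacle: both directions follow formally from the adjunction once we exploit that $\calO_S = F(\calO_X)$ is a compact generator of $D(S)$ characterizing its $t$-structure. The argument uses nothing about $X$ beyond what has already been set up, and in particular does not require compact generation of $D(X)$ or any further hypotheses on $F$.
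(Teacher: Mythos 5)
Your argument is correct and is essentially the paper's own proof: both directions are run through the adjunction $F \dashv G$, using that $F$ preserves connectivity for the forward implication and that $\calO_S \simeq F(\calO_X)$ detects coconnectivity over the affine $S$ for the converse. The only cosmetic difference is that you test against all shifts $\calO_S[k]$, $k \geq 1$, while the paper invokes that $D^{<0}(S)$ is generated under colimits by $\calO_S[1]$.
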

		
	\begin{proof} $D^{\geq 0}(X)$ is right-orthogonal to $D^{< 0}(X)$, so we want $\Map(K,G(M)) = 0$ for $K \in D^{< 0}(X)$. By adjunction, this space is identified with $\Map(F(K),M)$. Since $F$ preserves connectivity, we deduce that $F(K) \in D^{< 0}(S)$, and thus $\Map(F(K),M) = 0$ by the coconnectivity of $M$. Conversely, fix an $M \in D(S)$ such that $G(M) \in D^{\geq 0}(X)$; we want to conclude $M \in D^{\geq 0}(S)$. As $D^{< 0}(S)$ is generated by $\calO_S[1]$ under colimits, it suffices to show that $\Map(\calO_S[1],M) = 0$. By adjunction, this amounts to $\Map(\calO_X[1],G(M)) = 0$, which follows from the hypothesis on $M$. \end{proof}  \qedhere \end{proof}

\section{Noetherian quasi-geometric stacks}

The goal of this section is to prove the following:

\begin{theorem} \label{thm:td_Noetherian}
If $X$ is Noetherian with a quasi-affine diagonal and $S$ is an arbitrary prestack, then
\[ \Hom(S,X) \to \Fun^L_{\otimes}(D(X),D(S)) \]
is fully faithful with essential image given by functors preserving connective complexes and almost-perfect complexes.
\end{theorem}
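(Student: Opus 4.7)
The plan is to adapt the strategy used for \autoref{thm:td_perfect_stacks} to the Noetherian setting, replacing perfect complexes (which may not exist in sufficient supply) with almost perfect complexes. Full faithfulness is immediate from \cite[Proposition 3.3.11]{DAGVIII}, since quasi-affine diagonal implies quasi-geometric. For essential surjectivity, fix $F \in \Fun^L_\otimes(D(X),D(S))$ preserving connective and almost perfect complexes; since essential surjectivity can be checked on affine $S$ and, by \autoref{prop:nilpotent_thickening}, reduces to the case of classical $X$ and classical $S = \Spec R$, we may assume this setup. Using Noetherian approximation ($R \simeq \colim R_i$ with $R_i$ of finite type over $\Z$) together with continuity of the relevant functor categories (\autoref{lem:small_cat_Noetherian}), we may further assume $R$ is itself Noetherian.

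Next, I would choose a faithfully flat quasi-affine cover $p : U \to X$ with $U$ affine; existence for Noetherian stacks with quasi-affine diagonal can be arranged by a Noetherian induction / residual gerbe stratification (or by combining with fpqc descent for the glueing step). By \autoref{thm:QAchar}, $\calO_U$ is a bounded-above compact-localization of the connective algebra $\calA := \tau^{\leq 0} \calO_U \in \CAlg(D(X))$; since $U \to X$ is of finite presentation and $X$ is Noetherian, both $\calA$ and the truncations of $\calO_U$ lie in $\APerf(X)^{cn}$. Applying $F$, the algebra $F(\calA)$ is connective and almost perfect on $S$, and the localization condition $\calA \otimes \calO_U \simeq \calO_U$ is purely tensor-algebraic, so $F(\calA) \to F(\calO_U)$ remains a localization with $F(\calO_U)$ bounded above. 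By \autoref{thm:QAchar} on $S$ it now suffices to exhibit $F(\calO_U)$ as compact over $F(\calA)$; once this is done, we obtain a quasi-affine morphism $V \to S$ with $\calO_V \simeq F(\calO_U)$, and the faithful-flatness of $V \to S$ follows by exactly the argument used at the end of the proof of \autoref{thm:td_perfect_stacks} (preservation of connectivity plus the characterization of coconnectivity along quasi-affine maps). Invoking \autoref{prop:covering} with the resulting algebra map $\calO_V \to F(\calO_U)$ then produces the desired map $S \to X$.

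The main obstacle is the compactness of $F(\calO_U)$ as an $F(\calA)$-algebra. In the compactly generated case one obtained this via a Barr--Beck--Lurie argument using the cocontinuous right adjoint $G$ of $F$, but here $D(X)$ need not be compactly generated, so no such $G$ with useful properties is available. The approach I would take instead is to use an explicit Koszul-type presentation of $\calO_U$ over $\calA$ in the style of \autoref{ex:PresentationQCOpen}: locally on the affine cover $\Spec_X(\calA)$, write $\calO_U$ as a finite pushout in $\CAlg(D(X,\calA))$ of free symmetric algebras on (duals of) perfect Koszul complexes. Such a presentation is manifestly compact and is built out of finite-colimit operations on almost perfect complexes. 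Since $F$ preserves finite colimits, tensor structure, and almost perfect complexes, it transports this presentation to an analogous compact presentation of $F(\calO_U)$ over $F(\calA)$. The technical heart of the argument is thus to produce a sufficiently functorial and local version of \autoref{ex:PresentationQCOpen}, and to glue the resulting $V$'s along an affine atlas of $X$ using the fpqc descent properties of quasi-affine morphisms recorded in \autoref{lem:fpqcDescentQuasiAffine}.
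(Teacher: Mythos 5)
There are genuine gaps at the three places where your argument actually has to depart from the compactly generated case. First, your claim that $\calA := \tau^{\leq 0}\calO_U$ and the truncations of $\calO_U$ lie in $\APerf(X)^{cn}$ is false: for a quasi-affine atlas $p:U \to X$ the pushforward $p_*\calO_U$ is almost never coherent (already for $\Spec(k) \to B\G_m$ it is the regular representation $\bigoplus_n \calO(n)$), so $F$ is not assumed to behave well on $\calA$ at all. This is exactly what \autoref{lem:lafp_quasiaffine} is for in the paper: one writes $\tau^{\leq 0}\calO_U$ as a filtered union of coherent subalgebras and shows that some almost finitely presented $\calA_\alpha \to \calO_U$ is already a compact localization; your proposal has no substitute for this step. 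Second, the ``Koszul presentation'' strategy for compactness of $F(\calO_U)$ over $F(\calA)$ does not globalize: the ideal of the closed complement $Z \subset \underline{\Spec}_X(\calA)$ need not be cut out globally by a map from a perfect (or any finite) complex on a stack without the resolution property, so there is in general no global finite presentation of $\calO_U$ built from almost perfect complexes for $F$ to transport, and ``glue the local presentations'' begs the question, since it is precisely the compatibility of $F$ with localization that is being proved. The paper avoids any presentation: \autoref{lem:preserve_localizations} identifies $F(\Gamma_Z\calA) \to F(\calA)$ with $\Gamma_{F(Z)}F(\calA) \to F(\calA)$ using that $D^b_Z$ is generated by pushforwards from $Z$ (Noetherianity) together with \autoref{lem:KernelLocalization}, which exhibits $F(\calO_U)$ directly as the structure sheaf of the open complement of $F(Z)$.

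Third, and most seriously, faithful flatness of $F(U) \to S$ does not ``follow by exactly the argument used at the end of the proof of \autoref{thm:td_perfect_stacks}'': that argument rests on the Barr--Beck identification $\Phi$ and the projection formula $G(F(K)\otimes L) \simeq K \otimes G(L)$, both of which were obtained from compact generation of $D(X)$ (the right adjoint $G$ is cocontinuous and the projection formula is checked on compact, hence dualizable, generators). Without compact generation neither is available, and this is precisely the point where the Noetherian case requires new ideas. In the paper this occupies the entire second half of the proof: one first reduces flatness over a general classical $R$ to the case of field-valued points via the smoothness criterion \autoref{lem:SmoothMapCharacterization} and the preservation of K\"ahler differentials (\autoref{lem:preserve_smooth}), and then over a field one runs a Noetherian induction using the Drinfeld--Gaitsgory stratification by strata admitting finite fppf covers by quotient stacks, the quotient-stack case (\autoref{lem:global_quotient}, via Lurie's theorem and writing flat sheaves as filtered colimits of vector bundles), the finite fppf descent lemma (\autoref{lem:finite_etale}), and an open/closed dichotomy on $F(\calO_Z)$. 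None of this is replaced by anything in your outline, so the proposal as it stands does not close the essential surjectivity argument. (Minor additional points: the Noetherian approximation reduction on $R$ is unnecessary and not justified as stated, and a smooth affine atlas $U \to X$ is automatically quasi-affine here, so no residual gerbe stratification is needed for its existence.)
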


Our strategy for proving \autoref{thm:td_Noetherian} is to handle the case of quotient stacks directly, and then reduce to this case using suitable stratifications. The next lemma accomplishes the first of these:

\begin{lemma} \label{lem:global_quotient}
	Let $X = Z/\op{GL}_N$, where $Z$ is a quasiprojective scheme over an affine scheme and the action of $\op{GL}_N$ is linearizable.\footnote{By which we mean there is a $\GL_N$-equivariant embedding $Z \hookrightarrow \mathbb{P}^{N-1}_A$.} Then $X$ is tannakian.
\end{lemma}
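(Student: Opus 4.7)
The reduction in \autoref{prop:covering} handles full faithfulness, so we need only establish essential surjectivity, and by descent we may assume $S$ is affine. Fix a symmetric monoidal cocontinuous functor $F : D(X) \to D(S)$ preserving connective and almost perfect complexes.

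The key step is to show that $F(V)$ is a rank-$N$ vector bundle on $S$, where $V$ denotes the tautological rank-$N$ vector bundle on $X = Z/\GL_{N}$ (pulled back from $B\GL_{N}$). First, for any line bundle $L$ on $X$, both $F(L)$ and $F(L)^{\vee} \simeq F(L^{\vee})$ are connective and invertible, forcing $F(L)$ to be a line bundle concentrated in degree $0$. Applied to $L = \det V$, this shows $\wedge^{N} F(V) \simeq F(\det V)$ is a line bundle on $S$. Next, $F(V)$ is a connective perfect complex on $S$, and for each residue field $k$ of $\pi_{0}(\calO_{S})$ we may write $F(V) \otimes_{\calO_{S}} k \simeq \bigoplus_{j \geq 0} W_{j}[j]$ for finite-dimensional $k$-vector spaces $W_{j}$. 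Expanding $\wedge^{N}$ via the super-exterior-algebra identities $\wedge^{m}(W[n]) = \wedge^{m} W [mn]$ for $n$ even and $\wedge^{m}(W[n]) = \Sym^{m} W [mn]$ for $n$ odd yields a direct sum over tuples $(k_{j})$ with $\sum_{j} k_{j} = N$ of tensor products $\bigotimes_{j} \wedge^{k_{j}}(W_{j}[j])$, with the summand in cohomological degree $-\sum_{j} j k_{j}$. That $\wedge^{N} F(V) \otimes k$ is a line in degree $0$ forces the $(N, 0, 0, \ldots)$-summand $\wedge^{N} W_{0}$ to be one-dimensional (so $\dim W_{0} = N$), and it forces the $(N-1, 1, 0, \ldots)$-summand $\wedge^{N-1} W_{0} \otimes W_{1}$ to vanish, giving $W_{1} = 0$ since $\wedge^{N-1} W_{0}$ has dimension $N > 0$; iterating with the partitions $(N-1, 0, \ldots, 0, 1, 0, \ldots)$ shows $W_{j} = 0$ for all $j \geq 1$. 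Thus $F(V) \otimes k$ has constant rank $N$ in degree $0$, so $F(V)$ is a rank-$N$ vector bundle on $S$.

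The vector bundle $F(V)$ classifies a map $\phi : S \to B\GL_{N}$, and the pullback $\tilde{X} := X \times_{B\GL_{N}} S$ is a quasi-projective scheme over $S$ (the twisted form of $Z$ by the $\GL_{N}$-torsor associated to $\phi$; it is a scheme since $\GL_{N}$-torsors are Zariski-locally trivial by Hilbert 90). The functor $F$ is canonically $D(B\GL_{N})$-linear with respect to the natural module structures coming from $X \to B\GL_{N}$ and $\phi$ (this follows from symmetric monoidality together with the identification of $\phi$ from $F(V)$). Invoking the base-change equivalence $D(\tilde{X}) \simeq D(X) \otimes_{D(B\GL_{N})} D(S)$, which applies here because $X \to B\GL_{N}$ is representable by quasi-projective schemes, one obtains a symmetric monoidal cocontinuous functor $\tilde{F} : D(\tilde{X}) \to D(S)$ preserving connective and almost perfect complexes.

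Finally, $\tilde{X}$ is a qcqs scheme over $S$, hence has quasi-affine diagonal, so $D(\tilde{X})$ is compactly generated by Thomason's theorem and $\tilde{X}$ is tannakian by \autoref{thm:td_perfect_stacks}. Thus $\tilde{F}$ corresponds to a map $\sigma : S \to \tilde{X}$, and composing with the natural projection $\tilde{X} \to X$ yields the desired map $S \to X$ inducing $F$. The main obstacle is the first step — the Koszul-sign computation showing $F(V)$ is a vector bundle of rank $N$; once this is in hand, the remaining steps are formal modulo the base-change formula for $D(-)$ along the representable quasi-projective map $X \to B\GL_{N}$.
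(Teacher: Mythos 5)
The central construction of your proof has two genuine gaps. First, the identification $\wedge^{N} F(V) \simeq F(\det V)$ is not available: exterior powers are not expressible in terms of the symmetric monoidal structure alone, since $\det V$ is a quotient (not a natural direct summand) of $V^{\otimes N}$ unless $N!$ is invertible, and a symmetric monoidal cocontinuous functor only canonically commutes with constructions built from tensor powers, colimits and duals (e.g.\ with $(V^{\otimes N})_{h\Sigma_{N}}$, which differs from the classical $\wedge^{N}$ in characteristic $p$); moreover $S$ is an arbitrary spectral affine scheme, so $\wedge^{N} F(V)$ is not even defined until one knows $F(V)$ is a classical vector bundle. The easy part of your key step is precisely the observation the paper isolates: vector bundles are exactly the dualizable objects of $D(-)^{cn}$, and $F$ preserves connectivity, so $F(V)$ is automatically a vector bundle with no Koszul-sign computation. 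What your argument does not actually establish is that its rank is exactly $N$, which is what the classifying map $\phi : S \to B\GL_{N}$ requires (a trace argument only determines the rank modulo the characteristic, so some new input is needed here).

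Second, and more seriously, the claim that $F$ is $D(B\GL_{N})$-linear with $D(S)$ regarded as a module via $\phi^{*}$ is essentially circular: writing $\pi : X \to B\GL_{N}$ for the structure map and taking $M = \calO_{X}$ in the required equivalences $F(\pi^{*}E \otimes M) \simeq \phi^{*}E \otimes F(M)$, this linearity already contains the statement $F \circ \pi^{*} \simeq \phi^{*}$, i.e.\ that the composite symmetric monoidal functor $D(B\GL_{N}) \to D(S)$ is geometric --- which is Tannaka duality for $B\GL_{N}$ itself, the real content of the lemma, and it does not follow from knowing $F(V) \simeq \phi^{*}V$. You cannot obtain it from \autoref{thm:td_perfect_stacks}, because $D(B\GL_{N})$ is not compactly generated in positive characteristic, nor from \autoref{thm:td_Noetherian}, since this lemma is an ingredient in its proof. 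In addition, the base-change equivalence $D(X) \otimes_{D(B\GL_{N})} D(S) \simeq D(\tilde{X})$ is invoked without justification, and the standard results of this type (e.g.\ for perfect stacks) do not apply precisely because $B\GL_{N}$ is not perfect in characteristic $p$. The paper's proof avoids both issues: writing $\P^{N-1}_{A}$ as a quasi-affine scheme modulo $\G_m$ exhibits $X \to B(\G_m \times \GL_N)$ as quasi-affine, so by \autoref{lem:general_properties} one reduces to $BG$, where one checks that flat sheaves (being filtered colimits of vector bundles on $BG$ over $\Z$, and vector bundles being the dualizable objects of the connective categories) are preserved, and then invokes Lurie's theorem for stacks with affine diagonal.
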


\begin{proof}
	Because projective space over an affine scheme can be written as a quasi-affine scheme modulo $\mathbb{G}_m$, we can rewrite $X \simeq Z' / G$ where $Z'$ is quasi-affine and $G = \mathbb{G}_m \times \GL_N$, and thus by \autoref{lem:general_properties} it suffices to prove the claim for $X = BG$. By Lurie's tannaka duality theorem \cite[3.4.2]{DAGVIII} once we assume $F : D(X) \to D(S)$ preserves connective objects, it suffices to show that $F$ preserves flat objects. Any $M \in \QCoh(BG)$ is a union of its coherent subsheaves, and if $M$ is flat then these torsion free coherent sheaves must be locally free, as the same is true for coherent sheaves over on the fppf cover $\Spec(\Z) \to B G$. $F$ preserves locally free sheaves, because they are precisely the dualizable objects of the symmetric monoidal $\infty$-categories $D(X)^{cn}$ and $D(S)^{cn}$. Thus $F(M)$ is a filtered colimit of locally free sheaves, and thus flat.
\end{proof}

To pass from quotient stacks to suitable strata on the target stack $X$ in \autoref{thm:td_Noetherian}, we need:

\begin{lemma} \label{lem:finite_etale}
If $f : Y \to X$ is a finite fppf morphism and $Y$ is tannakian, then $X$ is tannakian.
\end{lemma}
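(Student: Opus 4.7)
The plan is to apply \autoref{prop:covering}: given a cocontinuous symmetric monoidal functor $F : D(X) \to D(S)$ preserving connective and almost perfect complexes, I will exhibit an fpqc quasi-affine map $U \to X$ with $U$ affine, an fpqc quasi-affine map $V \to S$, and an $\calO_S$-algebra map $\calO_V \to F(\calO_U)$. Since $Y$ is quasi-geometric, fix an affine fpqc quasi-affine cover $\widetilde{Y} \to Y$; composing with $f$ produces an fpqc quasi-affine map $\widetilde{Y} \to X$ with $\widetilde{Y}$ affine, and I take $U := \widetilde{Y}$.

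The heart of the argument is producing $V$. Form $\calA := F(f_\ast \calO_Y) \in \CAlg(D(S))$. Because $f$ is finite fppf, $f_\ast \calO_Y$ is a connective dualizable $\calO_X$-algebra (i.e., finite locally free), so $\calA$ is a connective dualizable $\calO_S$-algebra, corresponding to a finite locally free morphism $g : S' \to S$. To show $g$ is faithfully flat (and hence fppf), I decompose $X = \bigsqcup_n X_n$ according to the locally constant rank function of $f_\ast \calO_Y$, with associated idempotents $e_n \in H^0(X,\calO_X)$; since $f$ is surjective, every $n$ appearing satisfies $n \geq 1$. The induced unital ring map on global sections sends each $e_n$ to an idempotent $\bar{e}_n \in H^0(S,\calO_S)$ with $\sum_n \bar{e}_n = 1$, and the rank of $\calA$ equals $\sum_n n \bar{e}_n$; at each point of $S$ exactly one $\bar{e}_n$ equals $1$, forcing the rank of $\calA$ to be $\geq 1$ everywhere and giving surjectivity of $g$.

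Next, the equivalence $D(Y) \simeq D(X, f_\ast \calO_Y)$ yields a factorization of $F$ through a cocontinuous symmetric monoidal functor $F' : D(Y) \to D(S,\calA) \simeq D(S')$, and preservation of connective and almost perfect complexes by $F'$ follows from the finite locally free nature of $f$ and $g$, which allow these properties to be detected after pushforward. Since $Y$ is tannakian, $F' \simeq h^\ast$ for a unique map $h : S' \to Y$. Setting $V := S' \times_Y \widetilde{Y}$ and using base change for pushforward along quasi-affine morphisms, I obtain $F'(\calO_{\widetilde{Y}}) \simeq h^\ast \calO_{\widetilde{Y}} \simeq \calO_V$ in $\CAlg(D(S'))$, and hence $F(\calO_U) \simeq \calO_V$ in $\CAlg(D(S))$. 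The composition $V \to S' \to S$ is fpqc quasi-affine (a base change of $\widetilde{Y} \to Y$ followed by the fppf map $g$), so \autoref{prop:covering} supplies the desired map $S \to X$ realizing $F$.

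The main obstacle is the surjectivity of $g : S' \to S$, which requires the idempotent-and-rank argument sketched above; once that is in hand, the rest follows formally from the tannakian property of $Y$ combined with \autoref{prop:covering}. Notably, this strategy sidesteps the need to know whether iterated fiber powers like $Y \times_X Y$ are tannakian, since the relevant Cech-nerve descent is handled internally by \autoref{prop:covering}.
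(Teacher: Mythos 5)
The crux of this lemma is proving that $F(f_\ast\calO_Y)$ is again finite locally free of \emph{positive rank}, and that is exactly the step your argument elides. First, the inference ``connective dualizable $\Rightarrow$ finite locally free'' is false if ``dualizable'' is taken in $D(S)$: that only gives a connective perfect complex, and connective perfect complexes need not be flat (e.g.\ $R/x$ for a nonzerodivisor $x$ is connective and perfect but not locally free). Preservation of flatness is precisely what cannot be assumed in these Tannaka theorems. The step can be repaired by taking dualizability inside the connective subcategory: since $F$ preserves connective complexes it restricts to a symmetric monoidal functor $D(X)^{cn}\to D(S)^{cn}$, and the dualizable objects of $D^{cn}$ are exactly the finite locally free sheaves (this is the fact the paper invokes in the proof of \autoref{lem:global_quotient}); with that precision, local freeness of $F(f_\ast\calO_Y)$ does follow.

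The second gap is not repaired by rephrasing: your surjectivity argument asserts that ``the rank of $\calA$ equals $\sum_n n\bar{e}_n$'', i.e.\ that $F$ preserves the rank of a locally free sheaf, and nothing you have said justifies this. The only invariant formally preserved by a symmetric monoidal functor is the trace of the identity, which determines the rank only modulo residue characteristics; a priori a rank-$p$ bundle could go to the zero bundle over a characteristic-$p$ point, in which case $g:S'\to S$ is not surjective, $V\to S$ is not fpqc, and the appeal to \autoref{prop:covering} collapses. This is exactly the point the paper's proof is built to handle: on $X_n$ the bundle underlying $f_\ast\calO_Y$ is classified by a map $X_n\to B\GL_n$, the composite $D(B\GL_n)\to D(S_n)$ is geometric by \autoref{lem:global_quotient}, and the resulting map $S_n\to B\GL_n$ classifies the module underlying $F(\calA)|_{S_n}$, which simultaneously proves local freeness and pins the rank to $n\geq 1$. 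You need this argument or a substitute (for instance, descendability of the finite faithfully flat extension $\calO_X\to f_\ast\calO_Y$, which rules out $F(\calA)$ vanishing on a nonempty clopen piece of $S$). Apart from this, your route --- idempotent splitting by rank, factoring through $D(Y)\simeq D(X,f_\ast\calO_Y)$, invoking that $Y$ is tannakian, and feeding $U=\widetilde{Y}$, $V=S'\times_Y\widetilde{Y}$ into \autoref{prop:covering} --- is sound and close in spirit to the paper's, with the minor caveat that choosing $U=\widetilde{Y}$ uses that $Y$ admits an affine fpqc quasi-affine cover, which holds in the application but is not part of the statement.
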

\begin{proof}
By a descent argument as in the proof of \autoref{prop:covering}, it suffices to show that for an affine scheme $S$, any potentially geometric symmetric monoidal functor $F : D(X) \to D(S)$ preserves finite fppf algebras. An algebra $\calA \in D^{\leq 0}(X)$ is finite and fppf if and only if the underlying object of $D^{\leq 0}(X)$ is a locally free sheaf. Because $F$ preserves idempotents in $H_0(\Gamma(\calO_X))$, we can split $X = \bigsqcup_n X_n$ and $S = \bigsqcup S_n$ into disjoint open substacks such that $\calA$ is locally free of rank $n$ on $X_n$ and the functor $D(X) \to D(S_n)$ factors through $D(X_n)$. It suffices to show that the induced functors $D(X_n) \to D(S_n)$ are geometric. On each $X_n$ the locally free sheaf underlying $\calA$ is classified by a map $X_n \to B\GL_n$, and thus a symmetric monoidal functor $D(B\GL_n) \to D(X_n)$ preserving connective and almost perfect complexes. By \autoref{lem:global_quotient}, the composition $D(B\GL_n) \to D(S_n)$ is induced from a map $S_n \to B\GL_n$, which by construction classifies the object of $D^{\leq 0}(S_n)$ underlying $F(\calA)|_{S_n}$.
\end{proof}

The next lemma shows that the description of quasi-affine morphisms given in \S \ref{sec:QuasiAffine} can be modified to take finite presentation constraints into account:

\begin{lemma} \label{lem:lafp_quasiaffine}
Let $X$ be a Noetherian stack, and $U \to X$ be a locally almost finitely presented quasi-affine morphism with $U$ classical. Then $\calO_U$ is a compact localization of an almost finitely presented $\calA \in \CAlg(\QCoh(X))$.
\end{lemma}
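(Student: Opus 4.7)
The strategy is to produce an affine morphism $g : V \to X$ that is almost finitely presented together with a quasi-compact open immersion $U \hookrightarrow V$ over $X$. Granted such a $V$, the lemma follows by combining \autoref{thm:QAchar}(3) (applied to the open immersion $U \hookrightarrow V$ viewed inside the fpqc-algebraic stack $V$) with the symmetric monoidal equivalence $D(V) \simeq D(X, \calA)$ afforded by pushforward along the affine map $V \to X$, where $\calA := \pi_0 g_* \calO_V \in \CAlg(\QCoh(X))$ is classical and almost finitely presented as an $\calO_X$-algebra since $V \to X$ is affine and a.f.p. The compact-localization $\calO_V \to \calO_U$ in $\CAlg(D(V))$ then transports to the required compact-localization $\calA \to \calO_U$ in $\CAlg(D(X))$.

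To produce $V$, I would begin with the canonical affine envelope $V_0 := \underline{\Spec}_X(\calA_0)$, where $\calA_0 := \pi_0 f_* \calO_U \in \CAlg(\QCoh(X))$ is classical (as $U$ and the classical truncation of $X$ are), and $U \hookrightarrow V_0$ is a quasi-compact open immersion. Since $\calA_0$ may fail to be a.f.p., I would invoke Noetherian approximation: as $X$ is Noetherian, $\calA_0$ is a filtered colimit $\calA_0 = \colim_i \calA_i$ of almost finitely presented sub-$\calO_X$-algebras, yielding a cofiltered presentation $V_0 = \lim_i V_i$ by a.f.p. affine $X$-schemes $V_i := \underline{\Spec}_X(\calA_i)$ with affine transition maps. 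Because $U \subset V_0$ is quasi-compact, its defining equations already lie in some $\calA_{i_0}$, producing a quasi-compact open $U_{i_0} \subset V_{i_0}$ with $U = V_0 \times_{V_{i_0}} U_{i_0}$.

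The essential step, where the local almost finite presentation of $U \to X$ genuinely enters, is to show that the base-change map $U \to U_{i_1}$ is an isomorphism for some $i_1 \geq i_0$; taking $V := V_{i_1}$ then concludes the construction. Zariski-locally on $X$, the hypothesis guarantees that $U$ already embeds as a quasi-compact open in an a.f.p. affine $X$-scheme, and a standard Noetherian descent argument, comparing this local model with the filtered approximation $\{V_i\}$ of the canonical affine envelope, forces the cofiltered system $\{U_i\}$ to stabilize at some finite stage, locally and hence globally on $X$. Reconciling ad hoc local a.f.p. models of $U$ with the canonical (and possibly non-a.f.p.) global filtered approximation of $\calA_0$ is the most delicate part of the argument, and is the main expected obstacle.
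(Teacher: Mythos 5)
Your setup coincides with the paper's: since $U$ is classical, $\calB := \pi_0 f_*\calO_U = \tau^{\leq 0}\calO_U$ is discrete, $\calO_U$ is a compact localization of $\calB$ by \autoref{thm:QAchar}, and one writes $\calB$ as a filtered union of finite-type (hence, over Noetherian $X$, almost finitely presented) subalgebras $\calA_\alpha$ and spreads the quasi-compact open $U \subset \underline{\Spec}_X(\calB)$ out to a finite stage. However, the decisive step---showing that $U \to U_\alpha \subset \underline{\Spec}_X(\calA_\alpha)$ is an isomorphism for $\alpha \gg 0$, which is exactly where the local almost finite presentation of $U \to X$ enters---is not actually carried out: you flag it as ``the most delicate part'' and ``the main expected obstacle,'' and the route you sketch for it is circular as written. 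The assertion that ``Zariski-locally on $X$, the hypothesis guarantees that $U$ already embeds as a quasi-compact open in an a.f.p.\ affine $X$-scheme'' is precisely the local form of the lemma being proved (the hypothesis only says that the \emph{affine opens of $U$} are almost finitely presented over $X$), so invoking it, and then hoping to ``reconcile'' such ad hoc local models with the global system $\{\calA_\alpha\}$, leaves a genuine gap.

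The missing argument is short and removes the globalization worry you raise, because the $\calA_\alpha$ are already global: being a compact localization is local on $X$, and $X$ is quasi-compact while the system is filtered, so it suffices to fix a finite affine cover and find one $\alpha$ working on each chart. Over an affine chart $\Spec(R)$, choose $f_1,\dots,f_n \in \calB$ with $\Spec(\calB[f_j^{-1}]) \subset U$ covering $U$, and refine the system so that all $f_j \in \calA_\alpha$. Each $U_{f_j} = \Spec(\calB[f_j^{-1}])$ is an affine open of $U$, so by the hypothesis $\calB[f_j^{-1}]$ is a finitely generated $R$-algebra; since $\calB[f_j^{-1}] = \bigcup_\alpha \calA_\alpha[f_j^{-1}]$ is a filtered union of subalgebras, finitely many generators land in some $\calA_\alpha[f_j^{-1}]$, whence $\calA_\alpha[f_j^{-1}] = \calB[f_j^{-1}]$ for all $j$ once $\alpha$ is large. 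Then $U \to \Spec(\calA_\alpha)$ is a quasi-compact open immersion, so $\calA_\alpha \to \calO_U$ is a compact localization, which is the statement of the lemma; your final transport step (identifying compactness over $\calA_\alpha$ in $\CAlg(D(X))_{\calA_\alpha/}$ with compactness in $\CAlg(D(\underline{\Spec}_X \calA_\alpha))$) is fine.
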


\begin{remark}
\autoref{thm:QAchar} implies that if $\calA' \to \calA$ is a map of connective algebras and $\calA \to \calO_U$ and the composition $\calA' \to \calO_U$ are both bounded above compact localizations, then the canonical map $\calA \otimes_{\calA'} \calO_U \to \calO_U$ is an equivalence of $\calA$-algebras. In particular $(\tau^{\leq 0} \calO_U) \otimes_{\calA} \calO_U \to \calO_U$ is an equivalence of $\tau^{\leq 0}\calO_U$-algebras in the previous lemma.
\end{remark}

\begin{proof}
We may assume to start that $\calO_U$ is a compact localization of some discrete algebra $\calB \to \calO_U$, which can be taken to be the algebra $\tau^{\leq 0} \calO_U$. Write $\calB \in \CAlg(\QCoh(X))$ as a filtered union of discrete almost finitely presented algebras $\calB \simeq \bigcup_\alpha \calA_\alpha$. We claim that $\calA_\alpha \to \calO_U$ is a compact localization for some $\alpha$. This claim is local on $X$, so we may assume that $X$ is affine. Choose $f_1,..,f_n \in \calB$ such that $\Spec(\calB_{f_i}) \subset U$ and $\cup_i \Spec(\calB_{f_i}) = U$. By refining our system, we may assume that the $f_i$'s comes from each $\calA_{\alpha}$. Thus, for each $i$, we obtain cartesian squares
\begin{equation} \label{eqn:filtered_union}
\xymatrix{U \ar[r] & \Spec(\calB) \ar[r] & \Spec( \calA_\alpha) \\ U_{f_i} \ar[r]^-{\simeq} \ar[u] & \Spec(\calB[f_i^{-1}]) \ar[r] \ar[u] & \Spec( \calA_{\alpha}[f_i^{-1}]) \ar[u]}
\end{equation}
where the vertical arrows are open immersions. Note that $\calB[f_i^{-1}] = \cup_\alpha \calA_\alpha[f_i^{-1}]$ is an increasing union of the displayed finitely presented algebras. As $U_{f_i}$ is finitely presented, it follows that $\calB[f_i^{-1}] = \calA_\alpha[f_i^{-1}]$ for $\alpha \gg 0$. As there are only finitely many values of $i$ under consideration, we obtain that $\Spec(\calB) \to \Spec(\calA_\alpha)$ is an isomorphism after inverting any $f_i$ for $\alpha \gg 0$. For any such $\alpha$, it follows that $U \to \Spec(\calA_\alpha)$ is a quasi-compact open immersion, and thus $\calA_\alpha \to \calO_U$ is a compact localization.
\end{proof}

The next two lemmas will be useful in showing preservation of finitely presented quasi-affine algebras under the functors appearing in \autoref{thm:td_Noetherian}:

\begin{lemma}
	\label{lem:KernelLocalization}
	Let $R$ be a connective $E_\infty$-ring. Let $A$ be a eventually connective localization of $R$, and let $I \subset \pi_0(R)$ be a finitely generated ideal. If $\pi_0(R)/I \otimes A \simeq 0$, then $M \otimes A \simeq 0$ for any $M \in D_I(R)$.
	\end{lemma}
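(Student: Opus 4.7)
The plan is to reduce the claim to vanishing of a single generator of $D_I(R)$, and then verify that vanishing via a Zariski-covering argument on $\Spec(\pi_0 A)$.

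First, I would choose a finite generating set $f_1,\dots,f_r$ of $I$, and let $K := K_R(f_1,\dots,f_r) := \cofib(f_1) \otimes_R \cdots \otimes_R \cofib(f_r)$ be the Koszul complex. By standard generation results (cf.\ \cite[\S 7.1.1]{LurieHA}), the subcategory $D_I(R) \subset D(R)$ is the smallest localizing subcategory containing $K$. Since $-\otimes_R A : D(R)\to D(A)$ preserves colimits, it is enough to show that $K \otimes_R A \simeq 0$.

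Next, compatibility of the Koszul construction with base change gives $K \otimes_R A \simeq K_A(\bar f_1,\dots,\bar f_r)$, where $\bar f_i \in \pi_0(A)$ denotes the image of $f_i$. The hypothesis $\pi_0(R)/I \otimes_R A \simeq 0$, applied to $\pi_0$, gives $\pi_0(A)/(\bar f_1,\dots,\bar f_r)\,\pi_0(A) = 0$, so the elements $\bar f_i$ generate the unit ideal of $\pi_0(A)$.

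Finally, after inverting any single $\bar f_i$ on $A$, the factor $\cofib(\bar f_i\colon A[\bar f_i^{-1}] \to A[\bar f_i^{-1}])$ vanishes (as $\bar f_i$ becomes invertible), and hence $K_A(\bar f_1,\dots,\bar f_r)[\bar f_i^{-1}] \simeq 0$ for every $i$. Since the $\bar f_i$ generate the unit ideal in $\pi_0(A)$, the basic opens $\Spec(\pi_0(A)[\bar f_i^{-1}])$ cover $\Spec(\pi_0(A))$; concretely, for any $\pi_0(A)$-module $N$ with $N[\bar f_i^{-1}] = 0$ for all $i$, each element is annihilated by some power $\bar f_i^{k_i}$, and the powers $\bar f_i^{k_i}$ still generate the unit ideal, so any element of $N$ is zero. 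Applying this level-by-level to the homotopy groups $\pi_j(K_A)$ concludes that $K_A \simeq 0$.

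The only genuinely delicate step is the first reduction: correctly identifying $D_I(R)$ with the localizing subcategory generated by the Koszul complex. Once the definition is pinned down (as is standard in this $E_\infty$ setting), the remainder is a routine Zariski-covering argument in $\pi_0$. All the steps beyond the first are elementary once the unit-ideal consequence of the hypothesis is extracted.
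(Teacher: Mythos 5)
Your opening reduction to the Koszul complex $K = K_R(f_1,\dots,f_r)$ is legitimate (and is a genuinely different route from the paper, which never introduces $K$ and instead runs a d\'evissage through $D(\pi_0(R)/I^k)$, discrete torsion modules, and Postnikov towers), but the step where you extract the unit-ideal statement is false, and it is exactly where the content of the lemma lives. From $\pi_0(R)/I \otimes_R A \simeq 0$ you cannot conclude $\pi_0(A) = I\,\pi_0(A)$: that inference computes $\pi_0$ of a tensor product as the tensor product of $\pi_0$'s, which is valid only when both factors are connective, whereas $A$ is merely eventually connective, so the positive cohomological degrees of $A$ contribute Tor terms to degree $0$ of $\pi_0(R)/I \otimes_R A$ and can cancel the naive term. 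A concrete counterexample --- in fact the prototypical case the lemma is designed for --- is $R = k[x,y]$, $I = (x,y)$, and $A = \R\Gamma(\A^2\setminus\{0\},\calO)$: this is an eventually connective localization of $R$, the hypothesis holds ($k \otimes_R A \simeq 0$, as one sees from the \v{C}ech complex for the cover by $\{x\neq 0\}$ and $\{y \neq 0\}$, whose terms are flat), yet $\pi_0(A) = k[x,y]$, so the images of $x,y$ do not generate the unit ideal. With that, your final Zariski-covering step collapses as well: knowing $\pi_j(K_A)[\bar f_i^{-1}] = 0$ for all $i$ only says these $\pi_0(A)$-modules are $I$-power torsion, which forces nothing unless the $\bar f_i$ generate the unit ideal. (In the example $K \otimes_R A \simeq 0$ is still true, but for reasons your argument does not capture.)

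To repair the argument you would still need to deduce $K \otimes_R A \simeq 0$ from the hypothesis. Since $K$ is bounded with finitely generated homotopy groups killed by powers of $I$, this amounts to showing that $-\otimes_R A$ kills every discrete $I$-power-torsion module and then using finitely many extensions --- which is precisely the d\'evissage the paper carries out (free resolutions over $\pi_0(R)/I$, then $\pi_0(R)/I^k$, then filtered unions). If you supply that step, your Koszul reduction does yield a correct and arguably cleaner proof: the generation of $D_I(R)$ by $K$ as a localizing subcategory replaces the paper's Postnikov-tower argument (which is where the paper uses eventual connectivity of $A$). As written, however, the essential step of your proposal is wrong, and the strategy of checking vanishing Zariski-locally on $\Spec(\pi_0(A))$ cannot be salvaged, since the relevant open sets need not cover $\Spec(\pi_0(A))$ in the cases of interest.
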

\begin{proof}
	Let $\calC \subset D(R)$ be the collection of $M$ such that $M \otimes A \simeq 0$. Then $\calC$ is a full stable cocomplete subcategory of $D(R)$. Since $\pi_0(R)/I \in \calC$, by using free resolutions, the essential image of the pushforward functor $D(\pi_0(R)/I) \to D(R)$ belongs to $\calC$. Using triangles, the same is true for the essential image of $D(\pi_0(R)/I^k) \to D(R)$ for all $k \geq 0$. Now, if $M \in D_I(R)$ is discrete, then $M = \cup_k M_k$, where $M_k \subset M$ is the collection of elements annihilated by $I^k$, so it follows that $M \in \calC$. By stability, any bounded object in $D_I(R)$ lies in $\calC$. Finally, since $A$ is eventually connective, it is easy to see that $\calC$ is closed under taking limits of Postnikov towers, i.e., if $\{M_i\}$ is a tower of objects in $\calC$ such that the connectivity of the fibre of  $M_{n+1} \to M_n$ increases with $n$, then $\lim M_i \in \calC$. Putting everything together, we get $D_I(R) \subset \calC$.
\end{proof}

\begin{lemma} \label{lem:preserve_localizations}
Let $X$ be a Noetherian stack, let $R$ be a connective $E_\infty$-algebra, and let $F : D(X) \to D(R)$ be a symmetric monoidal functor preserving connective and almost perfect objects. Then $F$ preserves bounded above compact localizations of connective almost finitely presented algebras.
\end{lemma}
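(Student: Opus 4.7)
The plan is to verify the three ingredients of ``bounded above compact localization'' separately: the map $F(\calA') \to F(\calA)$ is a localization, $F(\calA)$ is bounded above, and $F(\calA)$ is compact in $\CAlg(D(R))_{F(\calA')/}$. The first is immediate from $F$ being cocontinuous and symmetric monoidal, since $F(\calA) \otimes_{F(\calA')} F(\calA) \simeq F(\calA \otimes_{\calA'} \calA) \simeq F(\calA)$. The second follows from $F$ being right $t$-exact: if $\calA \in D^{\leq n}(X)$, then $\calA[-n]$ is connective, hence so is $F(\calA)[-n] = F(\calA[-n])$, giving $F(\calA) \in D^{\leq n}(R)$. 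The real content lies in compactness.

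For compactness I would produce an explicit finite-colimit presentation of $\calA$ over $\calA'$ that $F$ manifestly preserves, by adapting the Koszul-style construction of \autoref{ex:PresentationQCOpen} to the derived relative setting. Applying \autoref{thm:QAchar} to the fpqc-algebraic stack $V := \Spec_X(\calA')$, we may write $\calA \simeq \calO_U$ for a quasi-compact open immersion $U \hookrightarrow V$; since $\calA'$ is almost finitely presented and $X$ is Noetherian, the closed complement $Z = V \setminus U$ is cut out by a coherent ideal sheaf of $\pi_0(\calA')$. Reducing to the case where $X = \Spec(A)$ is affine (so the ideal is generated by finitely many global elements $\bar{f}_1, \ldots, \bar{f}_r \in \pi_0(\calA')$), I would define $K := \calA' \otimes_{\calA'[x_1, \ldots, x_r]} \calA'$ as a pushout in $\CAlg_{\calA'}$ along the algebra maps $x_i \mapsto \bar{f}_i$ and $x_i \mapsto 0$. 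Then $K$ is a compact $\calA'$-algebra whose underlying module is the Koszul complex on the $\bar{f}_i$, hence a perfect $\calA'$-module; the unit $\calA' \to K$ dualizes to $\eta : K^\vee \to \calA'$. Setting $\calA'' := \calA' \otimes_{\Sym_{\calA'}(K^\vee)} \calA'$ via the maps induced by $\eta$ and $0$ gives a pushout of compact $\calA'$-algebras, and the argument of \autoref{ex:PresentationQCOpen} generalizes essentially unchanged---its only delicate input being an algebra structure on $K$, which is automatic from the pushout definition---to identify $\calA'' \simeq \calO_U \simeq \calA$.

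Applying the symmetric monoidal cocontinuous functor $F$ then yields
\[ F(\calA) \simeq F(\calA') \otimes_{\Sym_{F(\calA')}(F(K^\vee))} F(\calA') \]
in $\CAlg_{F(\calA')}$. Since the induced functor on module categories preserves dualizable objects, $F(K^\vee)$ is a perfect $F(\calA')$-module, so $\Sym_{F(\calA')}(F(K^\vee))$ is compact in $\CAlg_{F(\calA')}$, exhibiting $F(\calA)$ as a pushout of compact $F(\calA')$-algebras, hence compact.

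The principal obstacle will be the reduction to the case where the ideal cutting out $Z \subset V$ is globally generated by finitely many sections. I would handle this either by working \v{C}ech-locally with respect to an affine atlas of $X$ and assembling a global compact presentation from local Koszul-type pieces, or by combining fpqc descent for quasi-affine algebras (\autoref{lem:fpqcDescentQuasiAffine}) with a filtered-colimit argument in the spirit of \autoref{lem:KernelLocalization}, so that compactness of $F(\calA)$ over $F(\calA')$ can be verified directly via the kernel perfect complex even before globalizing the Koszul presentation. A minor subtlety is that $K$ need not carry a classical exterior-algebra structure when $\calA'$ is non-discrete, but this is sidestepped by defining $K$ as a pushout in $\CAlg$, which is automatically $E_\infty$.
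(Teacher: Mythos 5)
Your first two steps (that $F(\calA') \to F(\calA)$ is again a localization, and that $F(\calA)$ is again bounded above) are fine, but the compactness step --- which you correctly identify as the real content --- has two genuine gaps. First, the Koszul-type object $K := \calA' \otimes_{\calA'[x_1,\ldots,x_r]} \calA'$ cannot simultaneously be a compact $E_\infty$-$\calA'$-algebra and have the Koszul complex as its underlying module once $\calA'$ is a general connective $E_\infty$-algebra. If $\calA'[x_1,\ldots,x_r]$ means the free $E_\infty$-algebra $\Sym_{\calA'}(\calA'^{\oplus r})$, the pushout is indeed a compact algebra, but its underlying module is a bar construction built out of the pieces $\calA' \otimes \Sigma^\infty_+ B\Sigma_n$, and outside characteristic zero it is not perfect (already for $r=1$ and $f=p$ the homotopy involves the homology of symmetric groups); so $K^\vee$ and the compactness of $\Sym_{\calA'}(K^\vee)$ are unavailable. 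If instead $K$ is the honest Koszul complex, it is perfect but in general admits no $E_\infty$-structure (the mod $2$ Moore spectrum admits no unital multiplication at all), and the multiplicative structure is exactly what \autoref{ex:PresentationQCOpen} uses to conclude $K \otimes \calB \simeq 0$ --- which is why that example explicitly assumes the base is classical and invokes simplicial commutative rings. So the ``minor subtlety'' you propose to sidestep by defining $K$ as a pushout in $\CAlg$ is not sidestepped; it is the obstruction, and the lemma must cover non-discrete connective $\calA'$.

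Second, the reduction to $X$ affine, which you flag as the principal obstacle, is not available: $F$ is only given as a functor on $D(X)$, so ``working \v{C}ech-locally on an affine atlas'' would require knowing that $F(\calO_W)$ is a geometric (faithfully flat) algebra for a chart $W \to X$, which is essentially the content of \autoref{thm:td_Noetherian} that this lemma is feeding into; and on a general Noetherian stack the ideal of $Z$ admits no global finite generating set, so there is no global Koszul presentation to which one could apply $F$. The paper's proof avoids producing any presentation: writing the localization as $\cofib(\Gamma_Z \calA \to \calA)$ for a classically finitely presented closed $Z \subset \underline{\Spec}_X(\calA)$, it proves $F(\Gamma_Z\calA) \simeq \Gamma_{F(Z)}F(\calA)$ globally, using that $i_\ast D^b(Z)$ generates $D^b_Z$ (Noetherianness of $\underline{\Spec}_X(\calA)$) together with \autoref{lem:KernelLocalization} applied on the target; hence $F(\calO_U)$ is the structure sheaf of the quasi-compact open complement of $F(Z)$ in $\Spec(F(\calA))$ and is therefore a bounded above compact localization by \autoref{thm:QAchar}. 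Your fallback remark about verifying compactness ``directly via the kernel perfect complex'' gestures in this direction, but the proposal as written does not contain that argument.
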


\begin{proof}

Let $\calA \in \CAlg(D^{\leq 0}(X))$ be an almost finitely presented algebra, and let $Z \subset X' := \underline{\Spec}_X(\calA)$ be a (classically) finitely presented closed immersion. Then $F(Z) := \Spec(F(\calO_Z)) \hookrightarrow F(X') := \Spec(F(\calA))$ is a classically finitely presented closed immersion as well. In order to establish the lemma in this case, it suffices to show that $F(\Gamma_Z \calA) \to F(\calA)$ is isomorphic to $\Gamma_{F(Z)} F(\calA) \to F(\calA)$. Consider the induced pullback functor $F : D(Z) \simeq D(X',\calO_Z) \to D(F(X'), F(\calO_Z)) \simeq D(F(Z))$. By construction, this functor commutes with the forgetful functors $i_\ast : D(Z) \to D(X')$ and $i_\ast : D(F(Z)) \to D(F(X'))$. Because $X'$ is Noetherian, the essential image $i_\ast D^b(Z)$ generates $D_Z^b(X')$, and thus $F(\Gamma_Z \calA) \in D_{F(Z)}(F(X'))$. Let $\calC \subset D(F(X'))$ be the full subcategory of objects $M$ such that the canonical map
\[F(\Gamma_Z \calA) \otimes M \to M\]
is an equivalence. Then $\calC \subset D_{F(Z)}(F(X'))$, and contains the image of $F : D(Z) \to D(F(Z))$. \autoref{lem:KernelLocalization} then shows that $D_{F(Z)} (F(X')) \subset \calC$, so the two are equal. The universal property of $\Gamma_{F(Z)} F(\calA) \to F(\calA)$ then identifies this map with $F(\Gamma_Z \calA) \to F(\calA)$.
\end{proof}

\begin{remark}
In fact, using Noetherian approximation on the postnikov tower of an arbitrary connective algebra, one can show that $F$ preserves bounded above compact localizations of any $\calA \in \CAlg(D^{\leq 0}(X))$.

\end{remark}

Combining \autoref{lem:lafp_quasiaffine} and \autoref{lem:preserve_localizations} yields the following:

\begin{corollary} \label{cor:preserve_quasi_affine}
If $F : D(X) \to D(S)$ is a cocontinuous symmetric monoidal functor preserving $\APerf^{cn}$, and $U \to X$ is a locally almost finitely presented quasi-affine map for which $U$ is classical, then $F(\calO_U)$ is a quasi-affine algebra corresponding to a locally almost finitely presented map $F(U) \to S$
\end{corollary}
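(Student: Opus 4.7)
The plan is to combine \autoref{lem:lafp_quasiaffine} and \autoref{lem:preserve_localizations}, and then interpret the result geometrically via \autoref{thm:QAchar}. No new ideas should be needed; this is a routine synthesis.

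First, I would apply \autoref{lem:lafp_quasiaffine} to the locally almost finitely presented quasi-affine morphism $U \to X$. This produces a connective almost finitely presented algebra $\calA \in \CAlg(\QCoh(X))$ together with a bounded-above compact localization $\calA \to \calO_U$, exhibiting $U$ as a quasi-compact open subscheme of the locally almost finitely presented affine $X$-scheme $\underline{\Spec}_X(\calA)$. Boundedness above is automatic since $U$ is classical, so that $\tau^{\leq 0}\calO_U$ controls $\calO_U$ via the remark following that lemma.

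Next I would push through $F$. Since $F$ preserves $\APerf^{cn}$ by hypothesis and $\calA$ is connective almost finitely presented, $F(\calA)$ is a connective almost finitely presented $\calO_S$-algebra. In particular, the corresponding affine morphism $\underline{\Spec}_S(F(\calA)) \to S$ is locally almost finitely presented. By \autoref{lem:preserve_localizations}, the map $F(\calA) \to F(\calO_U)$ is again a bounded-above compact localization.

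Finally, I would invoke \autoref{thm:QAchar}(2) (with the base $\calA' = F(\calA)$) to identify $F(\calO_U)$ with $\calO_V$ for a uniquely determined quasi-affine morphism $V = F(U) \to S$, realized as a quasi-compact open immersion $V \hookrightarrow \underline{\Spec}_S(F(\calA))$. Composing with the locally almost finitely presented morphism $\underline{\Spec}_S(F(\calA)) \to S$ then shows $F(U) \to S$ is locally almost finitely presented, completing the argument. The only subtle point to verify is the bounded-above hypothesis needed to apply \autoref{lem:preserve_localizations}, but this is guaranteed by the classicality of $U$; beyond this there is no serious obstacle.
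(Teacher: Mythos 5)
Your route is the paper's: the corollary is obtained there precisely by combining \autoref{lem:lafp_quasiaffine} and \autoref{lem:preserve_localizations}, with \autoref{thm:QAchar} supplying the geometric interpretation, and your steps (applying \autoref{lem:lafp_quasiaffine}, then \autoref{lem:preserve_localizations}, then realizing $F(U)$ as a quasi-compact open in $\underline{\Spec}_S(F(\calA))$ and composing) are exactly the intended synthesis.

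There is, however, a genuine gap in the step asserting that $F(\calA)$ is a connective almost finitely presented $\calO_S$-algebra ``since $F$ preserves $\APerf^{cn}$ and $\calA$ is connective almost finitely presented.'' The algebra $\calA$ produced by \autoref{lem:lafp_quasiaffine} is almost finitely presented \emph{as an algebra} (a discrete finite-type $\calO_X$-algebra), not almost perfect \emph{as a module}: already for $U = \mathbb{A}^1_X \to X$ one has $\calA = \calO_X[t]$, which does not lie in $\APerf(X)$, so the hypothesis that $F$ preserves $\APerf^{cn}$ says nothing directly about $F(\calA)$; and in any case lying in $\APerf^{cn}$ is not the same condition as almost finite presentation of an algebra. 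Since the ``locally almost finitely presented'' half of the conclusion --- the part later fed into the smoothness criterion in the proof of \autoref{thm:td_Noetherian} --- rests entirely on this point, it needs an actual argument: for instance, locally $\calA$ admits, to every order, approximations by finitely presented algebras built from $\Sym$ of finite locally free modules by finitely many pushouts; a cocontinuous symmetric monoidal functor preserves such presentations (it sends vector bundles to vector bundles and commutes with $\Sym$ and pushouts), and preservation of connectivity controls the approximations, whence $F(\calA)$ is almost of finite presentation over $\calO_S$. With that repair (or an equation-transporting argument as in the proof of \autoref{lem:preserve_localizations}), the rest of your synthesis goes through as in the paper.
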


\begin{remark}
It is possible to show that \autoref{lem:lafp_quasiaffine} holds for arbitrary $n$-truncated $U$, and \autoref{cor:preserve_quasi_affine} holds for arbitrary locally almost finitely presented quasi-affine maps $U \to X$. We have omitted these arguments in the interest of space, because we will only need to consider classical $U$ in the proof of \autoref{thm:td_Noetherian}.
\end{remark}

Next, we record a criterion for smoothness; this will be useful in showing that the functors appearing in \autoref{thm:td_Noetherian} preserve smoothness.

\begin{lemma}
\label{lem:SmoothMapCharacterization}
Let $f:X \to Y$ be an almost finitely presented map of classical schemes. Assume:
\begin{enumerate}
	\item $\Omega^1_{X/Y}$ is locally free, and that each fibre of $f$ is smooth.
	\item For each $x \in X$, one has $\Tor^{\calO_{Y,f(x)}}_1(\calO_{X,x},\kappa(f(x))) = 0$.
\end{enumerate}
Then $f$ is smooth.
\end{lemma}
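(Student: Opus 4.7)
The strategy is to reduce smoothness to flatness and then invoke the classical local criterion of flatness, with hypothesis (2) playing the key role. Once flatness is in hand, smoothness follows from the familiar characterization: a locally finitely presented morphism of classical schemes is smooth iff it is flat with smooth fibres (hypothesis (1) providing the latter).

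Since smoothness is local on source and target, I would first reduce to the affine situation $f\colon \Spec(B) \to \Spec(A)$, where $B$ is almost of finite presentation over $A$. For classical rings this gives that $B$ is of finite presentation over $A$ in the classical sense, so $f$ is locally of finite presentation. Flatness can be verified at stalks: fix a point $x \in X$ with image $y = f(x)$; it suffices to show that $\calO_{X,x}$ is flat over $\calO_{Y,y}$.

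Hypothesis (2) says exactly $\Tor_1^{\calO_{Y,y}}(\calO_{X,x}, \kappa(y)) = 0$, which is the hypothesis of the local criterion of flatness in the Noetherian setting. To reduce to that setting, I would use Noetherian approximation: write $A$ as a filtered colimit $\colim A_\alpha$ of its finitely generated $\Z$-subalgebras. For sufficiently large $\alpha$, the finitely presented $A$-algebra $B$ descends to a finitely presented $A_\alpha$-algebra $B_\alpha$; the fibre-smoothness transports to $f_\alpha\colon \Spec(B_\alpha) \to \Spec(A_\alpha)$ at the image of $y$, and the vanishing of $\Tor_1$ likewise descends after possibly enlarging $\alpha$ (using the compatibility of $\Tor$ with filtered colimits of rings, plus the fact that $\calO_{X,x}$ and $\kappa(y)$ are base-changed from Noetherian local rings at finite stage). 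Applying the classical Noetherian local criterion then gives flatness of $(B_\alpha)_{x_\alpha}$ over $(A_\alpha)_{y_\alpha}$, and flatness is preserved under filtered colimits, yielding flatness of $\calO_{X,x}$ over $\calO_{Y,y}$.

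With $f$ flat, locally of finite presentation, and having smooth fibres, $f$ is smooth. The main technical obstacle is executing the Noetherian approximation carefully, ensuring both that $f$ descends to a finitely presented morphism over a Noetherian base at some finite stage and that the Tor-vanishing transports to that stage without requiring extra data; once this bookkeeping is done, the classical local criterion of flatness together with the standard characterization of smoothness (flat plus locally finitely presented plus smooth fibres) completes the proof, with $\Omega^1_{X/Y}$ being locally free automatically once $f$ is known to be smooth.
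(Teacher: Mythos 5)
Your overall plan (flatness via the local criterion, then ``flat $+$ locally of finite presentation $+$ smooth fibres $\Rightarrow$ smooth'') is reasonable in outline, and the last step is indeed valid without Noetherian hypotheses; but the central reduction has a genuine gap, and it is exactly the point the lemma is designed to address. Writing $A = \colim A_\alpha$ and descending $B$ to a finitely presented $B_\alpha$ is fine, as is descending smoothness of the fibre at the relevant point (smoothness over a field descends along the extension $\kappa(y_\alpha) \subset \kappa(y)$). What does not work is the claim that the vanishing $\Tor_1^A(B,\kappa(y)) = 0$ ``descends after possibly enlarging $\alpha$.'' Compatibility of $\Tor$ with filtered colimits only expresses $\Tor_1^A(B,\kappa(y))$ as a filtered colimit of groups computed at finite stages, and vanishing of a filtered colimit does not force vanishing of any term; moreover there is no base-change formula relating $\Tor_1^{A_\alpha}(B_\alpha,\kappa(y_\alpha))$ to $\Tor_1^A(B,\kappa(y))$, since the maps $A_\alpha \to A$ are not flat and $\kappa(y)$ is not obtained from $\kappa(y_\alpha)$ by base change. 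In fact, hypothesis (2) at a finite Noetherian stage is essentially equivalent to the conclusion: a posteriori it holds for $\alpha \gg 0$ because $f$ is smooth and smoothness spreads out through the limit, so any direct proof of the descent would be circular. Since the classical local criterion of flatness is a Noetherian statement, and the lemma is needed in the paper precisely for an arbitrary classical test ring (in the proof of \autoref{thm:td_Noetherian}), this is not ``bookkeeping'' but the heart of the matter; as written the argument does not go through. It is also a warning sign that your route never uses the local freeness of $\Omega^1_{X/Y}$ from hypothesis (1).

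The paper avoids flatness altogether and verifies the naive cotangent complex criterion for smoothness directly. Locally it chooses a factorisation $A \to P \to B$ with $P$ smooth over $A$ and $P \to B$ surjective with kernel $I$, and studies $I/I^2 \to \Omega^1_{P/A}\otimes_P B$. Local freeness of $\Omega^1_{B/A}$ splits off $K = \ker(\Omega^1_{P/A}\otimes_P B \to \Omega^1_{B/A})$ as a locally free direct summand, so $I/I^2 = K \oplus K'$; hypothesis (2) guarantees that the formation of $I$, hence of $I/I^2$, commutes with passage to the fibres, where fibrewise smoothness forces the complementary summand $K'$ to vanish. Since $K'$ is a finite module over $B$ (not over $A$), Nakayama gives $K' = 0$, so the naive cotangent complex is a finite locally free module in degree $0$ and $B/A$ is smooth by the standard (non-Noetherian) criterion. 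If you wish to rescue a flatness-first argument, you would need a genuinely non-Noetherian local criterion of flatness for finitely presented algebras, which cannot be obtained by pushing the $\Tor$-vanishing through a filtered colimit as proposed.
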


\begin{proof}
The claim is Zariski-local on $X$ and $Y$, so we may assume that $Y = \Spec(A)$ and $X = \Spec(B)$; assumption (2) then translates to $\Tor_1^A(B,k) = 0$ for any field $k$ under $A$. Choose a factorisation $A \to P \to B$ with $P$ smooth over $A$ (for example, a polynomial algebra), and $P \to B$ surjective with kernel $I$. This leads to a right exact sequence 
\[I/I^2 \to \Omega^1_{P/A} \otimes_P B \stackrel{f^*}{\to} \Omega^1_{B/A} \to 0.\]
Let $K = \ker(f^*)$. Then $K$ is locally free by the assumption on $\Omega^1_{B/A}$. It follows that we can write $I/I^2 = K \oplus K'$, where $K'$ is the kernel of the first map in the sequence. Note that the formation of $K$ commutes with passage to the fibres over $Y$. Moreover, assumption (2) shows that the formation of $I$, and hence $I/I^2$, also commutes with passage to the fibres: for each residue field $k$ of $A$, the kernel of the surjective map $I \otimes_A k \to \ker(P \otimes_A k \to B \otimes_A k)$ is $\Tor_1^A(B,k)$, which vanishes by (2). Also, on each fibre, the map $I/I^2 \to K$ is an isomorphism by smoothness. It follows that $K'/ \mathfrak{m} K' = 0$ for each maximal ideal $\mathfrak{m} \subset A$. Also, since $I$ is finite type (by assumption on $B/A$), so is $K'$ (being a retract of $I/I^2$), and thus Nakayama gives $K' = 0$. This implies that the naive cotangent complex \cite[Tag 00S0]{StacksProject} $I/I^2 \to \Omega^1_{P/A} \otimes_P B$ is a locally free module placed in degree 0, and thus $B/A$ is smooth \cite[Tag 00T2]{StacksProject}.
\end{proof}

\begin{remark}
	Assumption (2) in \autoref{lem:SmoothMapCharacterization} is necessary: the map $\Spec(k) \to \Spec(k[x]/(x^2))$ satisfies assumption (1), but is not smooth.
\end{remark}

To apply \autoref{lem:SmoothMapCharacterization}, we need the following preservation property for (classical) Kahler differentials:

\begin{lemma} \label{lem:preserve_smooth}
	Let $X$ be a classical stack and let $S$ be a classical affine scheme. Let $F : D(X) \to D(S)$ be a cocontinuous symmetric monoidal functor preserving connective objects and quasi-affine localizations. If $U \to X$ is a quasi-affine map with $\Omega^1_{U/X}$ locally free, then $\Omega^1_{F(U)^{cl}/S}$ is also locally free.
\end{lemma}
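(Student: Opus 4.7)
The plan is to transfer the question from classical Kähler differentials to the derived cotangent complex, exploit its functoriality under symmetric monoidal cocontinuous functors, and then read off the conclusion by taking $H^0$.

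First, I would set $\calA := f_\ast \calO_U \in \CAlg(D(X))$, the quasi-affine algebra corresponding to $U$. By the hypothesis that $F$ preserves quasi-affine localizations, the algebra $F(\calA)$ is of the form $\calO_V$ for some quasi-affine morphism $V := F(U) \to S$. Base change along $F$ then induces a cocontinuous symmetric monoidal functor
\[ F_U : D(U) \simeq D(X,\calA) \longrightarrow D(S,\calO_V) \simeq D(V) \]
which preserves connective objects (inherited from $F$) and is exact, being a colimit-preserving functor between stable $\infty$-categories.

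The key functoriality statement is $F_U(L_{U/X}) \simeq L_{V/S}$. This follows from the characterization of the cotangent complex via square-zero extensions, $\mathrm{Map}_\calA(L_{\calA/\calO_X}, M) \simeq \mathrm{Der}_{\calO_X}(\calA, M)$, together with the fact that square-zero extensions are constructed using only tensor products and colimits, both of which are preserved by the symmetric monoidal cocontinuous functor $F$; compare \cite[\S 7.3]{LurieHA}. Now $L_{U/X}$ is connective with $H^0(L_{U/X}) \simeq \Omega^1_{U/X}$. By hypothesis, $\Omega^1_{U/X}$ is finite locally free on $U$, and so dualizable in the symmetric monoidal $\infty$-category $D(U)^{cn}$. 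Since $F_U$ is symmetric monoidal and preserves connective objects, it preserves dualizable connective objects; in particular, $F_U(\Omega^1_{U/X})$ is a finite locally free $\calO_V$-module.

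Applying the exact functor $F_U$ to the fiber sequence $\tau^{\leq -1} L_{U/X} \to L_{U/X} \to \Omega^1_{U/X}$, and using that $F_U$ preserves $D^{\leq n}$ for every $n$ (being exact and connective-preserving), the resulting long exact cohomology sequence yields
\[ \Omega^1_{V^{cl}/S} \simeq H^0(L_{V/S}) \simeq H^0(F_U(L_{U/X})) \simeq H^0(F_U(\Omega^1_{U/X})). \]
The right-hand side is the restriction to $V^{cl}$ of a finite locally free $\calO_V$-module, hence a finite locally free $\calO_{V^{cl}}$-module, which proves the claim. The main obstacle is establishing $F_U(L_{U/X}) \simeq L_{V/S}$: it is the standard principle that the cotangent complex is preserved under symmetric monoidal cocontinuous functors, but it must be invoked with some care in the presentable symmetric monoidal framework at hand.
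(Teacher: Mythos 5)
Your proposal is correct and follows essentially the same route as the paper's proof: identify $V=F(U)\to S$ as quasi-affine via preservation of quasi-affine localizations, lift $F$ to a symmetric monoidal functor $D(U)\to D(F(U))$, invoke preservation of cotangent complexes under cocontinuous symmetric monoidal functors, use dualizability of $\Omega^1_{U/X}$, and pass to $\Omega^1_{F(U)^{cl}/S}$ via $H^0$ and the $2$-connectivity of $L_{F(U)^{cl}/F(U)}$. The one glossed point is the claim that $F_U$ preserves connective objects ``inherited from $F$'': this is not immediate since pushforward along the quasi-affine map $U\to X$ is not t-exact, and the paper supplies the missing one-line argument by writing every object of $D(U)^{cn}$ as the restriction of a connective $\calA$-module, where $\calA$ is a connective algebra with compact localization $\calO_U$, so that preservation of connectivity for $F_U$ follows from that of $F$.
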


\begin{proof}
	By hypothesis, $F(\calO_U) \in \CAlg(D(S))$ is a quasi-affine algebra corresponding to a quasi-affine map $F(U) \to S$, and under the equivalence $D(S,F(\calO_U)) \simeq D(F(U))$ we get a symmetric monoidal functor $F' : D(U) \to D(F(U))$ lifting $F$. If $\calA \to \calO_U$ is a compact localization of a connective algebra, $\calA$, then every object in $D(U)^{cn}$ is the restriction of an object of $D(X,\calA)^{cn}$. The same is true for $F(\calA) \to F(\calO_U)$, so $F'$ preserves connective objects as well. The pullback functor for the inclusion $i_{cl} : F(U)^{cl} \hookrightarrow F(U)$ preserves connective objects as well. Thus we have a well-defined symmetric monoidal functor $G := H_0 i_{cl}^\ast F' : \QCoh(U) \to \QCoh(F(U)^{cl})$; as any symmetric monoidal functor preserves dualizable objects, we conclude that $G$ preserves locally free sheaves. It remains to see that $G(\Omega^1_{U/X}) \simeq \Omega^1_{F(U)^{cl}/S}$. For this, by Lurie's cotangent complex formalism, note that $L_{U/X}$ corresponds to $L_{\calO_U/\calO_X}$ under the equivalence $D(U) \simeq D(X,\calO_U)$. Applying this to $F(U) \to S$ as well shows that $F'(L_{U/X}) \simeq L_{F(U)/X}$; here we use that $F$ preserves cotangent complexes. As $L_{F(U)^{cl}/F(U)}$ is 2-connective, we have a canonical isomorphism $H_0 i_{cl}^\ast L_{F(U)/S} \to H_0(L_{F(U)^{cl}/S})$; putting everything together gives $G(\Omega^1_{U/X}) \simeq \Omega^1_{F(U)^{cl}/S}$, as wanted.
\end{proof}

Finally, we can prove \autoref{thm:td_Noetherian}:

\begin{proof} [Proof of \autoref{thm:td_Noetherian}]

	By \autoref{prop:nilpotent_thickening} it suffices to prove the claim when $X$ is classical, and it suffices to show that for any classical $R$, any symmetric monoidal functor $F : D(X) \to D(\Spec(R))$ preserving connective objects and almost perfect objects is geometric. Given a smooth fppf cover by an affine scheme $U \to X$ of relative dimension $d$, \autoref{cor:preserve_quasi_affine} implies that $F(\calO_U) \simeq \calO_{F(U)}$ for some locally almost finitely presented quasi-affine map $F(U) \to \Spec(R)$. We will check that this map is a smooth cover in two steps.
	
	Assume first that the theorem has been proven when $R$ is a field. Then, for general $R$, we will check that $F(U)^{cl} \to \Spec(R)$ is a smooth cover; this is enough by \autoref{prop:covering}. To check this, we verify hypothesis (1) and (2) from \autoref{lem:SmoothMapCharacterization}. For (1), note that we already know that the (derived) fibres of $F(U) \to \Spec(R)$ are smooth and non-empty (by assumption), so the (classical) fibres of $F(U)^{cl} \to \Spec(R)$ are also smooth and non-empty. \autoref{lem:preserve_smooth} then shows that $\Omega^1_{F(U)^{cl}/\Spec(R)}$ is locally free, so (1) is satisfied. For (2), note that $\calO_{F(U)} \to \calO_{F(U)^{cl}}$ has a connected fibre in $D(F(U))$. Hence, for each field $k$ under $R$, the map $\calO_{F(U)} \otimes_R k \to \calO_{F(U)^{cl}} \otimes_R k$ also has a connected fibre. Since the (derived) fibres of $F(U) \to \Spec(R)$ are smooth, the term $\calO_{F(U)} \otimes_R k$ is in the heart of the $t$-structure, and hence the term $\calO_{F(U)^{cl}} \otimes_R k$ has a vanishing $H^{-1}$; this verifies hypothesis (2) from \autoref{lem:SmoothMapCharacterization}, and hence proves the theorem in this case.

	To finish proving the theorem, we may now assume that $R = K$ is a field.  Now any Noetherian stack with affine stabilizer groups, and in particular any quasi-geometric Noetherian stack, admits a finite stratification by reduced locally closed substacks $X_i$ such that for each $X_i$ there is a finite fppf morphism $Y_i \to X_i$ such that $Y_i$ is the quotient of a scheme which is quasiprojective over a Noetherian affine scheme by a linear action of $\GL_n$. This is \cite[Proposition 2.3.4]{drinfeld2013some} -- the result there is stated only for stacks defined over a field, but the proof applies verbatim for stacks over $\Z$ as well. Applying \autoref{lem:finite_etale} and \autoref{lem:global_quotient} inductively, it will therefore suffice to show that if $X$ is a quasi-geometric stack with an open substack $U \subset X$ such that both $U$ and $Z = X \setminus U$ are tannakian, then $X$ is tannakian. There are two cases:

\medskip
\noindent \textit{Case $F(\calO_Z)=0$:}
\medskip

In this case  $F(M) = 0$ for any complex of $\calO_Z$-modules (regarded as $\calO_X$-modules), because any such $M$ is a retract of $M\otimes_{\calO_X} \calO_Z$. The category $\Coh_Z(X)$ is generated by $\calO_Z$-modules, and $\QCoh_Z(X)$ is generated by $\Coh_Z(X)$, so $F(M) = 0$ for any $M \in D(X)$ with $H_i(M) \in \QCoh_Z(X)$ and $H_i(M) = 0$ for $i\ll 0$. In particular $F(R\Gamma_Z \calO_X) = 0$, so applying $F$ to the exact triangle $R\Gamma_Z \calO_X \to \calO_X \to \calO_U$ shows that $K \to F(\calO_U)$ is an isomorphism. This implies that $F$ factors uniquely through $D(X) \to D(U)$, and hence there is a map $\Spec(K) \to U \to X$ inducing $F$.

\medskip
\noindent \textit{Case $F(\calO_Z) \neq 0$:}
\medskip

Here $K = H_0 F(\calO_X) \to H_0 F(\calO_Z)$ is surjective, and $F(\calO_Z)$ is a ring, so $H_0 F(\calO_Z) \simeq K$. This defines a map of $K$-algebras $F(\calO_Z) \to K$. Consider the commutative square
$$\xymatrix{D(X) \ar[d]^F \ar[rr]^-{(\bullet)\otimes \calO_Z} & & D(X,\calO_Z) \ar[d]^{F_Z} & \\ D(\Spec(K)) \ar[rr]^-{(\bullet)\otimes F(\calO_Z)} & & D(\Spec(K),F(\calO_Z)) \ar[r] & D(\Spec(K)) }.$$
The composition of the lower horizontal maps is canonically isomorphic to the identity functor, so $F$ factors through the pullback map $D(X) \to D(Z)$. Therefore by hypothesis $F$ is induced by a morphism $\Spec(K) \to Z \to X$.

\end{proof}

\section{Strange pushouts of schemes}
\label{sec:strange_pushout}

Let $X$ be a smooth variety over a field with two smooth closed subvarieties $Z_0, Z_1 \subset X$ meeting transversally. Let $X_i = \op{Bl}_{Z_i} X$ and let $\tilde{Z}_i \subset X_{1-i}$ be the strict transform. We recall some facts from \cite{li2009wonderful}: We have a canonical identification $X_{01} := \op{Bl}_{\tilde{Z_0}} X_1 \simeq \op{Bl}_{\tilde{Z_1}} X_0$.
\begin{equation} \label{eqn:strange_pushout}
\xymatrix{ & & X_{01} \ar[dl] \ar[dr] & & \\ \tilde{Z_1} \ar@{^{(}->}[r] & X_0 \ar[dr]^{\pi_0} & & X_1 \ar[dl]_{\pi_1} & \tilde{Z}_0 \ar@{_{(}->}[l] \\ & Z_0 \ar@{^{(}->}[r] & X & Z_1 \ar@{_{(}->}[l] & }
\end{equation}
Also, $\tilde{Z}_i \simeq \op{Bl}_Z Z_i$, where $Z := Z_0 \cap Z_1$. We have two smooth divisors $D_0,D_1 \subset X_{01}$ which are the \emph{dominant transforms} of $Z_0$ and $Z_1$ \cite{li2009wonderful}. $D_0$ can be described equivalently as the strict transform in $X_{01}$ of the preimage of $Z_0$ under $X_0 \to X$, or as the preimage of $\tilde{Z}_0$ under $X_{01} \to X_1$, and $D_1$ has a symmetric description. The divisors $D_0$ and $D_1$ meet transversally.

\begin{proposition} \label{prop:strange_pushout}
The diagram \eqref{eqn:strange_pushout} is a pushout in the category of tannakian stacks.
\end{proposition}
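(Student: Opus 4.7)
The plan is to invoke \autoref{thm:td_Noetherian} to reformulate the pushout claim as a statement about derived $\infty$-categories. All four stacks in \eqref{eqn:strange_pushout} are smooth varieties, in particular Noetherian with affine diagonal, hence tannakian. For any tannakian stack $Y$ we have $\Map(W, Y) \simeq \Fun^L_\otimes(D(Y), D(W))'$ (primes denoting preservation of connective and almost perfect objects), so the pushout claim becomes equivalent to the natural symmetric monoidal functor
\[ \Phi \colon D(X) \to D(X_0) \times_{D(X_{01})} D(X_1) \]
being an equivalence of symmetric monoidal $\infty$-categories under which the subcategories of connective, respectively almost perfect, objects correspond on each side.

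For full faithfulness of $\Phi$, I would use the classical fact that for a smooth blowup $\pi \colon X' \to X$ of a smooth variety along a smooth center one has $R\pi_\ast \calO_{X'} \simeq \calO_X$. Combined with the projection formula, this gives $\Map_{X_i}(\pi_i^\ast K, \pi_i^\ast L) \simeq \Map_X(K, L)$ and $\Map_{X_{01}}(\pi_{01}^\ast K, \pi_{01}^\ast L) \simeq \Map_X(K, L)$ for perfect $K, L \in D(X)$. The fiber product of these $\Map$-spaces collapses to $\Map_X(K, L)$, giving full faithfulness on perfect complexes, and hence on all of $D(X)$ by passing to filtered colimits.

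For essential surjectivity, given a triple $(F_0, F_1, \alpha)$ with $F_i \in D(X_i)$ and $\alpha \colon q_0^\ast F_0 \xrightarrow{\sim} q_1^\ast F_1$ in $D(X_{01})$ (where $q_i \colon X_{01} \to X_i$ are the second-stage blowup maps), the natural candidate is $F := R\pi_{0\ast} F_0 \times^{R}_{R\pi_{01\ast} F_{01}} R\pi_{1\ast} F_1$ with $F_{01}$ the common pullback to $X_{01}$. To verify $\pi_i^\ast F \simeq F_i$, I would appeal to Orlov's semi-orthogonal decomposition for smooth blowups: $D(X_0)$ is generated by $\pi_0^\ast D(X)$ together with $c_0 - 1$ twisted copies of $D(Z_0)$, and analogously for $X_1$, while $D(X_{01})$ admits an iterated SOD whose pieces come from $D(X), D(Z_0), D(Z_1)$, and $D(Z) = D(Z_0 \cap Z_1)$. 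The gluing datum $\alpha$ then forces the $D(Z_0)$- and $D(Z_1)$-components of $F_0$ and $F_1$ to match across $X_{01}$, isolating a unique piece in the $\pi_{01}^\ast D(X)$-summand that produces the desired $F$.

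The main obstacle will be essential surjectivity, specifically the delicate bookkeeping required to match Orlov SOD components across $D(X_0), D(X_1)$ and $D(X_{01})$ using $\alpha$, and to track the various twists by exceptional divisors correctly. A cleaner alternative would be to establish blowup-descent for $D$ directly (in the spirit of \cite{bhatt2014algebraization}), using the proper surjective cover $X_0 \sqcup X_1 \to X$ to reduce essential surjectivity to a \v{C}ech-type computation. The preservation of connective and almost perfect objects then follows from these conditions being detectable after pullback along the proper surjection $X_0 \sqcup X_1 \to X$, combined with the $t$-exactness of $\pi_i^\ast$ and their preservation of $\APerf$.
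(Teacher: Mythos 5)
Your reduction is the same as the paper's: Tannaka duality (for an arbitrary tannakian target $Y$, whose maps out of each scheme are functors $D(Y)\to D(W)$) turns the pushout claim into the assertion that $D(X)\to D(X_0)\times_{D(X_{01})}D(X_1)$ is an equivalence matching the connective and almost perfect subcategories, and your full-faithfulness argument via $\pi_{i*}\calO\simeq\calO$ plus the projection formula is fine. The genuine gap is essential surjectivity, which is exactly where the mathematical content lies and which you leave as ``delicate bookkeeping.'' Since all the pullbacks to $X_{01}$ are fully faithful, the statement to prove is that $\Perf(X_0)\cap\Perf(X_1)=\Perf(X)$ inside $\Perf(X_{01})$ (no explicit construction of $F$ as a fiber product of pushforwards is needed), equivalently that the right orthogonal of $\Perf(X)$ is generated by the twisted copies of $\Perf(\tilde{Z}_0)$ and $\Perf(\tilde{Z}_1)$ coming from the dominant transforms. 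The missing idea is the Tor-independence of the strict-transform square (\autoref{lem:tor_independence}): because $Z_0'=\pi_0^{-1}(Z_0)$ and $\tilde{Z}_1$ meet transversally in $X_0$, the square formed by $D_0\to X_{01}$ over $Z_0'\to X_0$ is cartesian and Tor-independent, so the copy of $\Perf(Z_0)$ occurring in the Orlov decomposition of $\Perf(X_0)$, once pulled back to $X_{01}$, coincides with pullback-to-$D_0$-followed-by-pushforward and hence factors through $\Perf(\tilde{Z}_0)$. Without this base-change identification, the claim that ``$\alpha$ forces the $D(Z_0)$- and $D(Z_1)$-components to match'' is unsubstantiated, and it is precisely this point that makes the two iterated semiorthogonal decompositions of $\Perf(X_{01})$ comparable.

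Two further points would fail as written. First, the proposed ``cleaner alternative'' via \v{C}ech-type descent along the proper surjection $X_0\sqcup X_1\to X$ does not work: $D(-)$ does not satisfy descent for proper surjections, and the relevant \v{C}ech term is $X_0\times_X X_1$, which is strictly larger than $X_{01}$ (over a point of $Z_0\cap Z_1$ its fibre contains a product of the two exceptional $\P^{c}$'s, whereas the fibre of $X_{01}$ is a union of such), so the fibre product of categories over $D(X_{01})$ is not a \v{C}ech limit. Second, $\pi_i^\ast$ is not $t$-exact, only right $t$-exact: already $L\pi_0^\ast\calO_{Z_0}$ has nonvanishing $\Tor_1$ since the exceptional divisor has codimension one while $Z_0$ does not. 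Detection of connectivity and of almost perfection after pullback is true here, but not because of properness and surjectivity alone; it uses $F\simeq\pi_{0*}\pi_0^\ast F$ (finite cohomological dimension plus $\pi_{0*}\calO\simeq\calO$), conservativity of the classical pullback $H_0\pi_0^\ast$ on $\QCoh(X)$ for the connectivity statement, preservation of almost perfect complexes by the proper pushforward $\pi_{0*}$ for the $\APerf$ statement, and Nakayama for connectivity of almost perfect complexes, which is how the paper argues.
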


We collect some preliminary lemmas before proving this proposition.

\begin{lemma} \label{lem:tor_independence}
Let $X$ be a smooth variety and let $V, Z \subset X$ be smooth subvarieties meeting transversally. Let $\tilde{V} \subset \Bl_Z X$ be the strict transform of $V$. Then the diagram
$$\xymatrix{\tilde{V} \ar[r] \ar[d] & \Bl_Z X \ar[d]^\pi \\ V \ar[r] & X }$$
is a pullback diagram, and it is Tor independent.
\end{lemma}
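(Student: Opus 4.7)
The plan is to work Zariski locally on $X$, since $\pi \colon \Bl_Z X \to X$ is an isomorphism away from $Z$ and both claims are trivial off $Z$. It thus suffices to analyze a Zariski neighborhood of an arbitrary point $p \in V \cap Z$. By smoothness of $X$, $V$, and $Z$, together with transversality, I can find a regular system of parameters $x_1,\ldots,x_n$ at $p$ that extends to functions on a Zariski neighborhood with $Z = V(x_1,\ldots,x_r)$ and $V = V(x_{r+1},\ldots,x_{r+s})$. This coordinate choice is exactly the geometric content of transversality and is the starting point for both assertions.

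For the pullback identification, I will use the universal property of blowups: $V \times_X \Bl_Z X = \mathrm{Proj}_V \bigoplus_n (\mathcal{I}_Z \cdot \calO_V)^n$. In the above coordinates $\mathcal{I}_Z \cdot \calO_V$ is generated by the images of $x_1,\ldots,x_r$ in $\calO_V$, which form a regular sequence cutting out $V \cap Z$ in $V$. Moreover, since $(x_{r+1},\ldots,x_{r+s})$ acts as a regular sequence on each $\calO_X/\mathcal{I}_Z^n$ (being complementary parameters to the ones cutting out $Z$), the canonical surjection $\mathcal{I}_Z^n \otimes_{\calO_X} \calO_V \twoheadrightarrow \mathcal{I}_{V \cap Z, V}^n$ is an isomorphism for every $n$. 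Hence $V \times_X \Bl_Z X \simeq \Bl_{V\cap Z} V$, which is smooth and irreducible in a neighborhood of $p$ and agrees with $\tilde V$ on the dense open $V \setminus (V \cap Z)$; therefore the pullback equals the strict transform.

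For Tor-independence, I resolve $\calO_V = \calO_X/(x_{r+1},\ldots,x_{r+s})$ by the Koszul complex, which yields
\[ \calO_V \otimes^{\L}_{\calO_X} \calO_{\Bl_Z X} \;\simeq\; K(x_{r+1},\ldots,x_{r+s};\,\calO_{\Bl_Z X}). \]
On each standard affine chart $U_i$ ($i \leq r$) of $\Bl_Z X$, the coordinate ring is a polynomial ring in the variables $x_i$, $t_j = x_j/x_i$ for $j \leq r,\, j \neq i$, and $x_{r+1},\ldots,x_n$; in particular $x_{r+1},\ldots,x_{r+s}$ is part of a regular system of parameters on $U_i$ and hence a regular sequence. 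The displayed Koszul complex is therefore acyclic in positive degrees, which is precisely the Tor-vanishing statement. The main obstacle is really the pullback identification, where one must show that the scheme-theoretic pullback is the strict transform and not merely the total transform; this requires invoking transversality twice, once to identify $\mathcal{I}_Z \cdot \calO_V$ with the ideal of $V \cap Z$ in $V$, and once (via the Tor argument on $\calO_X/\mathcal{I}_Z^n$) to pass cleanly to $n$-th powers and invoke the universal property of blowups. Once these are in place, the Tor-independence step is essentially automatic from the explicit chart description of $\Bl_Z X$.
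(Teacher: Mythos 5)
Your proof is correct, but the way you identify the pullback with the strict transform differs from the paper's. The paper chooses local equations $f_1,\ldots,f_n$ for $V$ and applies the Jacobian criterion on $\Bl_Z X$: transversality forces the pulled-back equations to have surjective differential at every point over $Z$, so the scheme-theoretic preimage $\pi^{-1}(V)$ is smooth, agrees with $V$ off the exceptional divisor, and hence \emph{is} the strict transform; Tor-independence then follows, as in your argument, from the fact that these equations remain a regular sequence on $\calO_{\Bl_Z X}$. You instead compute the fiber product as $\mathrm{Proj}$ of the pulled-back Rees algebra and use the vanishing of $\Tor_1^{\calO_X}(\calO_X/\mathcal{I}_Z^n,\calO_V)$ (regularity of $x_{r+1},\ldots,x_{r+s}$ on $\calO_X/\mathcal{I}_Z^n$) to identify it with $\Bl_{V\cap Z}V$, hence with $\tilde V$. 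Your route makes the extra geometric fact $\tilde V \simeq \Bl_{V\cap Z}V$ explicit (which the paper quotes from the wonderful-compactification literature elsewhere), at the cost of the auxiliary Tor computation against powers of $\mathcal{I}_Z$; the paper's Jacobian argument is shorter and gets smoothness of the preimage in one stroke. Two small points of hygiene in your write-up: the chart ring of $\Bl_Z X$ is literally a polynomial ring only when $X=\mathbb{A}^n$ with the $x_i$ as global coordinates, so for general smooth $X$ you should either pass to \'etale coordinates (blowups commute with the flat base change $X \to \mathbb{A}^n$) or argue that $x_{r+1},\ldots,x_{r+s}$ stays regular on the chart because the chart is regular and the vanishing locus has codimension $s$; and the regularity of $x_{r+1},\ldots,x_{r+s}$ on $\calO_X/\mathcal{I}_Z^n$ deserves a one-line justification (the $\mathcal{I}_Z$-adic graded pieces are free $\calO_Z$-modules and the sequence restricts to a regular sequence on $Z$ by transversality). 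Neither affects the correctness of the argument.
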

\begin{proof}
The claim is local on $X$, so let us assume that $X$ is affine and $V$ is defined by equations $f_1,\ldots,f_n$ such that $[df_i]_p : T_p X \to k^n$ is surjective at each $p \in V$. The fact that $V$ meets $Z$ transversally means that $T_p Z + \ker [df_i]_p = T_p X$ for all $p \in Z \cap Y$.

The scheme theoretic preimage $\pi^{-1}(V)$ is defined by the pullback of these equations to $\Bl_Z X$, and if $q \in \Bl_Z X$ is a point with $\pi(q) = p \in V$, then the Jacobian matrix $[df_i|_{\Bl_Z X}]_q$ is the composition $T_q \Bl_Z X \to T_p X$ with $[df_i]_p$. Because $\pi$ is an isomorphism away from $Z$, the Jacobian has full rank at all points of $V \setminus Z$. For $q \in \Bl_Z X$ lying over $p \in Z$, the image of $T_q \Bl_Z X \to T_p X$ contains $T_p Z$, and it follows from the fact that $T_p Z + \ker[df_i]_p = T_p X$ that the Jacobian $[df_i|_{\Bl_Z X}]_q : T_q \Bl_Z X \to k^n$ is surjective.

We have shown that the preimage $\pi^{-1} V$ is smooth, and it agrees with $V$ in the complement of the exceptional divisor of $\Bl_Z X$. Therefore it must agree with the strict transform of $V$. We have actually shown more: the functions $f_1,\ldots,f_n$ which form a regular sequence for the structure sheaf $\calO_V$ are still a regular sequence for the structure sheaf of $\pi^{-1} (V)$. Therefore $L\pi^\ast \calO_V \simeq \calO_{\pi^{-1} (V)}$, and hence the square is tor independent.
\end{proof}

We will use the following fact from \cite{bondal2002derived}: If $Y \subset X$ is a smooth subvariety of codimension $c+2$ and $X$ is smooth, then we have a semiorthogonal decomposition
\begin{equation} \label{eqn:main_SOD}
\Perf(\op{Bl}_Y X) = \sod{\Perf(Y),\ldots,\Perf(Y)(c Y'),\Perf(X)}
\end{equation}
where $Y' \to Y$ is the preimage in $\Bl_Y X$ of $Y$. The fully faithful embedding $\Perf(X) \to \Perf(\Bl_Y X)$ is the pullback functor, and for each $n$ the embedding $\Perf(Y) \simeq \Perf(Y)(nY') \subset \Perf(\Bl_Y X)$ is given by the pullback followed by the pushforward along the inclusion, $\Perf(Y) \to \Perf(Y') \to \Perf(\Bl_Y X)$, followed by the tensor product with the invertible sheaf $\calO_{\Bl_Y X}(n Y')$.

\begin{lemma} \label{lem:perf_product}
The canonical functor of symmetric monoidal $\infty$-categories $\Perf(X) \to \Perf(X_0) \times_{\Perf(X_{01})} \Perf(X_1)$ is an equivalence.
\end{lemma}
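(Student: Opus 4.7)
The plan is to combine Orlov's blow-up formula (5.2) with the transverse base-change statement of Lemma~\ref{lem:tor_independence} to reduce the question to a clean orthogonality assertion between two subcategories of $\Perf(X_{01})$.

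For full faithfulness, fix $E, F \in \Perf(X)$. The mapping space on the right-hand side is the homotopy equalizer $\Map_{X_0}(\pi_0^* E, \pi_0^* F) \times_{\Map_{X_{01}}(\pi^* E, \pi^* F)} \Map_{X_1}(\pi_1^* E, \pi_1^* F)$, where $\pi := \pi_i \circ p_i$. Because each of $\pi_i$ and $\pi$ is a composition of blow-ups of smooth subvarieties in smooth varieties, $R\pi_{i*} \calO_{X_i} \simeq \calO_X$ and $R\pi_* \calO_{X_{01}} \simeq \calO_X$. Combined with adjunction, all three corners reduce to $\Map_X(E, F)$ with identity structure maps, and so the limit collapses to $\Map_X(E, F)$.

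For essential surjectivity, use (5.2) to write $\Perf(X_i) = \langle \mathcal{A}_i, \Perf(X) \rangle$, where $\mathcal{A}_i := \langle \Perf(Z_i)(k Z_i') \mid 0 \leq k \leq c_i \rangle$ is supported on the exceptional divisor $Z_i' \subset X_i$. Applying (5.2) again to the blow-up $X_{01} \to X_{1-i}$ of $\tilde Z_i$ with exceptional divisor $D_i$ gives $\Perf(X_{01}) = \langle \mathcal{C}_i, \Perf(X_{1-i}) \rangle$, where $\mathcal{C}_i := \langle \Perf(\tilde Z_i)(k D_i) \rangle$. Combining yields two parallel SODs $\Perf(X_{01}) = \langle \mathcal{C}_0, \mathcal{A}_1, \Perf(X) \rangle = \langle \mathcal{C}_1, \mathcal{A}_0, \Perf(X) \rangle$ with $\Perf(X)$ the rightmost piece of both; Lemma~\ref{lem:tor_independence}, applied to the transverse intersection of $\tilde Z_{1-i}$ and $Z_i'$ inside $X_i$, shows that $p_i^*$ identifies $\mathcal{A}_i \subset \Perf(X_i)$ with the middle piece of the appropriate SOD. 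Now given $(E_0, E_1, \phi)$ in the fiber product, set $E := R\pi_{0*} E_0 \in \Perf(X)$; the identity $R\pi_{i*} \circ R p_{i*} \simeq R\pi_*$ combined with $\phi$ forces $E \simeq R\pi_{1*} E_1$ as well. Letting $G_i := \fib(\pi_i^* E \to E_i) \in \mathcal{A}_i$, pulling back along $p_i^*$ and using $\phi$ produces an isomorphism $p_0^* G_0 \simeq p_1^* G_1$ inside $\mathcal{A}_0 \cap \mathcal{A}_1 \subset \Perf(X_{01})$. The whole problem thus reduces to the orthogonality claim $\mathcal{A}_0 \cap \mathcal{A}_1 = 0$.

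This final orthogonality is where I expect the main technical difficulty to lie. My plan for it is to exploit the two composite SODs simultaneously: an object $F \in \mathcal{A}_1$ is right-orthogonal to $\mathcal{C}_0$ inside $\mathcal{K} := \ker R\pi_* \subset \Perf(X_{01})$, while $\mathcal{A}_0 \subset \mathcal{K}$ sits in the complementary ordering relative to $\mathcal{C}_1$. Applying the pushforward $R p_{0*}$ (which satisfies $R p_{0*} p_0^* \simeq \id$ and kills $\mathcal{C}_1$) to the isomorphism $p_0^* G_0 \simeq p_1^* G_1$ yields $G_0 \simeq R p_{0*} p_1^* G_1$; since $G_1 \in \mathcal{A}_1$ is supported on $Z_1'$, Lemma~\ref{lem:tor_independence} implies $p_1^* G_1$ is supported on $D_1$ and hence its pushforward is supported on $p_0(D_1) = \tilde Z_1 \subset X_0$. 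So $G_0 \in \mathcal{A}_0$ is supported on $\tilde Z_1 \cap Z_0'$, and a symmetric analysis for $G_1$, together with stripping off the layers $\Perf(Z_i)(k Z_i')$ in the definition of $\mathcal{A}_i$, should force $G_0 = G_1 = 0$, with the transversality of $D_0$ and $D_1$ in $X_{01}$ playing the crucial role.
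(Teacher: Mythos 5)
Your reduction — full faithfulness via $R\pi_{i*}\calO_{X_i}\simeq\calO_X$ and adjunction, the two composite semiorthogonal decompositions $\Perf(X_{01})=\sod{\calC_0,\calA_1,\Perf(X)}=\sod{\calC_1,\calA_0,\Perf(X)}$, and the observation that essential surjectivity follows once the images of $\calA_0$ and $\calA_1$ in $\Perf(X_{01})$ intersect trivially — is sound and runs parallel to the paper's argument (which, working with full subcategories of $\Perf(X_{01})$, reduces directly to $\Perf(X)=\Perf(X_0)\cap\Perf(X_1)$ and thereby also avoids the coherence check that your chosen trivializations $E_i\simeq\pi_i^*E$ are compatible with $\phi$). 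The problem is that the one step you defer, $\calA_0\cap\calA_1=0$, is exactly where all the content of the lemma lives, and your sketch for it does not close. The support argument cannot suffice on its own: for any $A\in\Perf(Z_0)$ supported on $Z=Z_0\cap Z_1$, the object obtained by pulling $A$ back to $Z_0'$ and pushing forward to $X_0$ lies in $\calA_0$ and is supported on $Z_0'\cap\tilde{Z}_1$ (transversality identifies $Z_0'\cap\tilde{Z}_1$ with the part of $Z_0'$ over $Z$), so knowing that $G_0$ is supported on $Z_0'\cap\tilde{Z}_1$ and $G_1$ on $Z_1'\cap\tilde{Z}_0$ forces nothing, and the transversality of $D_0$ and $D_1$ by itself produces no orthogonality between blocks belonging to the two \emph{different} decompositions. ``Stripping off the layers'' would require comparing components taken with respect to the two decompositions, which is precisely what is not yet available.

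The missing ingredient is a base-change identity, not a support statement, and it is what the paper extracts from \autoref{lem:tor_independence}: the square formed by $D_i\to X_{01}$ and $Z_i'\to X_i$ is a Tor-independent pullback, so the embedding $\Perf(Z_i)\to\Perf(X_{01})$ (pullback to $Z_i'$, pushforward to $X_i$, pullback to $X_{01}$) coincides with pullback to $D_i$ followed by pushforward; since $D_i\to Z_i$ factors through $\tilde{Z}_i$, this shows the image of each block of $\calA_i$ is contained in the corresponding block of $\calC_i$, i.e.\ $\calA_i\subseteq\calC_i$ inside $\Perf(X_{01})$. Once you have this, your orthogonality is immediate: $\calC_1=\ker Rp_{0*}$ is right orthogonal to $p_0^*\Perf(X_0)$, so $\Map(\calA_0,\calA_1)\subseteq\Map(\Perf(X_0),\calC_1)=0$, and any object of $\calA_0\cap\calA_1$ has vanishing identity, hence is zero. (The paper packages the same input slightly differently, showing that the $\tilde{Z}_0$- and $\tilde{Z}_1$-blocks generate the right orthogonal of $\Perf(X)$ and then intersecting.) You cite \autoref{lem:tor_independence} only to deduce where $p_1^*G_1$ is supported, rather than for this categorical identification, so as written your proof has a genuine gap at its decisive step; inserting the containment $\calA_i\subseteq\calC_i$ repairs it and makes the support discussion unnecessary.
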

\begin{proof}
Let $Z_1' := \pi_1^{-1}(Z_1) \subset X_1$, then as $D_1$ is the strict transform of $Z_1'$, $\calO_{X_{01}}(D_1)$ is the preimage of $\calO_{X_1}(Z_1')$ under the projection $X_{01} \to X_1$. A similar description applies to $D_0$. Therefore if we apply the semiorthogonal decomposition \eqref{eqn:main_SOD} to the iterated blowups defining $X_{01}$ we get
\begin{equation} \label{eqn:semiorthogonal_decomposition}
\begin{array}{rl}
\Perf(X_{01}) &= \sod{\Perf(\tilde{Z}_1),\ldots,\Perf(\tilde{Z}_1)(c_1 D_1),\underbrace{\Perf(Z_0),\ldots,\Perf(Z_0)(c_0 D_0),\Perf(X)}_{\simeq \Perf(X_0)}} \\
&= \sod{\Perf(\tilde{Z}_0),\ldots,\Perf(\tilde{Z}_0)(c_0 D_0),\underbrace{\Perf(Z_1),\ldots,\Perf(Z_1)(c_1 D_1),\Perf(X)}_{\simeq \Perf(X_1)}}
\end{array}
\end{equation}
In this notation the twist $(\bullet) (n D_i)$ corresponds to tensoring a subcategory with the invertible sheaf $\calO_{X_{01}}(n D_i)$. The embedding $\Perf(Z_i) \subset \Perf(X_{01})$ denotes the composition
$$\Perf(Z_i) \xrightarrow{\text{pullback}} \Perf(Z_i') \xrightarrow{\text{pushforward}} \Perf(X_i) \xrightarrow{\text{pullback}} \Perf(X_{01})$$
And the embedding $\Perf(\tilde{Z}_i) \subset \Perf(X_{01})$ is the composition
$$\Perf(\tilde{Z}_i) \xrightarrow{\text{pullback}} \Perf(D_i) \xrightarrow{\text{pushforward}} \Perf(X_{01})$$

Because all of the pullback functors from $\Perf(X_0),\Perf(X_1),$ and $\Perf(X)$ to $\Perf(X_{01})$ are fully faithful, we can think of all three categories as subcategories of $\Perf(X_{01})$, and we must simply show that $\Perf(X) = \Perf(X_0) \cap \Perf(X_1)$. From the above semiorthogonal decompositions, this is equivalent to showing that the subcategories $\Perf(\tilde{Z}_1),\ldots,\Perf(\tilde{Z}_1)(c_1 D_1),\Perf(\tilde{Z}_0),\ldots,\Perf(\tilde{Z}_0)(c_0 D_0)$ generate the right orthogonal of $\Perf(X)$.

Note that $Z'_0$ and $\tilde{Z}_1$ intersect transversally in $X_0$. By \autoref{lem:tor_independence}, the square
$$\xymatrix{D_0 \ar[d] \ar[r] & X_{01} \ar[d] \\ Z_0' \ar[r] & X_0}$$
is a pullback square, and furthermore it is Tor independent. Therefore pushforward along the horizontal arrows commutes with pullback along the vertical arrows. Hence the embedding of categories $\Perf(Z_i) \to \Perf(X_{01})$ described above can actually be identified with the composition
$$\Perf(Z_i) \xrightarrow{\text{pullback}} \Perf(D_i) \xrightarrow{\text{pushforward}} \Perf(X_{01}).$$

The claim follows from this: we already know the right orthogonal of $\Perf(X)$ in $\Perf(X_{01})$ is generated by $\Perf(\tilde{Z}_1),\ldots,\Perf(\tilde{Z}_1)(c_1 D_1)$ and the subcategories $\Perf(Z_0),\ldots,\Perf(Z_0)(c_0 D_0)$. However, we have just shown that the embedding $\Perf(Z_0) \to \Perf(X)$ factors through the pullback functor $\Perf(Z_0) \to \Perf(\tilde{Z}_0)$, so the subcategories $\Perf(\tilde{Z}_0),\ldots \Perf(\tilde{Z}_0)(c_0 D_0)$ will also suffice to generate the right orthogonal of $\Perf(X)$.
\end{proof}

With a little more care, one can strengthen \autoref{lem:perf_product}.

\begin{corollary}
The conclusion of \autoref{lem:perf_product} is true with $\Perf$ replaced by $\APerf$, $D(\bullet)$, $\APerf^{cn}$, or $D(\bullet)^{cn}$.
\end{corollary}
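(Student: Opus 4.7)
The plan is to first extend \autoref{lem:perf_product} to $\calC = D(\bullet)$ by mimicking its semiorthogonal-decomposition argument directly on the unbounded derived categories, and then to deduce the statement for $\APerf(\bullet)$, $D(\bullet)^{cn}$, and $\APerf(\bullet)^{cn}$ by checking that each of these subcategories is detected pointwise after pullback to $X_0$ and $X_1$.

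For $\calC = D(\bullet)$: each of the fully faithful functors assembling the semiorthogonal decomposition \eqref{eqn:semiorthogonal_decomposition}---pullback along the projective bundles $Z_i' \to Z_i$, pushforward along the regularly embedded closed immersions $Z_i' \hookrightarrow X_i$ and $D_i \hookrightarrow X_{01}$, pullback along the blowups $X_i \to X$ and $X_{01} \to X_i$, and tensor with the line bundles $\calO_{X_{01}}(n D_i)$---extends to a continuous fully faithful embedding between the unbounded quasi-coherent derived categories, and each pairwise semi-orthogonality vanishing is verified by the same adjunction and base-change computation used in the $\Perf$ setting. Running the intersection argument of \autoref{lem:perf_product} verbatim then gives $D(X) = D(X_0) \cap D(X_1)$ inside $D(X_{01})$, and full faithfulness of the pullbacks $D(X) \hookrightarrow D(X_i) \hookrightarrow D(X_{01})$ collapses the fiber product to this intersection, yielding the equivalence.

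For the three remaining cases, it suffices to show that under the equivalence above an object $F \in D(X)$ belongs to $\calC(X)$ iff $\pi_i^\ast F \in \calC(X_i)$ for $i = 0,1$. The forward direction is automatic since $\pi_i^\ast$ preserves pseudo-coherence and connectivity. For the converse in the $\APerf$ case: $\pi_0$ is proper between Noetherian schemes, the projection formula combined with $R\pi_{0\ast} \calO_{X_0} \simeq \calO_X$ gives $R\pi_{0\ast} \pi_0^\ast F \simeq F$, and $R\pi_{0\ast}$ preserves pseudo-coherence, so $\pi_0^\ast F \in \APerf(X_0)$ forces $F \in \APerf(X)$. For the converse in $D(\bullet)^{cn}$: from $\pi_0^\ast F \in D^{\leq 0}(X_0)$ together with the bound on the cohomological dimension of the proper morphism $\pi_0$, we see that $F \simeq R\pi_{0\ast} \pi_0^\ast F$ is bounded above; letting $b$ be its top cohomology degree, we have $H^b(L\pi_0^\ast F) \simeq \pi_0^\ast H^b(F)$ (the top cohomology is unaffected by the right $t$-exact derived pullback), and this is nonzero because $\pi_0$ is surjective and the underived pullback $\pi_0^\ast$ is faithful on nonzero coherent sheaves (checked at stalks via Nakayama), forcing $b \leq 0$. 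The case $\APerf(\bullet)^{cn}$ is the combination of the previous two.

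The main obstacle is the extension of the blowup semiorthogonal decomposition (originally of Bondal-Orlov for bounded coherent derived categories) to the large category $D(\bullet)$: one must verify that the embeddings remain fully faithful and that the pairwise Hom-vanishings persist without boundedness hypotheses. This is essentially formal given that all the geometric morphisms involved---smooth pullbacks, regular closed immersions, blowups of smooth centers in smooth varieties, and line bundle twists---have finite Tor-dimension, but setting up the verification carefully is the main bookkeeping step.
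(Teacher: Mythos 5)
For $D(\bullet)$, $\APerf$, and $\APerf^{cn}$ your route is essentially the paper's: the semiorthogonal decomposition extends to the unbounded categories (the paper gets this formally from the stacks being perfect, you re-verify the embeddings and orthogonality by hand, which is fine), the $\APerf$ case uses $F \simeq \pi_{0\ast}\pi_0^\ast F$ plus preservation of almost perfect complexes under proper pushforward, and in the $\APerf^{cn}$ case the relevant cohomology sheaves are coherent, so a Nakayama-at-fibers argument applies.

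The genuine gap is in the $D(\bullet)^{cn}$ case. You reduce to showing $\pi_0^\ast H^b(F) \neq 0$ for the top nonvanishing cohomology sheaf of $F$, and justify this by faithfulness of the underived pullback ``on nonzero coherent sheaves (checked at stalks via Nakayama).'' But for an arbitrary $F \in D(X)$ the sheaf $H^b(F)$ is only quasi-coherent, and the fiberwise Nakayama argument fails for non-finitely-generated sheaves: for instance $k(t)/k[t]$ is a nonzero quasi-coherent sheaf on $\A^1_k$ whose fiber at every point vanishes, so surjectivity of $\pi_0$ plus fiber-checking proves nothing here. Nor can you pass to a nonzero coherent subsheaf $N \subset H^b(F)$, since $\pi_0^\ast N \to \pi_0^\ast H^b(F)$ need not be injective. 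What is actually needed is conservativity of $H^0\pi_0^\ast$ on all of $\QCoh(X)$, and this requires an extra argument; the paper supplies it by writing $M = \varinjlim F_\alpha$ as a filtered colimit of coherent sheaves, so that $H^0\pi_0^\ast M = \varinjlim H^0\pi_0^\ast F_\alpha$, and then noting that vanishing of the colimit forces some transition map of coherent sheaves to be killed by $H^0\pi_0^\ast$, which for a morphism of coherent sheaves implies the morphism was already zero. As written, your argument for $D(\bullet)^{cn}$ only goes through when the top cohomology sheaf is coherent, i.e.\ in the $\APerf^{cn}$ case you had already handled; to close the general case you need this conservativity statement (or an equivalent) for quasi-coherent sheaves.
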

\begin{proof}
\textit{The case of $D(\bullet)$:}
\medskip

$X_0,X_1,$ and $X_{01}$ are perfect stacks, so for purely formal reasons the semiorthogonal decomposition of \eqref{eqn:semiorthogonal_decomposition} extends to the category $D(X_{01})$, replacing $\Perf$ with $D(\bullet)$ everywhere. The same argument as above shows that $D(X) = D(X_0) \cap D(X_1)$, the intersection being taken in $D(X_{01})$, and thus the canonical map $D(X) \to D(X_0)\times_{D(X_{01})} D(X_1)$ is an equivalence.

\medskip
\noindent \textit{The case of $\APerf$ and $\APerf^{cn}$:}
\medskip

An object in $\APerf(X_0) \times_{\APerf(X_{01})} \APerf(X_1)$ is the pullback of a uniquely determined object of $D(X)$. However, if $F \in D(X)$ and $\pi^\ast_0 F \in \APerf(X_0)$, then $F \in \APerf(X)$ because $F \simeq (\pi_0)_\ast \pi_0^\ast F$ and $(\pi_0)_\ast$ preserves almost perfect complexes. The claim for connective almost perfect objects follows from the fact that an almost perfect object is connective if and only if its restriction to every closed point is connective, by Nakayama's lemma. Thus if $\pi_0^\ast F$ is connective and $F$ is almost perfect, then $F$ is connective.

\medskip
\noindent \textit{The case of $D(X)^{cn}$:}
\medskip

We claim that for any $F \in D(X)$, $F$ is connective if and only if $\pi_0^\ast F$ is connective. Note that because $\pi_0$ has finite cohomological dimension and $F \simeq (\pi_0)_\ast \pi_0^\ast F$, we have that $F$ is eventually connective and thus has a lowest (in homological degree) nonvanishing homology sheaf. It will suffice to show that the pullback functor $H_0 (\pi_0)^\ast : \QCoh(X) \to \QCoh(X_0)$ is conservative, so that the lowest nonvanishing homology sheaf of $\pi^\ast F$ will occur in the same degree as the lowest nonvanishing homology sheaf of $F$.

To show that the classical pullback is conservative, let $M \in \QCoh(X)$, and write $M = \varinjlim F_\alpha$ with $F_\alpha \in \Coh(X)$. Then $H_0 \pi_0^\ast M = \varinjlim H_0 \pi_0^\ast F_\alpha$ is zero if and only if for all $\alpha$, there is a $\beta$ larger than $\alpha$ such that the map $H_0 \pi_0^\ast F_\alpha \to H_0 \pi_0^\ast F_\beta$ is $0$. But for coherent sheaves, if $H_0 \pi_0^\ast$ of a morphism vanishes, then that morphism must have already been $0$.
\end{proof}

\begin{proof}[Proof of \autoref{prop:strange_pushout}]
According to \autoref{defn:tannakian}, we can identify $\Map(X,Y)$ with the full subcategory of $\Fun^L_\otimes(D(Y)^{cn},D(X)^{cn})$ preserving almost perfect complexes. By the previous corollary we can identify this with \[\Fun^L_\otimes(D(Y)^{cn},D(X_1)^{cn}) \times_{\Fun^L_\otimes(D(Y)^{cn},D(X_{01})^{cn})} \Fun^L_\otimes(D(Y)^{cn},D(X_0)^{cn}), \]
and a under this identification a functor preserves almost perfect complexes if and only if its restriction to $D(X_i)^{cn}$ preserves almost perfect complexes, for $i=0,1,01$.
\end{proof}

\section{Mapping stacks}
\label{sect:mapping_stacks}

One of the original applications of Lurie's tannakian formalism in \cite{DAGXIV} was to the algebraicity of mapping stack functor $\inner{\Map}_S(X,Y)$ when $X \to S$ is a flat, strongly proper, locally finitely presented map of spectral Deligne-Mumford stacks and $Y \to S$ is a locally finitely presented geometric $S$-stack. In \cite{halpern2014mapping}, the second author and A. Preygel develop Lurie's method further to show that $\inner{\Map}_S(X,Y)$ is algebraic for a much larger class of morphisms $X \to S$ which are flat and ``cohomologically proper.'' One key step is establishing that the mapping stack functor is integrable, meaning for any discrete Noetherian ring $R$ over $S$ which is complete with respect to an ideal $I \subset R$, the canonical map
\begin{equation}
\Map_S(X \times_S \Spec(R),Y) \to \Map_S(X \times_S \op{Spf}(R),Y)
\end{equation}
is an equivalence, where $\op{Spf}(R) = \colim \Spec(R/I^n)$ is the formal completion along $I$.

Using Lurie's tannaka duality theorem, one can prove that the mapping stack is integrable provided $Y$ is locally finitely presented over $S$ and has an affine diagonal. \autoref{thm:td_Noetherian} allows one to extend this result to $Y$ which have quasi-affine diagonal, and significantly simplifies the proof of integrability. In addition, this version of the tannaka duality theorem does not require any noetherian or finite presentation hypotheses on $X$.

\begin{corollary}
Let $X \to S \leftarrow Y$ be stacks with $Y$ finitely presented and algebraic over $S$ with quasi-affine diagonal, and let $R$ be a complete Noetherian algebra over $S$. If $X$ is such that $\APerf(X \times_S \Spec(R))^{cn} \to \APerf(X \times_S \op{Spf}(R))^{cn}$ is an equivalence, then
\[
\Map_S(X \times_S \Spec(R), Y) \to \Map_S(X \times_S \op{Spf}(R),Y)
\]
is an equivalence.
\end{corollary}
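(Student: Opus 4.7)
The plan is to reduce the statement to a tautology via the Noetherian Tannaka duality theorem (\autoref{thm:td_Noetherian}) in its small-category form (\autoref{lem:small_cat_Noetherian}). Writing $R_n := R/I^n$, $X_R := X \times_S \Spec(R)$, $X_n := X \times_S \Spec(R_n)$, and similarly $Y_R$, $Y_n$ for base changes of $Y$, the first reduction is to rewrite both sides in purely $R$-linear or $R_n$-linear terms:
\[
\Map_S(X_R, Y) \simeq \Map_R(X_R, Y_R), \qquad \Map_S(X_{\hat R}, Y) \simeq \lim_n \Map_{R_n}(X_n, Y_n).
\]
Since $Y$ is finitely presented over $S$ and $R$ is Noetherian, each $Y_R$ and $Y_n$ is a Noetherian algebraic stack with quasi-affine diagonal.

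Next, we apply \autoref{lem:small_cat_Noetherian} to each factor. For any Noetherian $R$-algebra $A$ and any prestack $T$ over $\Spec(A)$, there is an equivalence $\Map_A(T, Y_A) \simeq \Func_{\otimes/\APerf(A)^{cn}}(\APerf(Y_A)^{cn}, \APerf(T)^{cn})$, where the subscript denotes symmetric monoidal functors linear over $\APerf(A)^{cn}$ (obtained as a fiber over the structure map to $\Spec(A)$). Applied term by term, this yields
\[
\Map_{R_n}(X_n, Y_n) \simeq \Func_{\otimes/R_n}\bigl(\APerf(Y_n)^{cn},\; \APerf(X_n)^{cn}\bigr)
\]
and analogously for $(X_R, Y_R)$ over $R$.

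The key step is now to move the limit over $n$ inside the functor category on both variables. For the target, we invoke the hypothesis directly: $\APerf(X_R)^{cn} \simeq \APerf(X \times_S \op{Spf}(R))^{cn} \simeq \lim_n \APerf(X_n)^{cn}$. For the source, we use that $\APerf^{cn}$ satisfies base change along $R \to R_n$ for finitely presented morphisms, so $\APerf(Y_n)^{cn} \simeq \APerf(Y_R)^{cn} \otimes_{\APerf(R)^{cn}} \APerf(R_n)^{cn}$, giving the identification $\Func_{\otimes/R_n}(\APerf(Y_n)^{cn}, \calC) \simeq \Func_{\otimes/R}(\APerf(Y_R)^{cn}, \calC)$ for any $R_n$-linear $\calC$. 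Assembling these and using that $\Func_\otimes(-, -)$ sends limits in the second variable to limits gives
\[
\lim_n \Func_{\otimes/R_n}\bigl(\APerf(Y_n)^{cn}, \APerf(X_n)^{cn}\bigr) \simeq \Func_{\otimes/R}\bigl(\APerf(Y_R)^{cn}, \APerf(X_R)^{cn}\bigr),
\]
which under Tannaka duality is exactly $\Map_R(X_R, Y_R) \simeq \Map_S(X_R, Y)$, as desired.

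The main obstacle I anticipate is bookkeeping rather than conceptual: one must keep track of the $R$-linear (resp.\ $R_n$-linear) structure carefully when passing between $\Map$ and $\Func_\otimes$, and justify base-change compatibility of $\APerf^{cn}$ along the truncation maps $R \to R_n$ for the finitely presented stack $Y_R$. Both points should be routine consequences of properties of almost perfect complexes on locally Noetherian stacks recorded in \cite{halpern2014mapping} and \cite{DAGXII}, but they are the only nontrivial inputs beyond Tannaka duality and the integrability hypothesis on $X$.
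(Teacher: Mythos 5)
Your overall strategy---rewrite both mapping spaces via \autoref{thm:td_Noetherian} and \autoref{lem:small_cat_Noetherian} and then feed in the hypothesis on $\APerf^{cn}$---is the same as the paper's, but you take a detour the paper does not need, and the detour contains the one step that is not actually justified. You decompose $\Map_S(X \times_S \op{Spf}(R),Y) \simeq \lim_n \Map_{R_n}(X_n,Y_n)$, apply Tannaka duality level by level with the varying targets $Y_n$, and then reassemble by asserting $\APerf(Y_n)^{cn} \simeq \APerf(Y_R)^{cn} \otimes_{\APerf(R)^{cn}} \APerf(R_n)^{cn}$, so that each level-$n$ functor category can be rewritten with the fixed source $\APerf(Y_R)^{cn}$. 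That base-change statement, as a relative tensor product of small symmetric monoidal $\infty$-categories with finite colimits, is nowhere proved in this paper and is not ``routine'': statements of this shape for $\APerf^{cn}$ are substantive results (of the kind established in \cite{halpern2014mapping} under properness-type hypotheses), and on top of it you need the coherence bookkeeping making the level-wise identifications into an equivalence of limit diagrams. As written, your proof stands or falls with an input you have only waved at.

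None of this is necessary, because \autoref{thm:td_Noetherian} allows the test object to be an \emph{arbitrary prestack} while the Noetherian hypotheses sit only on the target. This is exactly how the paper argues: base change everything to $S = \Spec(R)$, note $Y_R$ is Noetherian with quasi-affine diagonal, and apply Tannaka duality with the single fixed target $Y_R$ and the two test prestacks $X_R$ and $\widehat{X} = X \times_S \op{Spf}(R)$. After \autoref{lem:small_cat_Noetherian}, both sides become $R$-linear functor categories out of the same source $\APerf(Y_R)^{cn}$, only the target ($\APerf(X_R)^{cn}$ versus $\APerf(\widehat{X})^{cn}$) changes, and the hypothesis of the corollary is literally the statement that this change is an equivalence. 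If you prefer to keep your level-wise decomposition, you can still avoid the tensor-product claim: $\Map_{R_n}(X_n,Y_n) \simeq \Map_R(X_n,Y_R)$ by the universal property of the fiber product, so you may apply Tannaka with fixed target $Y_R$ and test object $X_n$ from the start; your remaining steps (commuting the limit into the target of $\Func_\otimes$ and identifying $\lim_n \APerf(X_n)^{cn} \simeq \APerf(X \times_S \op{Spf}(R))^{cn} \simeq \APerf(X_R)^{cn}$) then go through as you wrote them.
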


\begin{proof}
It by base change it suffices to consider the case $S = \Spec(R)$, so by the tannaka duality of \autoref{thm:td_Noetherian}, we must show that
\[
\Fun_{\otimes,R}^L(D(Y),D(X))' \to \Fun_{\otimes,R}^L(D(Y),D(\widehat{X}))'
\]
is an equivalence, where $\widehat{X} := \op{Spf}(R) \times_S X$ is the derived formal completion, and $\Fun_{\otimes,R}^L(\bullet,\bullet)'$ denotes the $\infty$-category of $R$-linear symmetric monoidal $\infty$-functors which preserve connective and almost perfect complexes. By \autoref{lem:small_cat_Noetherian} it suffices to show that
\[
\Fun_{\otimes,R}^L(\APerf(Y)^{cn},\APerf(X)^{cn}) \to \Fun_{\otimes,R}^L(\APerf(Y)^{cn},\APerf(\widehat{X})^{cn})
\]
is an equivalence. Which follows immediately from the hypotheses.

\end{proof}

\begin{remark}
When $X \times_S \Spec(R)$ is Noetherian, then $\APerf(X) \to \APerf(\widehat{X})$ is an equivalence if and only if $\APerf(\widehat{X})^{cn}$ is an equivalence.
\end{remark}

\begin{remark}
The conclusion of the previous corollary remains valid if instead we assume that $Y$ is only locally finitely presented over $S$ and that $X \to S$ is algebraic and qcqs. In this case, quasi-compactness of $X$ forces any pair of maps $X \to Y$ and $X_0 = X \times_S \Spec(R/I) \to Y$ to factor through some quasi-compact open substack $U \subset Y$, and any map $\widehat{X} \to Y$ factors through this same $U \subset Y$. It thus suffices to show that $\Map_S(X \times_S \Spec(R),Y) \to \Map_S(X \times_S \op{Spf}(R),Y)$ is an equivalence on the connected components corresponding to maps which factor through $U \subset Y$ for every quasi-compact open substack, which reduces to the previous corollary.
\end{remark}

\subsection{A non-derived integrability result}
\label{ss:NonDerived}

In this short section, we record a direct argument for integrability of the mapping stack when the source is noetherian and the target has a mild separation condition. The argument we give is tannakian, but does not require the full strength of the tannaka duality theorem.

For the rest of this section, fix a noetherian ring $R$ which is $I$-adically complete for some ideal $I$. For any $R$-stack $Z$, write $Z_n$ for its reduction modulo $I^n$. Then we have:

\begin{theorem}
Let $X \stackrel{f}{\to} \Spec(R) \stackrel{g}{\gets} Y$ be finitely presented maps of stacks. Assume that $f$ is proper, and that $g$ has an affine diagonal. Then the canonical map gives an equivalence $$\Map_R(X,Y) \simeq \lim \Map_R(X_n,Y).$$
\end{theorem}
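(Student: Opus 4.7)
The plan is to combine Lurie's classical Tannaka duality \autoref{thm:LurieClassicalTD} with Grothendieck's formal existence theorem for proper stacks, avoiding the derived machinery entirely. Since $g$ is finitely presented with affine diagonal, \autoref{thm:LurieClassicalTD} identifies, for any $R$-stack $T$, the space $\Map_R(T,Y)$ with the space of cocontinuous $R$-linear symmetric monoidal functors $\QCoh(Y) \to \QCoh(T)$ that preserve flat objects and monomorphisms with flat cokernels. The problem thus reduces to the following: given a compatible system $\{F_n : \QCoh(Y) \to \QCoh(X_n)\}$ of such functors (arising from the system $\{X_n \to Y\}$), produce a unique $F : \QCoh(Y) \to \QCoh(X)$ of the same type whose restriction along each closed immersion $X_n \hookrightarrow X$ recovers $F_n$.

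Since $Y$ is noetherian, $\QCoh(Y) \simeq \Ind(\Coh(Y))$, so it suffices to construct $F$ on coherent sheaves and then Ind-extend. For $M \in \Coh(Y)$, the pullbacks $F_n(M)\in \Coh(X_n)$ assemble into an object of $\lim_n \Coh(X_n) \simeq \Coh(\widehat{X})$, where $\widehat{X}$ denotes the formal completion of $X$ along $I\calO_X$. Because $f$ is proper and $R$ is $I$-adically complete noetherian, Grothendieck's formal existence theorem, in its version for proper Artin stacks (due to Olsson and Conrad), gives an equivalence $\Coh(X) \simeq \Coh(\widehat{X})$. The coherent system $\{F_n(M)\}$ therefore algebraizes uniquely to a coherent sheaf $F(M)$ on $X$. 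Symmetric monoidality and exactness of $M \mapsto F(M)$ can be checked after restriction to each $X_n$, where they hold because each $F_n$ is a pullback; hence Ind-extension yields a cocontinuous $R$-linear symmetric monoidal functor $F : \QCoh(Y) \to \QCoh(X)$.

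It remains to verify that $F$ preserves flat objects and monomorphisms with flat cokernels. For a coherent sheaf $M$ on $Y$ that is $R$-flat, the local criterion of flatness reduces the $R$-flatness of $F(M)$ to the $R/I^n$-flatness of its pullback to $X_n$, which is $F_n(M|_{Y_n})$; flatness of the latter is immediate since $F_n$ is a pullback functor and pullbacks preserve flatness. A filtered colimit argument extends this from $\Coh(Y)$ to $\QCoh(Y)$, and the same strategy handles monomorphisms with flat cokernels, both properties being characterized fpqc-locally, hence detectable on the closed fibers $X_n$. Feeding the resulting $F$ back into \autoref{thm:LurieClassicalTD} produces the desired map $X \to Y$ over $R$; uniqueness and compatibility with the $X_n \to Y$ follow from the corresponding statements in Lurie's theorem.

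The main obstacle is the formal existence theorem itself in the stack setting — this is where properness of $f$ (rather than the weaker cohomological properness used elsewhere in the paper) enters essentially, and its invocation is what permits the proof to bypass the derived version of Tannaka duality. The affine diagonal assumption on $g$ is what makes the classical (non-derived) tannakian formalism sufficient, and in particular lets us avoid any discussion of connective or almost perfect complexes.
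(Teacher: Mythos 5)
Your construction of $F$ on $\Coh(Y)$ via formal GAGA for proper stacks and Ind-extension is exactly the paper's first step, but the final step — feeding $F$ into Lurie's classical Tannaka duality \autoref{thm:LurieClassicalTD} and verifying its essential-image criterion — contains a genuine gap. The criterion there is preservation of \emph{flat objects of $\QCoh(Y)$} (flatness over the structure sheaf) and of monomorphisms with flat cokernels, not $R$-flatness, so the property you verify is not the one the theorem requires. More seriously, your proposed verification "on the closed fibers $X_n$" does not work: the closed immersions $X_n \hookrightarrow X$ are not flat, so properties characterized fpqc-locally are not detectable on them, and flatness of a general (non-coherent) quasi-coherent sheaf is simply not visible on the $I$-adic truncations — e.g.\ $M = \Q_p/\Z_p$ over $\Z_p$ with $I=(p)$ has $M/p^nM = 0$ flat for all $n$, yet $M$ is not flat. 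The "filtered colimit argument" does not rescue this, because a flat quasi-coherent sheaf on a noetherian stack need not be a filtered colimit of flat (or locally free) coherent subsheaves — its coherent subsheaves are typically not flat — and the monomorphism/flat-cokernel condition, which requires $F$ to preserve certain kernels, is not addressed at all. This is precisely the difficulty the paper emphasizes in the introduction: checking preservation of flatness in Lurie's criterion is the hard part, and it is what this whole section is designed to avoid.

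The paper's proof takes a different final step that bypasses flatness entirely. Having built $F : \QCoh(Y) \to \QCoh(X)$ (right-exact, symmetric monoidal, preserving coherent, hence finitely presented, sheaves), it applies $F$ to the algebra $\calA = g_*\calO_U$ of a smooth affine cover $U \to Y$; $F(\calA)$ is a finitely presented quasi-coherent algebra defining an affine, finitely presented morphism $V \to X$. Since each $V_n \to X_n$ is a smooth cover, Grothendieck's theorem on formal smoothness shows $V \to X$ is smooth at all points lying over $X_0$; the smooth locus is open, its image is an open substack of $X$ containing $X_0$, and properness of $X$ over the $I$-adically complete ring $R$ forces this open to be all of $X$. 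Smooth descent then produces the map $X \to Y$, with full faithfulness (uniqueness) the only input needed from the tannakian formalism. If you want to keep your route through \autoref{thm:LurieClassicalTD}, you would need an honest argument that $F$ preserves arbitrary flat quasi-coherent sheaves and monomorphisms with flat cokernels, and no such argument is currently in your proposal.
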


Recall that compact objects in $\QCoh(Y)$ are exactly coherent sheaves, and that $\QCoh(Y)$ is generated under colimits by $\Coh(Y)$. We must check that a compatible system $\{f_n:X_n \to Y\}$ of maps algebraizes to a map $f:X \to Y$. Let $\{F_n:\QCoh(Y) \to \QCoh(X_n)\}$ be the corresponding system of pullback functors. We obtain an induced system $\{g_n:\Coh(Y) \to \Coh(X_n)\}$ by restricting to compact objects. As $\Coh(Y) \simeq \lim \Coh(X_n)$, there is an induced right-exact symmetric monoidal functor $G:\Coh(Y) \to \Coh(X)$. Passing to ind-completions, we obtain a cocontinuous symmetric monoidal functor $F:\QCoh(Y) \to \QCoh(X)$ preserving coherent sheaves.  The above theorem then follows from:

\begin{proposition}
The functor $F$ is geometric, i.e., $F = f^*$ for some map $f:X \to Y$ of $R$-stacks.
\end{proposition}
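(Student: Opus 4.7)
The plan is to invoke Lurie's classical Tannaka duality (\autoref{thm:LurieClassicalTD}): because $Y$ has affine diagonal, it suffices to check that the colimit-preserving symmetric monoidal functor $F$ preserves flat quasi-coherent sheaves as well as monomorphisms with flat cokernels. The resulting geometric map $f\colon X \to Y$ will automatically restrict to $f_n$ on each $X_n$ by the uniqueness in Tannaka duality combined with the basic compatibility $F(\calF) \otimes_{\calO_X} \calO_{X_n} \simeq f_n^{\ast}(\calF)$ for every $\calF \in \QCoh(Y)$ (immediate for coherent $\calF$ by construction of $G$, and extended to all $\calF$ by cocontinuity of both $F$ and the restriction).

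The first, easy, step is that $F$ takes vector bundles on $Y$ to vector bundles on $X$: these are exactly the dualizable objects of $\QCoh$ in the Noetherian setting, and any symmetric monoidal functor preserves dualizable objects. In particular, every flat coherent sheaf on $Y$ (which is a vector bundle by Noetherianness) is carried to a vector bundle, hence a flat sheaf, on $X$. To extend to an arbitrary flat $\calF \in \QCoh(Y)$, I would argue stalk-locally on $X$: at each closed point $x \in X$ --- which necessarily lies in $X_1$ --- the local ring $\calO_{X,x}$ is Noetherian, the ideal $I \calO_{X,x}$ sits inside the maximal ideal, and the quotients $F(\calF)_x / I^n$ are flat over $\calO_{X,x}/I^n$; combined with $I$-adic separation coming from Krull's intersection theorem applied to the coherent approximations of $F(\calF)$, the local flatness criterion (EGA $0_{\text{III}}.10.2.2$) yields flatness of $F(\calF)_x$. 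To conclude flatness at the remaining points of $X$ (those lying over the generic fiber of $X \to \Spec R$), note that the flat locus of $F(\calF)$ is open in $X$ and contains $X_1$; by the $I$-adic completeness of $R$, which forces $I \subset \op{Jac}(R)$, combined with properness of $X$ over $R$, any closed subset of $X$ disjoint from $X_1$ has closed image in $\Spec R$ disjoint from $V(I)$ and hence empty, so the flat locus exhausts all of $X$.

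The main obstacle is that the local criterion of flatness is most cleanly formulated for finitely generated modules, while $F(\calF)_x$ is in general only a filtered colimit of coherent stalks $G(\calF_i)_x$ with the $\calF_i$ coherent but not necessarily flat. The most direct way around this is to argue that $Y$ has enough vector bundles, so that every flat $\calF \in \QCoh(Y)$ is a filtered colimit of vector bundles $V_i$; then $F(\calF) = \colim F(V_i)$ is manifestly a filtered colimit of vector bundles on $X$ and hence flat, reducing the stalkwise analysis above to the trivial coherent-flat case. Preservation of monomorphisms with flat cokernels is handled analogously: reduce to the case of a short exact sequence whose cokernel is a vector bundle via a filtered colimit argument, then use the compatibility $F(-) \otimes_{\calO_X} \calO_{X_n} \simeq f_n^{\ast}(-)$ to lift exactness from each $X_n$ (where it follows from $f_n^{\ast}$ being exact on sequences with flat cokernel) to $X$ using the same completeness-and-properness argument. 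With these verifications in hand, \autoref{thm:LurieClassicalTD} produces the desired map $f$.
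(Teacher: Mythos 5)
There is a genuine gap. Your whole strategy rests on reducing flatness-preservation to preservation of dualizable objects via the claim that ``$Y$ has enough vector bundles,'' so that every flat quasi-coherent sheaf on $Y$ is a filtered colimit of vector bundles. But $Y$ is only assumed to be a finitely presented algebraic stack over $R$ with affine diagonal; the resolution property is a serious extra hypothesis that is not known (and in general not expected) to hold in this generality --- this is exactly the setting of Sch\"appi's work cited in the introduction, which \emph{assumes} enough vector bundles precisely because it does not come for free. Without that input, your fallback stalkwise argument also does not close the gap: $F(\calF)$ is only a filtered colimit of coherent sheaves $G(\calF_i)$ with $\calF_i$ not flat, so its stalks are not finitely generated, the local criterion of flatness (and likewise openness of the flat locus, which you invoke to spread flatness from $X_1$ to $X$) requires finiteness hypotheses that are not available, and the claimed $I$-adic separation of these stalks is not justified. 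The treatment of monomorphisms with flat cokernels inherits the same problem. So the verification of the flatness condition in \autoref{thm:LurieClassicalTD} --- which is the entire content of the proposition, and which the paper explicitly flags as the hard-to-check condition --- is not actually carried out.

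For comparison, the paper's proof avoids flatness altogether and does not pass through the full Tannaka duality theorem. Since $Y$ has affine diagonal, a smooth cover $g:U \to Y$ by an affine scheme is an affine morphism, and one applies $F$ to the finitely presented algebra $\calA = g_*\calO_U$ to obtain a finitely presented affine morphism $p:V \to X$ (using that $F$ preserves coherence, hence finitely presented algebras). By smooth descent it suffices to show $p$ is a smooth cover. The reductions $p_n:V_n \to X_n$ are the smooth covers pulled back along the given maps $f_n$, so $p$ is formally smooth at all points over $X_0$; Grothendieck's theorem on formal smoothness upgrades this to smoothness there, and since the image of the smooth locus is open, $X$ is proper over the $I$-adically complete ring $R$, and the only open substack containing $X_0$ is $X$ itself, $p$ is a smooth cover. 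If you want to salvage your approach, you would either need to impose the resolution property on $Y$ or find a different verification of flatness preservation; the paper's geometric argument is the way to avoid that issue entirely.
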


\begin{proof}
Let $g:U \to Y$ be a smooth cover by an affine scheme $U$, and let $\calA := g_* \calO_U \in \QCoh(Y)$ be the corresponding commutative algebra. This algebra is finitely presented as $g$ is so. Let $p:V \to X$ be the affine morphism defined by the algebra $F(\calA) \in \QCoh(X)$. As $F$ preserves finitely presented sheaves, one easily checks that $F$ also preserves finitely presented algebras, so $p$ is finitely presented. By smooth descent, it is enough to show that the smooth locus $V^\circ \subset V$ of $p$ covers $X$, i.e., that $V^\circ \to X$ surjective. Note that $V^\circ \to X$ is a smooth map, so its image is an open subset of $X$. As $X$ is proper over a complete ring $R$, the only open set containing $X_0$ is all of $X$, so it is enough to show that $p:V \to X$ is smooth at all points over $X_0$. By assumption, we know that the induced map $p_n:V_n \to X_n$ is a smooth cover, so $V \to X$ is formally smooth at all points lying over $X_0$. Grothendieck's theorem on formal smoothness then implies that $V \to X$ is formally smooth at all points of $V$ lying above $X_0$, as desired.
\end{proof}

\section{Algebraization of formal points}

\begin{theorem}
	Let $A$ be a commutative ring that is $I$-adically complete for some ideal $I$. Let $X$ be either a noetherian quasi-geometric algebraic stack, or a classical quasi-compact separated tame Deligne-Mumford stack. Then the natural map gives an equivalence
	\[ X(A) \simeq \lim X(A/I^n).\]
\end{theorem}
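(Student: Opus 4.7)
The plan is to apply the Tannaka duality theorems established earlier in the paper to translate the algebraization question into a continuity statement for $\APerf^{cn}$ (or $\Perf$) on affines, and then to prove that continuity statement using derived $I$-completeness together with a Postnikov-style deformation argument.

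First, consider the Noetherian quasi-geometric case. Combining \autoref{thm:td_Noetherian} with \autoref{lem:small_cat_Noetherian}, for any classical ring $B$ one has a natural equivalence
\[
X(B) \simeq \Func_\otimes(\APerf(X)^{cn}, \APerf(\Spec B)^{cn}).
\]
(One first reduces the general prestack statement to the affine case since both sides are compatible with limits in $B$, and then notes that both $\APerf(X)^{cn}$ and the small category of functors are determined by their restriction to classical $B$.) Applying this with $B = A$ and $B = A/I^n$ and taking limits, the theorem reduces to the assertion
\[
\APerf(\Spec A)^{cn} \simeq \lim_n \APerf(\Spec A/I^n)^{cn}.
\]
For the tame Deligne-Mumford case, \autoref{cor:tame_DM} identifies $X(B) \simeq \Fun^L_\otimes(D(X)^c, \Perf(B))$, and the theorem similarly reduces to $\Perf(\Spec A) \simeq \lim_n \Perf(\Spec A/I^n)$.

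Both reductions are instances of a general continuity property along the tower $\{A/I^n\}$, whose transition maps are surjections with square-zero-after-filtration kernels. Since we may assume $I$ is finitely generated, $A$ is derived $I$-complete, and one shows that every (connective) almost perfect $A$-module is automatically derived $I$-complete as well; this yields full faithfulness formally via $A \simeq \lim A/I^n$ and the limit description of mapping spaces. For essential surjectivity, given a compatible system $\{M_n\}$ with $M_{n+1} \otimes_{A/I^{n+1}} A/I^n \simeq M_n$, one defines $M := \lim M_n$ and argues inductively on the Postnikov tower that $M \in \APerf(A)^{cn}$ (respectively $\Perf(A)$) with $M \otimes_A A/I^n \simeq M_n$: the classical case reduces to reconstructing a finitely presented $\pi_0$-module over the complete ring $A$ from its reductions modulo powers of $I$ (a standard fact), and the inductive step lifts each homotopy group through the square-zero extensions $A/I^{n+1} \to A/I^n$ using deformation theory in the spirit of \autoref{lem:DefThyFormallyEtale}. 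In the $\Perf$ case one must additionally observe that the uniform bound on Tor-amplitude inherited from the compatibility isomorphisms is preserved in the limit, which follows from Nakayama.

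The main obstacle is the essential surjectivity step, particularly for non-Noetherian $A$: one must carefully justify that the inverse limit of a compatible tower of almost perfect connective complexes remains almost perfect after completion, and that its reductions modulo $I^n$ recover the original system. This requires combining the derived $I$-completeness of $A$ with a careful Nakayama-type lifting along the filtration of the nilpotent ideals, as well as the convergence of the Postnikov tower of each $M_n$ on taking the inverse limit. Once these technical points are settled, both cases of the theorem follow uniformly from the Tannaka duality machinery already developed.
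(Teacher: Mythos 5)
Your reduction is exactly the paper's: pass to the underlying classical situation, use \autoref{thm:td_Noetherian} together with \autoref{lem:small_cat_Noetherian} (resp.\ \autoref{cor:tame_DM}) to rewrite $X(A)$ and $X(A/I^n)$ in terms of $\APerf(-)^{cn}$ (resp.\ $\Perf(-)$), and thereby reduce to the continuity statements $\APerf(A)^{cn} \simeq \lim \APerf(A/I^n)^{cn}$ and $\Perf(A) \simeq \lim \Perf(A/I^n)$. The problem is that this continuity statement is the actual mathematical content of the theorem, and your treatment of it has a genuine gap at essential surjectivity --- one you flag yourself but do not close. The mechanism you propose, induction on Postnikov towers combined with deformation theory ``in the spirit of \autoref{lem:DefThyFormallyEtale},'' cannot do the job: square-zero (or nilpotent) lifting only moves between the finite stages $A/I^{n+1} \to A/I^n$, i.e.\ it reconstructs data you were already given as part of the compatible system $\{M_n\}$. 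The crux is crossing from the tower to $A$ itself: $A \to A/I^n$ is not a nilpotent extension, and $A = \lim A/I^n$ is an inverse limit rather than a finite composite of square-zero extensions, so no induction of this shape shows that $M := \lim M_n$ is almost perfect over $A$ with $M \otimes_A A/I^n \simeq M_n$. One needs an actual algebraization argument --- e.g.\ building compatible resolutions by finite free modules over the $A/I^n$ and lifting them to a bounded-above complex of finite projective $A$-modules, using that such modules are $I$-adically complete. This is precisely what the paper delegates to \autoref{lem:PerfAPerfContinuity}, which in turn quotes \cite[Lemma 4.2]{bhatt2014algebraization} for $\Perf$ and \cite[Tag 09AV]{StacksProject} for essential surjectivity in the $\APerf$ case, adding only a short direct proof of full faithfulness.

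Two further points. First, your step ``we may assume $I$ is finitely generated'' is unjustified: the theorem allows an arbitrary ideal, and your full-faithfulness argument via derived $I$-completeness really uses finite generation (both for ``$I$-adically complete $\Rightarrow$ derived complete'' and for identifying the relevant limit with derived completion). The paper avoids this entirely: for $K \in \APerf(A)$ one represents $K$ by a bounded-above complex of finite projective modules, each of which is $I$-adically complete since $A$ is, whence $K \simeq \lim K \otimes_A A/I^n$ and full faithfulness follows by computing mapping spaces as limits --- no finiteness hypothesis on $I$ and no completeness formalism needed. Second, in the Noetherian case you still owe the connectivity statement needed to pass from $\APerf$ to $\APerf^{cn}$ in the limit: the paper proves that $K \in \APerf(A)$ is connective as soon as $K \otimes_A A/I$ is, using $K \simeq \lim K \otimes_A A/I^n$ and the surjectivity of the transition maps on $H^0$; your appeal to ``Nakayama-type lifting'' gestures at this but is not an argument. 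In summary, the Tannakian reduction is fine and matches the paper, but the continuity lemma must either be proved along the lines just indicated or cited, as the paper does.
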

\begin{proof}
We first note that the hypothesis on $X$ is stable under passage to the underlying classical stack. Moreover, since both $X(A)$ and $X(A/I^n)$ depend only on the underlying classical stack (as $A$ is discrete), we may assume $A$ is classical.

Assume now that $X$ is noetherian with quasi-affine diagonal. Then, by \autoref{thm:td_Noetherian} and \autoref{lem:small_cat_Noetherian}, we have $X(A) \simeq \Fun^L_{\otimes}(\APerf(X)^{cn},\APerf(A)^{cn})$, and similarly for $A/I^n$-points. Hence, it suffices to show that $\APerf(A)^{cn} \simeq \lim \APerf(A/I^n)^{cn}$ in the $\infty$-category of symmetric monoidal $\infty$-categories with finitely cocontinuous functors. This follows from \autoref{lem:PerfAPerfContinuity} together with the following observation: $K \in \APerf(A)$ is connective if and only if $K \otimes_A A/I \in \APerf(A/I)$ is connective. For the latter statement, one direction clear; conversely, if $K \otimes_A A/I$ is connective, so is $K \otimes_A A/I^n$ for all $n$. Since $K = \lim K \otimes_A A/I^n$, it suffices to note that the projective system $\{H^0(K \otimes_A A/I^n)\}$ has surjective transition maps.

Assume now that $X$ is a classical quasi-compact separated tame Deligne-Mumford stack. Then, by \autoref{cor:tame_DM}, we have $X(A) \simeq \Func_{\otimes}(\Perf(X), \Perf(A))$. It thus remains to observe that $\Perf(A) \simeq \lim \Perf(A/I^n)$ in the $\infty$-category of stable symmetric monoidal $\infty$-categories with finitely cocontinuous functors. This is again covered by \autoref{lem:PerfAPerfContinuity}
\end{proof}

\begin{lemma}
\label{lem:PerfAPerfContinuity}
Let $A$ be a commutative ring that is $I$-adically complete for some ideal $I$. Then $\Perf(A) \simeq \lim \Perf(A/I^n)$, and $\APerf(A) \simeq \lim \APerf(A/I^n)$.
\end{lemma}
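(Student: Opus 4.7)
The plan is to prove the statement for $\APerf$; the analogous statement for $\Perf$ then follows because the full subcategory $\Perf(A) \subset \APerf(A)$ is cut out by dualizability (equivalently, perfectness), and an almost perfect $A$-module is perfect if and only if $M \otimes_A A/I$ is perfect over $A/I$, by Nakayama together with the classical $I$-adic completeness of $A$. The relevant comparison functor is $\Phi : \APerf(A) \to \lim_n \APerf(A/I^n)$ sending $M$ to the tower $(M \otimes_A A/I^n)$, with the structure maps induced by the canonical quotients.

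For full faithfulness, the key point is to show that every $N \in \APerf(A)$ is derived $I$-complete, i.e. $N \xrightarrow{\sim} \lim_n N \otimes_A A/I^n$ in $D(A)$: base change then identifies $\lim_n \Map_{A/I^n}(M \otimes_A A/I^n, N \otimes_A A/I^n)$ with $\Map_A(M, \lim_n N \otimes_A A/I^n) \simeq \Map_A(M,N)$. The subcategory of $D(A)$ consisting of derived complete modules is stable, thick, and closed under arbitrary limits. It contains $A$ by hypothesis (classical $I$-adic completeness of the discrete ring $A$ supplies the required equivalence $A \simeq \lim_n A/I^n$), hence contains $\Perf(A)$. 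For a general $N \in \APerf(A)$, reduce to the connective case by a shift, write $N \simeq \lim_k \tau^{\leq k} N$, and observe that each $\tau^{\leq k} N$ is a finite extension of shifts of $\pi_0(A)$-finitely presented discrete modules, which are derived complete over the classically complete $A$; the result then follows since limits preserve derived completeness.

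For essential surjectivity, given a compatible tower $(M_n) \in \lim_n \APerf(A/I^n)$ with $M_n \otimes_{A/I^n} A/I^{n-1} \simeq M_{n-1}$, set $M := \lim_n M_n \in D(A)$. Two things must be checked: $(a)$ the comparison $M \otimes_A A/I^m \to M_m$ is an equivalence for each $m$, and $(b)$ $M \in \APerf(A)$. For $(a)$, iterating the compatibility equivalences yields $M_n \otimes_{A/I^n} A/I^m \simeq M_m$ whenever $n \geq m$, so the tower $(M_n \otimes_A A/I^m)_{n \geq m}$ is essentially constant, equal to $M_m$; applying $(-)\otimes_A A/I^m$ to the fiber sequence computing $M$ and tracking the Mittag–Leffler behaviour of the homotopy groups of the cofibre $I^m \otimes_A M_n$ (one Postnikov stage at a time) identifies $M \otimes_A A/I^m$ with $M_m$. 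For $(b)$, the Postnikov truncations $\tau^{\leq k} M_n$ assemble into compatible towers whose limits are $\tau^{\leq k} M$ by $(a)$, so each $\tau^{\leq k} M$ has $\pi_0(A)$-finitely presented homotopy (as it does modulo every $I^n$, and $A$ is complete); hence $M = \lim_k \tau^{\leq k} M$ is almost perfect.

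The main obstacle is the interchange of base change with the cofiltered limit in step $(a)$: $(-)\otimes_A A/I^m$ does not commute with arbitrary limits in $D(A)$, so one must exploit the very specific Mittag–Leffler structure of the tower $(M_n)$ supplied by the structure equivalences, and handle one Postnikov truncation at a time. Once $(a)$ is established the remainder of the argument is formal: the symmetric monoidal enhancement is automatic since $\Phi$ is base change, and the corresponding result for $\Perf$ follows by restricting to the dualizable objects on each side.
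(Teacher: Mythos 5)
There is a genuine gap, and it is concentrated exactly where the lemma's generality matters: $A$ is an arbitrary commutative ring and $I$ an arbitrary ideal (no Noetherian, coherence, or finite-generation hypotheses), and the lemma is applied in precisely that generality. In your full-faithfulness step you slide between ``derived $I$-complete'' (the class that is stable, thick, and closed under arbitrary limits) and the property you actually use, namely $N \simeq \lim_n N \otimes_A A/I^n$. The latter class is \emph{not} obviously closed under limits, since $- \otimes_A A/I^n$ does not commute with limits (your Postnikov-tower instance can be salvaged by a connectivity estimate, but you do not give it). Worse, the base case of your induction fails in this generality: over a non-coherent ring the homotopy groups of an almost perfect complex need not be finitely presented (truncations of pseudo-coherent complexes are not pseudo-coherent), and even for a finitely presented discrete module $M$ over a classically $I$-adically complete ring the equivalence $M \simeq \lim_n M \otimes_A^L A/I^n$ is not automatic --- one needs the towers $\Tor_i^A(M,A/I^n)$ to die in the limit, which is an Artin--Rees statement requiring Noetherian input (even classical completeness of finite modules can fail here). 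The paper's own proof of full faithfulness sidesteps all of this: represent $K \in \APerf(A)$ by a bounded-above complex of finite projective modules, note that $K \otimes_A A/I^n$ is computed by the termwise quotient, and take the inverse limit termwise, using only that finite projective modules over an $I$-adically complete ring are complete and that the transition maps are surjective (so there is no $\lim^1$).

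Your essential-surjectivity step has the same problem in sharper form. Setting $M = \lim_n M_n$, the identification $M \otimes_A A/I^m \simeq M_m$ requires commuting $- \otimes_A A/I^m$ past the inverse limit over $n$, equivalently $I^m \otimes_A^L \lim_n M_n \simeq \lim_n I^m \otimes_A^L M_n$; this is the non-formal crux, and ``tracking the Mittag--Leffler behaviour of $\pi_i(I^m \otimes_A M_n)$'' is an assertion, not an argument --- no such ML property is available a priori. Likewise your step (b) (``finitely presented homotopy, as it does modulo every $I^n$, and $A$ is complete'') again invokes a completeness/finiteness transfer that needs Noetherian hypotheses. The paper does not reprove this step: it quotes the Stacks Project (Tag 09AV) for essential surjectivity of $\APerf$ and cites the earlier algebraization paper for the $\Perf$ statement; the quoted proofs work by inductively lifting finite free presentations through the tower, which is the mechanism your sketch gestures at but does not supply. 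Finally, your deduction of the $\Perf$ case from the $\APerf$ case (pseudo-coherent plus perfect modulo $I$, with $I$ in the Jacobson radical, implies perfect) is a reasonable alternative route, but it too is a nontrivial lemma that should be proved or cited rather than attributed to ``Nakayama together with completeness.''
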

\begin{proof}
The statement for $\Perf(-)$ can be found in \cite[Lemma 4.2]{bhatt2014algebraization}. For $\APerf(-)$, once the full faithfulness of $\APerf(A) \to \lim \APerf(A/I^n)$ has been established, the essential surjectivity comes from \cite[Tag 09AV]{StacksProject}. For full faithfulness, fix some $K \in \APerf(A)$.  We claim that $K \simeq \lim K \otimes_A A/I^n$ via the natural map. To see this, we can represent $K$ by an explicit bounded above complex $K^\bullet$ whose terms are finite projective. Then $K \otimes_A A/I^n$ is computed by $K^\bullet/I^n$, so the desired claim follows from the $I$-adic completeness of $A$. Using this, one obtains that if $K,L \in \APerf(A)$, then 
\[ \Map_A(K,L) \simeq \lim \Map_A(K, L \otimes_A A/I^n) \simeq \lim \Map_{A/I^n}(K \otimes_A A/I^n, L \otimes_A A/I^n),\]
which proves the full faithfulness.
\end{proof}

\begin{remark}
The same method can be used to prove a closely related result, which will be explained and applied in \cite{halpern2014on}: Let $\Theta := \mathbb{A}^1 /\mathbb{G}_m$ over some base Noetherian ring $k$ and $\mathbb{G}_m$ acts with weight $1$, and let $\Theta_n := \op{Spec}(k[x] / (x^n)) / \mathbb{G}_m$. Then the restriction map
$$\Map_k(\Theta,X) \to \lim_n \Map_k(\Theta_n,X)$$
is an equivalence of $\infty$-categories for any locally Noetherian quasi-geometric $k$-stack $X$.
\end{remark}

\bibliographystyle{alpha}
\bibliography{td-stacks}

\end{document}